\newtheorem{theorem}{Theorem}
\newtheorem{proposition}{Proposition}
\newtheorem{lemma}{Lemma}
\newtheorem{corollary}{Corollary}
\newcommand\Z{{\mathbb Z}}
\newcommand\Pp{{\mathbb P}}
\newcommand\Gm{{\mathbb G}_m}
\newcommand\ord{{\rm ord}}
\newcommand\C{{\mathbb C}}
\newcommand\OS{{\cal O}_S}
\newcommand\A{{\mathbb A}} 
\newcommand\cc{{\cal C}}
\newtheorem*{analogo}{Theorem}
\title{Integral points, divisibility between values of polynomials and entire curves on surfaces}
\author{Pietro Corvaja \&  Umberto Zannier}
\date{}
\begin{document}

\maketitle

\begin{abstract}
 We prove some new degeneracy results for integral points and entire curves on surfaces; in particular, we provide the first examples, to our knowledge, of a simply connected smooth variety whose sets of integral points are never Zariski-dense (and no entire curve has Zariski-dense image).
Some of our results are connected with divisibility problems, i.e. the problem of describing the integral points in the plane where the values of some given polynomials in two variables divide the values of other given polynomials.
\end{abstract}

\section{Introduction}
 
The purpose of this paper is to continue the investigation initiated in \cite{cz1} on integral points on surfaces, concentrating now on rational quasi projective surfaces. We shall in particular connect the distribution of integral points on such surfaces to divisibility problems: namely, given a set  of pairs of polynomials $(f_i(X,Y),g_i(X,Y))$ in two variables, we  shall study the set of integral points $(x,y)\in\A^2$ such that   $f_i(x,y)$ divides the value $g_i(x,y)$ of $g_i$. 
\smallskip

In the one-variable situation,  Siegel's finiteness theorem for integral points in the case of rational curves  can be restated by saying that: {\it if  $f(X),g(X)\not\equiv 0$ are coprime polynomials with coefficients in a ring of $S$-integers, such that for infinitely many $S$-integers $x$, $f(x)$ divides $g(x)$ in the ring of $S$-integers, then the polynomial $f(X)$ has at most one (complex) root.} 

One of the objects of the present work is to (partially) extend in dimension two this particular case of  Siegel's result. In this context, we mention the $S$-unit theorem in three variables, which can be reformulated as follows: {\it the set of pairs of $S$-integers $(x,y)$ such that $x,y$ and $x+y-1$ divide $1$ in the ring of $S$-integers is not Zariski-dense on the plane}. Our Theorem \ref{divisione} provides  a generalization of the $S$-unit  theorem to the case of more general polynomials. A very special case appeared already in \cite{cz2}, where we considered values of linear forms in three variables
dividing values of quadratic forms at $S$-unit  points.

As mentioned, our results can also be stated in terms of distribution of points on algebraic surfaces. In some cases we explicitly state a degeneracy result for integral points on  certain rational  surfaces, including for instance cubic hypersurfaces in $\Pp_3$ (see Theorem \ref{1}).
All our results constitute particular cases of the famous Vojta's conjecture, asserting: {\it let $\tilde{X}$ be a smooth projective variety, $D$ a hypersurface with  normal crossing singularities and $K$ a canonical  divisor for $\tilde{X}$. If $D+K$ is big, then no set of $S$-integal points on $\tilde{X}\setminus D$ is Zariski-dense.}

 As usual in this theory, after the fundamental work of Vojta \cite{vojta}, analogous results are expected for entire curves on surfaces: namely, on such a class of quasi projective varieties, no entire curve should be Zariski-dense. In some cases we shall also obtain such corresponding degeneration results for entire curves.
\medskip

The principal tool in this work will be the Main Theorem of   \cite{cz1}, which in turn is based on the Subspace Theorem of Schmidt and Schlickewei. As a consequence, our results will be ineffective; neverthless in some cases, where the set of integral points (resp. an entire curve) is infinite (resp. non-constant) but not Zariski-dense,  we shall find out the possible infinite families of integral points (resp. images of entire curves) lying on the given surface.

\medskip

Let us now state formally our main results. We follow the standard notation concerning $S$-integral points, as in \cite{cz1}, \cite{cz2} or \cite{bg}: for a number field $k$ and a finite set of places $S$, containing all the archimedean ones, let $\OS$ denote the corresponding  ring of $S$-integers, $\OS^*$ its group of $S$-units. Given a  projective variety $\tilde{X}$, embedded in a projective space $\Pp_N$, and a hypersurface $D\subset\tilde{X}$, both defined over $k$, let $X=\tilde{X}\setminus D$ be the complement of $D$ in $\tilde{X}$. We say that a rational point $P\in X(k)$ is $S$-integral if for no valuation $\nu$ of $k$ outside $S$, the reduction $P_\nu$ of $P$ modulo $\nu$ lies in the reduction of $D$ (we shall later define formally the precise notion of reduction of a sub-variety). Whenever $D$ is the intersection of $\tilde{X}$ with the hyperplane at infinity, so $X$ can be identified with a closed subvariety of $\A^N$, this definition coincides with the usual one.
\medskip

Some of our results are stated as degeneracy for integral points on surfaces, others as finiteness or degeneracy of the set of points satisfying some divisibility conditions. While introducing them, we shall present their   logical dependence; this will also be summarised at the end of this introduction.  
 
Our first result concerns cubic hypersurfaces of $\Pp_3$:

\begin{theorem}\label{1} 
 Let $\tilde{X}\subset \Pp_3 $ be a smooth cubic surface defined over $k$; let $H_1,H_2$ be hyperplane sections such that the curve $H_1\cup H_2$ consists of six lines. Then the $S$-integral points on the affine surface $X:=\tilde{X}\setminus (H_1\cup H_2)$ are not Zariski-dense. 
\end{theorem}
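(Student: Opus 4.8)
The plan is to deduce the theorem from the Main Theorem of \cite{cz1}, once the geometry of the boundary divisor $D:=H_1\cup H_2$ has been made explicit. First I would observe that a hyperplane section of a smooth cubic surface is a plane cubic curve, and that such a cubic splits into lines exactly when it is a union of three coplanar lines, i.e.\ when the hyperplane is a tritangent plane; hence the hypothesis forces each $H_i$ to consist of three lines meeting pairwise (a ``triangle''), and $D$ is a configuration of six lines $L_1,L_2,L_3$ (from $H_1$) and $M_1,M_2,M_3$ (from $H_2$), all distinct because $D$ has six components. I would then record the intersection combinatorics on $\tilde X$: every line has self-intersection $-1$, and each $H_i$ is an anticanonical (hyperplane) class $-K$, so that $D\sim -2K$ and, crucially, $D+K\sim -K$ is ample---consistent with Vojta's conjecture. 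From $H_1\cdot H_2=(-K)^2=K^2=3$ one sees that the two triangles meet in three further points, so $D$ has the dual graph of two triangles joined by three edges, with nine nodes in the generic case.

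The core of the argument is to feed this configuration into the Subspace-Theorem machinery of \cite{cz1}. Taking the ample class $A=-K$, I would filter the spaces $H^0(\tilde X,NA)$ by order of vanishing along the components of $D$, working locally where the reductions of the integral points accumulate: at the smooth points of each line and, above all, at the nodes of $D$ where two lines meet. Since $A\cdot L=1$ and $L^2=-1$ for every line $L$, Riemann--Roch gives $h^0(NA-jL)\approx \tfrac12(3N^2-2Nj-j^2)$, whence a single-line vanishing mass $\sum_{j\ge1}h^0(NA-jL)\sim \tfrac{5}{6}N^3$ against $\dim H^0(NA)\sim\tfrac32N^2$; this yields a per-line ``$\beta$-constant'' of $\tfrac59<1$, confirming that no single line suffices but suggesting that the six components together, exploited through the two-dimensional filtrations at the nodes, overshoot the threshold demanded by \cite{cz1}.

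The main obstacle I anticipate is exactly this numerical verification at the nodes. At a point where two lines $L,M$ of $D$ meet one has $L\cdot M=1$, and the relevant resource is the joint filtration of $H^0(NA)$ by $H^0(NA-aL-bM)$, whose self-intersections $(NA-aL-bM)^2=3N^2-2N(a+b)-(a-b)^2$ must be integrated over the admissible region; the difficulty is that the two flags interact, so the masses cannot simply be added, and the inequality of \cite{cz1} has to be checked honestly against the genuine two-variable count. I would also treat the degenerate configurations separately---Eckardt points where three of the six lines become concurrent, or the case in which the two triangles share a vertex---since these modify the local analysis. Finally, the method should do more than yield non-density: any line $\ell\not\subset D$ of the cubic meets $D\sim -2K$ in $\ell\cdot(-2K)=2$ points, so $\ell\setminus D$ is an affine curve of $\Gm$-type that may carry infinitely many integral points, which both locates the expected exceptional families and shows that the degeneracy conclusion is best possible.
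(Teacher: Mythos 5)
Your geometric setup is sound and is in substance the configuration the paper itself exploits: the six lines form two coplanar triangles, every component $D_i$ of $D$ has $D_i^2=-1$, each line of one triangle meets exactly one line of the other (forced by $H_1\cdot H_2=3$), so that $D\cdot D_i=2$ for every $i$ and $D^2=12$. But your argument stops exactly where the proof has to begin. The Main Theorem of \cite{cz1} (or Theorem 2.1 of \cite{cz2}) is a black box with \emph{purely numerical} hypotheses: no three components of $D$ share a point, $D$ ample (or nef with $D^2>0$), and for each $i$ the inequality $2\xi_i D^2>(D\cdot D_i)\xi_i^2+3p_iD^2$, where $\xi_i$ is the smallest positive root of $(D-\xi D_i)^2=0$. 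You never state, let alone verify, this inequality; instead you drift into re-deriving the internal filtration machinery of \cite{cz1} (the counts $h^0(NA-jL)$, the ``$\beta$-constant'' $5/9$, the joint flags at the nodes) and then declare the two-variable count at the nodes to be the unresolved ``main obstacle.'' That obstacle is precisely what the statement of \cite{cz1} spares its user: with the intersection numbers you already wrote down, the verification is one line. Taking $p_i=1$, the equation $(D-\xi D_i)^2=12-4\xi-\xi^2=0$ gives $\xi_i=2$, and
$$
2\xi_iD^2=48>(D\cdot D_i)\xi_i^2+3D^2=8+36=44,
$$
so the theorem applies and the integral points are degenerate. Without this check (or an honest completion of the filtration analysis you sketch), the proposal contains no proof.

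For comparison, the paper performs this same computation on the plane model: it realizes $\tilde X$ as the blow-up of $\Pp_2$ at six points $P_1,\dots,P_6$ in general position, identifies the six removed lines with the strict transforms of the hexagon lines $L(P_i,P_{i+1})$, $i\in\Z/6\Z$, and proves the inequality for the general $n$-gon configuration (Proposition \ref{ennagono2}: $D_i^2=-1$, $D\cdot D_i=n-4$, $D^2=n(n-4)$), where it reduces to $7n^3-70n^2+207n-192>0$, i.e.\ exactly to $n\geq 6$; this costs nothing extra and yields the divisibility statement, Theorem \ref{ennagono}, as well. One point you raise does matter for either route: if three of the six lines are concurrent (an Eckardt point), the hypothesis of \cite{cz1} that no three components meet fails, and this degenerate case needs a separate word; flagging it, as you do, is not the same as handling it, but in the generic case the proof is the one-line check above.
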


We recall that, after enlarging if necessary the field $k$, one can always find two hyperplanes sections on a smooth cubic surface $\tilde{X}$ consisting of six lines. Let us note that Vojta's conjecture for the specific case of cubic surfaces asserts the degeneracy of integral points after removing  at least two   hyperplanes sections; hence our result proves Vojta's conjecture in this case, up to the condition that both hyperplane sections be completely reducible. Theorem \ref{ctex1} will show that the number of lines to remove cannot be lowered, and that the condition that the six lines lie on two planes cannot be omitted.

A finiteness statement, in place of the degeneracy result of Theorem \ref{1}, cannot hold: actually, the smooth projective cubic surface $\tilde{X}$ contains twenty-seven lines (defined over a suitable extension of the number field $k$); after removing six of them lying on two planes, there still remain twenty-one lines on $X$, each having just two points at infinity. Hence, after enlarging if necessary the ring of $S$-integers, we can always obtain twenty-one infinite familes of integral points. One could prove that for a ``generic'' cubic hypersurface, there are no other infinite families, i.e. the one-dimensional component of the Zariski-closure of $X(\OS)$ is indeed the union of twenty-one lines.. 

A last consideration on the variety $X$ is in order: let $L_i (x_0:\ldots x_3)=0$ (for $i=1,2$) be linear equation for the hyperplane sections $H_i$ appearing in Theorem \ref{1}; then the rational function $f:=L_1/L_2$  on $\Pp_3$ provides a surjective morphism $X\rightarrow\Gm$. It can be easily proved (see Theorem \ref{Albanesecubiche}) that the torus $\Gm$ is in fact the generalized Albanese variety of $X$, and $f:X\rightarrow{\rm Alb}(X)=\Gm$ is the corresponding canonical map; in another language, the logarithmic irregularity of $X$ is $1$. While Faltings and  Vojta proved the degeneracy of integral points on surfaces whose (logarithmic) irregularity is at least three, this is one of the rare examples, to our knowledge, where degeneracy can be proved although the irregularity is only one. We shall even present a more striking example in Corollary \ref{rettescoppiate} to Theorem \ref{divisione}, of a surface where degeneracy of integral points still holds, although its Albanese variety is trivial.
\smallskip

Theorem \ref{1}  admits its natural counterpart in the complex analytic setting:

\begin{analogo}{\bf \ref{1} bis.}
 Let $\tilde{X}\subset\Pp_3$ be a smooth complex cubic surface, and   $X$, as before, the affine surface obtained from $\tilde{X}$ by removing six lines lying on two planes. For every  non costant holomorphic map $f:\C\rightarrow {X}$, the image $f(\C)$ is an algebraic curve.
\end{analogo}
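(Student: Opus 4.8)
The plan is to transpose the proof of Theorem~\ref{1} into Nevanlinna theory, following Vojta's dictionary in which the Subspace Theorem is replaced by the Second Main Theorem for holomorphic curves.

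First I would exploit the fibration structure already isolated above. Setting $\pi:=L_1/L_2$, we have a surjective morphism $\pi:X\to\Gm$ whose fibre over $t\in\Gm$ is the plane cubic $C_t=\tilde X\cap\{L_1=tL_2\}$ with the three fixed points of $\tilde X\cap H_1\cap H_2$ removed; for all but finitely many $t$ this $C_t$ is a smooth genus-one curve punctured at three points. Given a non-constant holomorphic $f:\C\to X$, the composite $\pi\circ f:\C\to\Gm$ is a nowhere-vanishing entire function. If $\pi\circ f$ is constant then $f(\C)$ lies in a single fibre; lifting $f$ through the universal covering $\C\to C_t$, the lift omits three cosets of the period lattice, hence at least two distinct values, so by Picard's theorem it is constant, whence $f$ is constant, against hypothesis. (For the finitely many singular fibres the image is in any case contained in an algebraic curve.) Thus I may assume $\pi\circ f=e^{h}$ with $h$ non-constant entire, which plays the role of the $S$-unit $L_1/L_2$ in the arithmetic proof.

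The heart of the argument is then to run the proof of Theorem~\ref{1} essentially verbatim, reading each Diophantine step analytically. Writing the six lines as $a_1,a_2,a_3,b_1,b_2,b_3$, the hypothesis that $f$ avoids $D=H_1\cup H_2$ means that the counting function $N(r,f,\ell)$ vanishes identically for each of the six lines $\ell$; these vanishings are the exact analytic counterpart of the $S$-integrality conditions. The combinatorial and intersection-theoretic input of the proof of Theorem~\ref{1} — the construction of a filtration of $H^0(\tilde X,nD)$ adapted to the six lines and the inequality it produces between the relevant multiplicities and the intersection numbers of the lines on $\tilde X$ — is purely geometric and carries over unchanged. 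What the Subspace Theorem supplies in the arithmetic proof is here supplied by the Nevanlinna–Cartan Second Main Theorem (in Vojta's form for holomorphic curves, or in the weighted form of Ru–Vojta) applied to the map into projective space attached to that filtration, with the $S$-unit equation theorem replaced by Borel's lemma on sums of units. Assuming $f$ has Zariski-dense image, the outcome is an inequality of the shape $c\,T_f(r)\le\sum_i N(r,f,\ell_i)+o\bigl(T_f(r)\bigr)=o\bigl(T_f(r)\bigr)$, valid outside a set of finite Lebesgue measure, with $c>0$ the positive constant furnished by the geometric construction; since $f$ is non-constant, $T_f(r)\to\infty$, a contradiction. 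Hence $f(\C)$ is not Zariski-dense, i.e. it is contained in an algebraic curve.

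I expect the main obstacle to be the transcendental step, namely establishing the Second Main Theorem inequality on the surface: Cartan's theorem applies to hyperplanes of $\Pp_N$ in general position, whereas the six lines lie on $\tilde X$ and are not in such a position, so one must reproduce the weighting/filtration construction of Theorem~\ref{1} and feed it into a weighted Second Main Theorem, while controlling the error term $o\bigl(T_f(r)\bigr)$ only up to an exceptional set of finite measure — the analytic shadow of the ineffectivity inherited from the Subspace Theorem. A secondary difficulty is the bookkeeping for the finitely many singular or reducible fibres of $\pi$ and for the base locus $\tilde X\cap H_1\cap H_2$, together with the verification that the degenerate conclusion is exactly ``image contained in a curve'' (the curves that actually occur being, as in the arithmetic case, among the twenty-one residual lines).
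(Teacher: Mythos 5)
Your proposal is correct and is essentially the paper's own argument: the authors prove Theorem~\ref{1}~bis precisely by transposing the proof of Theorem~\ref{1} (reduction to the blow-up of $\Pp_2$ at six points, i.e.\ Proposition~\ref{ennagono2}, whose numerical hypotheses feed the Main Theorem of \cite{cz1}) into Nevanlinna theory, replacing Schmidt's Subspace Theorem by Cartan's Second Main Theorem as carried out by Levin \cite{l1} --- exactly the dictionary you describe. Your opening fibration/Picard case analysis is superfluous (a fibre of $L_1/L_2$ is itself an algebraic curve, so that case already satisfies the conclusion, and the exponential $e^h$ plays no role in the paper's argument), but it is harmless.
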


Again, this result is best-possible, as shown by Theorem \ref{ctex1} bis. As for Theorem \ref{1}, for a general surface $X$ as above, one can prove that the image $f(\C)$ is in fact a line on $X$. 
\smallskip

Theorem 1 will be derived from a general statement concerning integral points on a suitable blow-up of the plane. The study of integral points on such rational surfaces, in turn, is motivated by  questions of divisibility among values of polynomials at integral points.

 As mentioned, a tipical example of divisibility problem is the $S$-unit equation in three variables, where one is interested in pairs of $S$-integers $(x,y)\in\OS^2$ such that the $S$-integers $x$, $y$ and $x+y-1$ divide  $1$, in the ring $\OS$ (i.e. they are $S$-units); it is well known that this problem can be reformulated in terms of integral points on the complement of four lines in general position in $\Pp_2$. 
A natural generalization consists of replacing the  constant polynomial by  arbitrary polynomials. In this direction we shall prove  the following result, for which we need a definition:
\medskip

\noindent{\bf Definition}. Let $m\geq 1$ be an integer, $(f_1,g_1),\ldots, (f_k,g_m)$ be pairs of polynomials in $k[X,Y]$. We say that the $m$ pairs above are in {\it general position} if the following conditions are satisfied:
\begin{itemize}
 \item for $1\leq i\neq j\leq m$, the curves of equation $f_i=0, f_j=0$ have no point in common at infinity (after embedding $\A^2\hookrightarrow\Pp_2$);
\item for $1\leq i<j<h\leq m$, the three affine curves $f_i=0, f_j=0, f_h=0$ have no point in common;
\item for each $i\in\{1,\ldots, m\}$ such that $g_i$ is non constant, the affine curves $f_i=0$ and $ g_i=0$ intersect transversely;
\item for $1\leq i<j\leq m$ and $h\in\{i,j\}$, the three curves $f_i=0$, $f_j=0$, $g_h=0$ have no point in common.
\end{itemize}

With this  notation we have:  

\begin{theorem}\label{divisione} 
 Let $(f_1,g_1), (f_2,g_2), (f_3,g_3)$ be three  pairs of nonzero polynomials in $\OS[X,Y]$;   suppose they are in general position in the above sense  and that 
$$
 \deg f_i\geq\max\{1,\deg g_i\}
$$ 
for $i=1,2,3$. Then the set of points $(x,y)\in\OS^2$ such that $f_i(x,y)|g_i(x,y)$ in $\OS$ is not Zariski-dense in $\A^2$.
\end{theorem}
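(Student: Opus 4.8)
The plan is to translate the three divisibility conditions into an integrality statement for points on a suitable blow-up $\tilde X$ of $\Pp_2$, to apply the Main Theorem of \cite{cz1} to deduce that the relevant integral points are not Zariski-dense, and finally to push the conclusion back down to $\A^2$ via the (birational) blow-down morphism.

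First I would make the divisibility-to-integrality translation precise and perform the blow-up. Write $C_i\subset\Pp_2$ for the projective closure of $\{f_i=0\}$, fix a point $(x,y)\in\OS^2$ satisfying $f_i(x,y)\mid g_i(x,y)$ for $i=1,2,3$, and take a valuation $\nu$ outside $S$. Since $x,y$ are $S$-integers, the reduction of $(x,y)$ modulo $\nu$ is an affine point, so it never meets the line at infinity $L_\infty$ nor the part of $C_i$ at infinity; if instead $\nu(f_i(x,y))>0$, then divisibility forces $\nu(g_i(x,y))\ge\nu(f_i(x,y))>0$, so the reduction lands on the finite set $\{f_i=0\}\cap\{g_i=0\}$ (empty when $g_i$ is a nonzero constant, in which case the reduction already avoids $C_i$). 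Let now $\pi\colon\tilde X\to\Pp_2$ be the blow-up at these affine intersection points, for $i=1,2,3$ (and, if needed, at finitely many points of $L_\infty$ to make the total configuration normal-crossing). A local computation in the two standard charts of each blow-up shows that at such a point, where $\nu(g_i)\ge\nu(f_i)>0$, the lifted point reduces onto the exceptional divisor or onto the strict transform of $\{g_i=0\}$, but never onto the strict transform $\tilde C_i$ of $C_i$. Hence every admissible $(x,y)$ lifts to an $S'$-integral point on $\tilde X\setminus D$, where $D:=\tilde L_\infty+\tilde C_1+\tilde C_2+\tilde C_3$ and $S'$ is a finite enlargement of $S$ absorbing the bad reduction of the model.

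Next I would verify that $(\tilde X,D)$ meets the hypotheses of the Main Theorem of \cite{cz1}; the four general-position conditions are tailored exactly to this. The first keeps the $C_i$ from meeting one another at infinity, the second forbids triple points among the $C_i$, and the fourth guarantees that the blown-up points lie on none of the remaining curves, so that after blow-up $D$ has four irreducible components meeting pairwise with no three through a common point. The degree hypothesis $\deg f_i\ge\max\{1,\deg g_i\}$ enters through the self-intersections: blowing up the $\deg f_i\cdot\deg g_i$ transverse intersection points (third condition, together with B\'ezout) gives
$$
\tilde C_i^2=(\deg f_i)^2-\deg f_i\,\deg g_i=\deg f_i\,(\deg f_i-\deg g_i)\ge 0,
$$
which supplies exactly the numerical positivity that the criterion of \cite{cz1} demands of the components of the removed divisor (the other intersection numbers being controlled in the same spirit). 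With these hypotheses checked, \cite{cz1} yields that the $S'$-integral points on $\tilde X\setminus D$ are not Zariski-dense, and since $\pi$ is birational the image in $\A^2$ of such a non-dense set is again contained in a proper closed subset, giving the desired degeneracy.

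The main obstacle I expect is twofold. The first difficulty is carrying out the valuation bookkeeping of the blow-up cleanly enough to obtain genuine integrality, uniformly in $\nu$, rather than a mere quasi-integrality statement; this is delicate at infinity, where a tangency of some $C_i$ with $L_\infty$ may force additional blow-ups and a careful recomputation of intersection numbers. The second, and more serious, difficulty is verifying in full that $(\tilde X,D)$ satisfies the precise numerical inequalities underlying the criterion of \cite{cz1}: this is the step where the degree condition $\deg f_i\ge\deg g_i$ is indispensable and where all four general-position assumptions must be invoked simultaneously.
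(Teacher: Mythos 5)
Your overall architecture --- translating each condition $f_i(x,y)\mid g_i(x,y)$ into $\nu$-integrality with respect to the strict transform of $\{f_i=0\}$ on a blow-up of $\Pp_2$, invoking the Main Theorem of \cite{cz1}, and pushing the conclusion back to $\A^2$ --- is exactly the paper's, and your valuation-theoretic reduction is in substance the paper's Lemma \ref{riduzione}. The genuine gap is at the step you defer to the end as a ``difficulty'': the application of \cite{cz1}. The Main Theorem of \cite{cz1} does \emph{not} ask that the components of the removed divisor have non-negative self-intersection. It asks that there exist positive integer weights $p_i$ such that, setting $D=\sum_i p_iD_i$, one has $D$ nef with $D^2>0$ and, for \emph{each} component, the inequality $2\xi_i D^2 > (D.D_i)\,\xi_i^2 + 3p_i D^2$, where $\xi_i$ is the minimal positive root of $(D-\xi D_i)^2=0$. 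Your assertion that $\tilde C_i^2=\deg f_i\,(\deg f_i-\deg g_i)\ge 0$ ``supplies exactly the numerical positivity that the criterion demands'' is therefore incorrect as stated: no weights are exhibited and none of these inequalities is checked, and this verification is the actual content of the proof rather than bookkeeping ``controlled in the same spirit.''

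The idea that makes the verification tractable, and which is missing from your proposal, is a normalization of the blow-up: one blows up not only the points of $\{f_i=0\}\cap\{g_i=0\}$, but completes each such set with auxiliary points on $\{f_i=0\}$ (chosen off the other curves) so that \emph{exactly} $\deg(f_i)^2$ points are blown up on each curve. The hypothesis $\deg f_i\ge\deg g_i$ is used precisely to guarantee there is room for this (the number $l_i$ of intersection points satisfies $l_i\le\deg(f_i)^2$), and blowing up extra points only enlarges the set of integral points one must prove degenerate, so nothing is lost. After this, each strict transform satisfies $D_i^2=0$ exactly, while $D_i.D_j=d_id_j$, $D_i.H=d_i$ and $H^2=1$ for the untouched line at infinity $H$; one then needs the paper's Proposition \ref{proposizione2}, whose proof chooses $p_i=d_1d_2d_3/d_i$, $c=d_1d_2d_3$, $h=1$, and a weight for $H$ near $3c/4h$ (made integral by rescaling the $p_i$), and reduces the inequalities of \cite{cz1} to explicitly checkable ones --- a computation that only closes because $D_i^2=0$. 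Two smaller inaccuracies: your count of $\deg f_i\cdot\deg g_i$ affine intersection points presumes no intersection of $\{f_i=0\}$ and $\{g_i=0\}$ lies at infinity, which the general position hypotheses do not exclude (the completion device sidesteps this, since only the bound $l_i\le \deg(f_i)^2$ matters); and your proposed extra blow-ups on $L_\infty$ to force normal crossings are both unnecessary (the criterion needs only that no three components share a point) and harmful, since they would lower $H^2$ and $\tilde C_i^2$ and spoil the very intersection numbers you rely on.
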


In the case $\deg(f_i)=1,\, g_i\equiv 1$, the statement is equivalent to the $S$-unit theorem in three variables.   
Theorem \ref{divisione} is best possible, in the sense that its conclusion certainly does not hold if we just suppose $f_i(x,y)|g_i(x,y)$ for $i=1,2$: simply take $f_1=x, f_2=y, g_1=g_2=1$. 
 Also, it is easy to see that none of the four general position  conditions in the Definition can be omitted  (although they might probably be replaced by weaker ones). \medskip

Of course,   its analogue in complex analysis, which still holds, assures, under the above hypotheses for $f_i,g_i$, the algebraic dependence of any pair of entire functions $\varphi,\psi$ such that the three functions $g_i(\varphi,\psi)/f_i(\varphi,\psi)$ are holomorphic. This is an extension of Borel's theorem on pairs of entire functions $\varphi,\psi$ such that $1/\varphi, 1/\psi, 1/(\varphi+\psi-1)$ are holomorphic.
\medskip

A concrete example is the following very special case of Theorem \ref{divisione}, which is however still strong enough to imply the $S$-unit theorem in three variables; it is obtained by taking $g_1=g_2=g_3$, which is not excluded by the general assumption position:

\begin{corollary}
 Let $g(X,Y)\in\OS[X,Y]$ be a polynomial of degree $\leq 1$ such that $g(0,0)\neq 0, g(1,0)\neq 0, g(0,1)\neq 0$. The pairs $(x,y)\in\OS^2$ such that $xy(1-x-y)|g(x,y)$ are not Zariski-dense in $\A^2$.
\end{corollary}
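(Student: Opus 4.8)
The plan is to obtain the Corollary as an immediate specialization of Theorem~\ref{divisione}, applied to the three pairs
$$
(f_1,g_1)=(X,\,g),\qquad (f_2,g_2)=(Y,\,g),\qquad (f_3,g_3)=(1-X-Y,\,g).
$$
The bridge between the two statements is the elementary remark that divisibility by a product is stronger than divisibility by each factor: if $(x,y)\in\OS^2$ satisfies $xy(1-x-y)\mid g(x,y)$, then in particular $x\mid g(x,y)$, $y\mid g(x,y)$ and $(1-x-y)\mid g(x,y)$. Thus the set considered in the Corollary is \emph{contained} in the set of points satisfying the divisibility hypotheses of Theorem~\ref{divisione} for this choice of pairs; since any subset of a set that is not Zariski-dense is again not Zariski-dense, it is enough to verify the hypotheses of that theorem. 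Note that only this one implication is needed, so I never have to invoke any coprimality of $x$, $y$, $1-x-y$.

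Checking the hypotheses is routine. The polynomials are nonzero, since $g\not\equiv 0$ (because $g(0,0)\neq 0$); and the degree condition $\deg f_i\geq\max\{1,\deg g_i\}$ holds trivially, each $f_i$ being linear while $\deg g\leq 1$. For general position I would run through the four conditions of the Definition for the three lines $X=0$, $Y=0$, $1-X-Y=0$ and the line (or point) $g=0$. These three lines meet the line at infinity in the three distinct points $[0:1:0]$, $[1:0:0]$, $[1:-1:0]$, giving the first condition; they have no common affine point, since $x=y=0$ forces $1-x-y=1\neq0$, giving the second; and when $g$ is non-constant the line $g=0$ is distinct from each $f_i=0$ (otherwise $g$ would be a scalar multiple of $X$, $Y$ or $1-X-Y$, contradicting $g(0,0)\neq0$ or $g(1,0)\neq0$), so being two distinct lines they meet transversally, giving the third.

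The essential point, and the only place where the three nonvanishing hypotheses on $g$ are really used, is the fourth condition: for each pair $i<j$ the curves $f_i=0$, $f_j=0$, $g=0$ must share no point. The pairwise intersections of the three lines are precisely the vertices $(0,0)$, $(0,1)$, $(1,0)$, and the requirement that $g=0$ avoid them is exactly $g(0,0)\neq0$, $g(0,1)\neq0$, $g(1,0)\neq0$. With all assumptions checked, Theorem~\ref{divisione} yields the non-density of the larger set, hence of the set in the Corollary. I do not anticipate any genuine obstacle: all the mathematical content sits in Theorem~\ref{divisione}, and the task reduces to matching this explicit four-line configuration to the general position axioms.
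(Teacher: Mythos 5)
Your proof is correct and is essentially the paper's own route: the paper presents this Corollary precisely as the special case of Theorem~\ref{divisione} with $(f_1,f_2,f_3)=(X,Y,1-X-Y)$ and $g_1=g_2=g_3=g$, exactly your choice of pairs. Your two additions—the observation that $xy(1-x-y)\mid g(x,y)$ implies each factor divides $g(x,y)$, and the explicit check of the four general position conditions (with the three nonvanishing hypotheses on $g$ matching the three pairwise intersection points $(0,0)$, $(1,0)$, $(0,1)$)—are just the routine details the paper leaves implicit.
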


More generally, the case of Theorem \ref{divisione} in which $f_1,f_2,f_3,g_1,g_2,g_3$ have all degree one seems worth being mentioned. It is equivalent to the following 

\begin{corollary}\label{rettescoppiate}
Let $L_1,\ldots,L_4$ be four lines in general position on the plane $\Pp_2$ and choose three  points  $P_1,P_2,P_3$, with $P_i\in L_i$ for $i=1,2,3$, $P_i\not\in L_j$ if $j\neq i$.  
 Let $\tilde{X}\to\Pp_2$ be the blow-up of the three points $P_1,P_2,P_3$ and let $D\subset \tilde{X}$ be the strict transform of $L_1+\ldots+L_4$.  Let $X:=\tilde{X}\setminus D$ be its complement. Then $X(\OS)$ is not Zariski-dense. 
\end{corollary}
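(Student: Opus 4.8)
The plan is to deduce the corollary from the all-linear case of Theorem \ref{divisione}, by translating the $S$-integrality of a point of $X$ into a system of divisibility conditions. First I would choose coordinates on $\Pp_2$ so that $L_4$ is the line at infinity; then $\A^2=\Pp_2\setminus L_4$ carries affine coordinates $(x,y)$, and for $i=1,2,3$ the line $L_i$ becomes an affine line with a linear equation $\ell_i(x,y)=0$, where $\ell_i\in\OS[X,Y]$ has degree $1$. Since $L_1,\dots,L_4$ are in general position, no three are concurrent, so $\ell_1,\ell_2,\ell_3$ are pairwise non-parallel and have no common zero. Each $P_i$ is an affine point (as $P_i\notin L_4$); I would then pick, for each $i$, a linear form $g_i\in\OS[X,Y]$ whose zero line passes through $P_i$ and is distinct from $L_i$ (hence transverse to it). Setting $f_i:=\ell_i$, the three pairs $(f_1,g_1),(f_2,g_2),(f_3,g_3)$ will be the input to Theorem \ref{divisione}, and clearly $\deg f_i=\deg g_i=1\ge\max\{1,\deg g_i\}$.

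The core of the argument is a local computation at the blown-up points showing that, for $(x,y)\in\OS^2$, the reduction condition defining $S$-integrality on $X$ is exactly the divisibility system. For $L_4$ this is immediate: an $S$-integral point of $X$ does not reduce onto $\tilde L_4=L_4$ modulo any $\nu\notin S$, which just says $x,y\in\OS$. For each $i\in\{1,2,3\}$ I would fix $\nu\notin S$ and analyze when the reduction of $(x,y)$ meets the strict transform $\tilde L_i$. If $\nu(\ell_i(x,y))=0$ the point does not specialize onto $L_i$ and hence misses $\tilde L_i$; if it specializes onto $L_i$ away from $P_i$ (that is $\nu(\ell_i)>0$, $\nu(g_i)=0$) it lands on $\tilde L_i\setminus E_i$; and if it specializes to $P_i$ (both $\nu(\ell_i),\nu(g_i)>0$) one passes to the blow-up, where the point lies on $\tilde L_i$ precisely when $\nu(\ell_i(x,y))>\nu(g_i(x,y))$. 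This is the chart computation, using that $E_i$ (which is not part of $D$) and $\tilde L_i$ separate directions at $P_i$, with $\tilde L_i$ cut out by $\ell_i/g_i=0$ in the relevant chart. In all cases the reduction avoids $\tilde L_i$ for every $\nu\notin S$ if and only if $\nu(\ell_i(x,y))\le\nu(g_i(x,y))$ for all $\nu\notin S$, i.e. if and only if $\ell_i(x,y)\mid g_i(x,y)$ in $\OS$. I expect this reduction-on-the-blow-up bookkeeping to be the main obstacle, as it requires keeping careful track of which affine chart the specialized point falls into.

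It then remains to check the general position hypotheses and to conclude. Conditions (i) and (ii) of the Definition hold because $L_1,L_2,L_3$ are pairwise non-parallel and non-concurrent; condition (iii) holds since each $g_i$-line is distinct from $L_i$, hence meets it transversally; and condition (iv) is automatic, because for $h\in\{i,j\}$ the point $L_i\cap L_j$ lies on $L_h$ but differs from $P_h$ (as $P_h\notin L_j$ resp. $P_h\notin L_i$), so the only line through $P_h$ containing it is $L_h$ itself, which $g_h$ is not. Theorem \ref{divisione} then gives that $\{(x,y)\in\OS^2:\ell_i(x,y)\mid g_i(x,y),\ i=1,2,3\}$ is not Zariski-dense in $\A^2$. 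By the correspondence above this set is the image under the (birational) blow-down $\pi$ of the $S$-integral points of $X$ lying off the exceptional divisors $E_1,E_2,E_3$; since the $E_i$ are curves they cannot affect Zariski-density on the surface $X$, and since $\pi$ is birational, non-density in $\A^2$ pulls back to non-density of $X(\OS)$ in $X$. This proves the corollary.
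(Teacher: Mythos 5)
Your proof is correct, and in substance it is the route the paper itself indicates: Corollary \ref{rettescoppiate} is the all-linear case of Theorem \ref{divisione}, translated through the blow-up correspondence. Your choice of the auxiliary lines $g_i$, the check of the four general-position conditions (including the nice observation that condition (iv) is automatic, since the line joining $P_h$ to $L_i\cap L_j$ is $L_h$ itself, which $g_h$ is not), the valuation computation in the blow-up chart, and the final density bookkeeping are all sound. The one structural difference worth noting is the direction in which the correspondence is run. The paper's proof of Theorem \ref{divisione} uses only the forward implication of Lemma \ref{riduzione} (divisibility implies integrality on the blow-up) and then invokes Proposition \ref{proposizione2}; the corollary is precisely that intermediate degeneracy statement in the case $\deg f_i=\deg g_i=1$ (intersection data $D_i^2=0$, $D_i.D_j=1$, $D_i.H=1$, $H^2=1$, with all weights equal to $1$), so the paper obtains it without ever returning from the surface to the plane. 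Your deduction, which treats Theorem \ref{divisione} as a black box, makes the round trip surface $\to$ plane $\to$ surface, and therefore genuinely needs the converse implication of Lemma \ref{riduzione} --- the one the paper states but explicitly declines to prove (``which will not be used in the proof''); your chart computation correctly supplies it. What your route buys is a clean dependence on the stated theorem only; what the paper's route buys is directness, since Proposition \ref{proposizione2} applies to $X$ immediately. Two small points of rigor: (a) you should state that $S$ is first enlarged by finitely many places so that the four lines, the lines $g_i=0$, and the blow-up all have good reduction at every $\nu\notin S$ --- your case analysis (e.g.\ ``specializes to $P_i$ iff $\nu(\ell_i)>0$ and $\nu(g_i)>0$'') needs this, and it is harmless because $X(\OS)\subseteq X({\cal O}_{S'})$ for $S\subseteq S'$; (b) strictly speaking the image of the integral points off the exceptional curves is only contained in, not equal to, the divisibility set, but containment is all your argument uses.
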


The interest of this corollary lies in the geometrical fact:

\begin{theorem}\label{simplyconnected}
 The above defined affine surface $X$ is simply connected. In particular, its generalized Albanese variety is trivial. 
\end{theorem}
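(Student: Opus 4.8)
The plan is to compute $\pi_1(X)$ by topological means and deduce triviality of the generalized Albanese formally. Write $\pi\colon\tilde X\to\Pp_2$ for the blow-up and $E_1,E_2,E_3$ for the exceptional curves over $P_1,P_2,P_3$. Since each $P_i$ lies on $L_i$ only (in particular $P_i\notin L_4$), the intersection multiplicity $E_i\cdot\tilde L_j$ equals $1$ for $j=i$ and $0$ otherwise; hence $E_i$ meets the divisor $D=\tilde L_1+\cdots+\tilde L_4$ in the single point $q_i:=E_i\cap\tilde L_i$, so $E_i\cap X=E_i\setminus\{q_i\}\cong\A^1$, and the three curves $E_i\cap X$ are disjoint and simply connected. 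Removing them, the restriction of $\pi$ identifies $U:=X\setminus(E_1\cup E_2\cup E_3)$ with $\Pp_2\setminus(L_1\cup\cdots\cup L_4)$, the complement of four lines in general position.

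I would then recover $X$ from $U$ by van Kampen. Choose disjoint tubular neighbourhoods $T_i\subset X$ of the affine curves $E_i\cap X\cong\A^1$; each $T_i$ is contractible (a disc bundle over $\A^1$), its intersection with $U$ is a punctured neighbourhood $T_i^{*}$ homotopy equivalent to a circle, with $\pi_1(T_i^{*})\cong\Z$ generated by a small meridian loop $\delta_i$ around $E_i$. As $X=U\cup T_1\cup T_2\cup T_3$ with the $T_i$ disjoint and each $\pi_1(T_i)$ trivial, van Kampen yields
$$
\pi_1(X)\;\cong\;\pi_1(U)\big/\langle\!\langle\,\delta_1,\delta_2,\delta_3\,\rangle\!\rangle .
$$
By the Zariski--van Kampen theorem, $\pi_1(U)$ is generated by the meridians $\mu_1,\ldots,\mu_4$ of the four lines, subject to the relation $\mu_1\mu_2\mu_3\mu_4=1$ coming from $\Pp_2$ (in fact $\pi_1(U)\cong\Z^3$, but I will only use this generation statement). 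Everything thus reduces to locating each $\delta_i$ inside $\pi_1(U)$.

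The key computation is local and is the step I expect to require the most care. Working in analytic coordinates $(s,t)$ near $P_i$ with $L_i=\{t=0\}$ and no other line present, a meridian of $E_i$ pushes forward under $\pi$ to a loop of the form $\theta\mapsto(\varepsilon e^{i\theta},\varepsilon m_0 e^{i\theta})$ with $m_0\neq0$; its $t$-coordinate winds once around $0$, so in $\Pp_2\setminus(L_1\cup\cdots\cup L_4)$ it is homotopic to the meridian $\mu_i$. Hence $\delta_i=\mu_i$ for $i=1,2,3$. Imposing $\mu_1=\mu_2=\mu_3=1$, the relation $\mu_1\mu_2\mu_3\mu_4=1$ forces $\mu_4=1$ as well; since $\pi_1(U)$ is generated by the conjugates of $\mu_1,\ldots,\mu_4$, the quotient is trivial and $\pi_1(X)=1$ (note this conclusion does not even require $\pi_1(U)$ to be abelian). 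It is precisely because the three blow-ups lie on $L_1,L_2,L_3$, together with the product relation, that the remaining meridian $\mu_4$ is annihilated for free; this is where the hypotheses on the $P_i$ enter decisively.

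Finally, simple connectivity gives $H_1(X;\Z)=0$, hence $H^1(X;\C)=0$, so $X$ admits no nonzero closed $1$-form. A nontrivial generalized Albanese variety, being semiabelian, would receive a nonconstant morphism from $X$ pulling back a nonzero invariant $1$-form; as no such form exists, $\mathrm{Alb}(X)$ is trivial. The main obstacle is the local identification $\delta_i=\mu_i$: one must carefully track the meridian of the exceptional curve through the blow-up and verify that, modulo the components of $D$ not passing through $P_i$, it becomes exactly the meridian of $L_i$.
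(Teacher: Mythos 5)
Your computation of $\pi_1(X)$ is correct, but it follows a genuinely different route from the paper's. The paper starts from the same open set $U=X\setminus(E_1\cup E_2\cup E_3)\simeq\Pp_2\setminus(L_1\cup\cdots\cup L_4)$, but then uses only three ingredients: a general lemma that removing a closed complex submanifold induces a \emph{surjection} $\pi_1(U)\twoheadrightarrow\pi_1(X)$; Zariski's theorem that $\pi_1(U)\cong\Z^3$; and an algebro-geometric lemma (via Riemann's Existence Theorem, function fields and the cyclic Picard group of $\Pp_2$) showing that $X$ has no connected cyclic unramified cover of degree $>1$ --- since a nontrivial finitely generated abelian group always has a nontrivial finite cyclic quotient, $\pi_1(X)$ must vanish. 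You instead determine the \emph{kernel} of $\pi_1(U)\to\pi_1(X)$ exactly, as the normal closure of the meridians $\delta_i$ of the exceptional curves, and then carry out the local blow-up computation $\delta_i=\mu_i$ (up to conjugation and orientation, which is all you need, as you only quotient by normal closures); the product relation then kills $\mu_4$ for free. Your route needs less than Zariski's $\Z^3$ theorem --- only generation of $\pi_1(U)$ by conjugates of the meridians together with the product relation --- and it avoids the covering-space/function-field lemma entirely, replacing it by the explicit tracking of meridians through the blow-up, which is indeed the step requiring the most care and which you carry out correctly. Conversely, the paper's argument avoids that local analysis and stays within its algebro-geometric toolkit. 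Both proofs of simple connectedness are valid.

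Your deduction of the Albanese statement, however, contains a genuine error: from $H^1(X;\C)=0$ you conclude that ``$X$ admits no nonzero closed $1$-form.'' This is false: $X$ is quasi-affine, so $dF$ is a closed (exact) holomorphic $1$-form for every nonconstant regular function $F$. Vanishing of $H^1$ only kills cohomology classes, and a priori the pullback $a^*\omega$ of an invariant form under a nonconstant morphism $a:X\to G$ could be exact, so no contradiction is reached. To close the gap you must show that $a^*\omega$ has a nonzero period (equivalently, invoke Deligne's theorem that nonzero logarithmic $1$-forms inject into $H^1(X;\C)$). An elementary repair consistent with your setup: any morphism from $X$ to an abelian variety extends to $\tilde{X}$ (rational maps from smooth varieties to abelian varieties are morphisms) and is constant since $\tilde{X}$ is rational; hence a nonconstant morphism to a semiabelian variety produces a nonconstant unit $f\in\mathcal{O}(X)^*$. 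Its divisor on $\tilde{X}$ is nonzero and supported on $D_1+\cdots+D_4$, and a meridian $\gamma$ around a component occurring with multiplicity $m\neq 0$ gives $\oint_\gamma df/f=2\pi i m\neq 0$, so $\gamma\neq 0$ in $H_1(X;\Z)$, contradicting $\pi_1(X)=1$. (Alternatively: $D_1,\ldots,D_4$ are linearly independent in ${\rm Pic}(\tilde{X})$, so $X$ has no nonconstant units at all.) With this repair your proof is complete; note that the paper itself states the implication ``simply connected $\Rightarrow$ trivial Albanese'' without proof, as standard.
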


So, we have an example of a {\it simply connected smooth algebraic variety whose integral points, over any ring of $S$-integers, are never Zariski-dense}.  We think this is the first example of a simply connected variety for which such a statement has been proved. Since smooth simply connected curves are isomorphic to $\A^1$ or $\Pp_1$, there cannot exist examples in dimension one. Moreover, no example can be found by using the powerful technique of Faltings-Vojta  which, employing in an essential way the  (generalised) Albanese variety, cannot give anything for simply-connected varieties.
 
Of course, its analogue in Nevanlinna theory still holds: for every holomorphic map $f:\C\to X(\C)$, the image $f(\C)$ is contained in an algebraic curve. Again, this seems to be the first example of a simply connected algebraic variety for which it is known  that no entire curve is Zariski dense. By other methods,   one can prove that the ``very generic'' hypersurface in $\Pp_3$ of high degree is hyperbolic: for instance, in \cite{paun}  this fact is proved for hypersurfaces of degree 18. Hence, the {\it existence} of simply connected surfaces admitting no Zariski-dense entire curve was proved, even in the compact case, but no single explicit example was known. 
\medskip

A variation of Theorem \ref{divisione} concerns homogeneous forms in three variables instead of polynomials. For simplicity we state only a particular case of a much more general result which could be proved by our methods:

\begin{theorem}\label{forme}
 Let $F_1,\ldots,F_r$ be absolutely irreducible homogeneous forms in three variables, of the same degree, with coefficients in a ring of integers $\OS$. Let $G$ be another absolutely irreducible homogenous form in three variables, still defined over $\OS$.
Suppose that: (1) for every point $p$ where $F_i=G=0$ for some $i$, $p$ is a smooth point of both curves $F_i=0$ and $G=0$ and the corresponding tangents are distinct; (2)
 for distinct $1\leq i<j<h\leq r$, there is no non-trivial solution  to the equation $F_i=F_j=F_k=0$.  Suppose moreover that
$$
\deg(F_i)\geq \deg(G)\quad {\rm for}\quad i=1,\ldots,r\qquad {\rm and}\qquad r\geq 5.
 $$
Then the integral points $(x,y,z)\in\OS^3$ such that 
\begin{equation}\label{divisioneforme}
F_i(x,y,z)|G(x,y,z)\qquad{\rm for}\qquad i=1,\ldots,r,
\end{equation}
in the ring $\OS$, are not Zariski-dense in $\Pp_2$.
\end{theorem}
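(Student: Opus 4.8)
The plan is to convert the divisibility conditions into a statement of $S$-integrality on a blow-up of $\Pp_2$, and then to apply the Main Theorem of \cite{cz1}. Write $C_i=\{F_i=0\}$ and $C_G=\{G=0\}$ for the plane curves involved, and let $P=(x:y:z)$ be a solution, represented by a primitive triple in $\OS^3$; we may discard the finitely many $P$ lying on some $C_i$, so assume $F_i(P)\neq 0$ for all $i$. For a valuation $\nu\notin S$ the hypothesis $F_i(P)\mid G(P)$ reads $\nu(F_i(P))\le\nu(G(P))$. Hence, whenever the reduction $\bar P_\nu$ lies on $C_i$, that is $\nu(F_i(P))>0$, it must also lie on $C_G$; thus $\bar P_\nu$ is confined to the finite set $\Sigma:=\bigcup_{i=1}^r(C_i\cap C_G)$, whose points are, by hypothesis~(1), transversal intersections.

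Let $\pi:\tilde X\to\Pp_2$ be the blow-up at the points of $\Sigma$, with exceptional curve $E_p$ over $p\in\Sigma$, let $\tilde C_i$ be the strict transform of $C_i$, and set $D:=\sum_{i=1}^r\tilde C_i$. At a point $p\in C_i\cap C_G$ choose local coordinates $u,v$ with $C_i=\{u=0\}$ and $C_G=\{v=0\}$ (possible by transversality); in the chart where $E_p$ carries the coordinate $s=v/u$ the inequality $\nu(F_i(P))\le\nu(G(P))$ becomes $\nu(s)\ge 0$, which says exactly that the lifted reduction $\overline{\tilde P}_\nu$ avoids the point $\tilde C_i\cap E_p=\{s=\infty\}$. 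Together with the confinement from the previous paragraph, this shows that $\overline{\tilde P}_\nu$ avoids $D$ for every $\nu\notin S$; in other words the lift $\tilde P$ is an $S$-integral point of $\tilde X\setminus D$. Degeneracy of these lifts implies, via $\pi$, degeneracy of the original solutions in $\Pp_2$.

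It then remains to apply the Main Theorem of \cite{cz1} to $\tilde X\setminus D$, and here the two numerical hypotheses enter. First, the degree condition is precisely what controls the self-intersections: since $C_i$ meets $C_G$ in $\deg(F_i)\deg(G)$ transversal points, all of which are blown up and all of which are the only points of $\Sigma$ lying on $C_i$, one finds $\tilde C_i^2=\deg(F_i)^2-\deg(F_i)\deg(G)=\deg(F_i)(\deg(F_i)-\deg(G))\ge 0$, the inequality being exactly $\deg(F_i)\ge\deg(G)$. Second, hypothesis~(2) guarantees that no three of the $\tilde C_i$ pass through a common point, while the blow-up separates any two $C_i,C_j$ that met on $C_G$. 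With these facts, the bound $r\ge 5$ is what is needed to meet the threshold in the Main Theorem of \cite{cz1} that forces the $S$-integral points of $\tilde X\setminus D$ to be not Zariski-dense.

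The main obstacle I foresee is the verification of the numerical hypotheses of \cite{cz1} on the blown-up surface: one has to compute the intersection pairing among the $\tilde C_i$ and the $E_p$ and check that the inequality forcing degeneracy holds exactly under $r\ge 5$ and $\deg(F_i)\ge\deg(G)$, and I expect the constant $5$ to emerge from this computation rather than being imposed by hand. A subsidiary point requiring care is the local analysis at a point $p$ where two of the curves $C_i,C_j$ happen to concur on $C_G$: there one must use that $C_i,C_j,C_G$ have pairwise distinct tangents at $p$ to ensure that the two inequalities $\nu(F_i(P))\le\nu(G(P))$ and $\nu(F_j(P))\le\nu(G(P))$ still drive $\overline{\tilde P}_\nu$ away from both $\tilde C_i\cap E_p$ and $\tilde C_j\cap E_p$, so that $\tilde P$ remains integral on $\tilde X\setminus D$.
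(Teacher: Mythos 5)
Your first half --- converting $F_i(P)\mid G(P)$ into $S$-integrality of the lift on the blow-up at $\Sigma$, via the local coordinates $u,v$ at each transversal point of $C_i\cap C_G$ --- is correct and is exactly the paper's first step (it is the content of Lemma \ref{riduzione}, applied after a harmless enlargement of $S$). The genuine gap is in the second half: you invoke the Main Theorem of \cite{cz1} but never verify its numerical hypotheses, and you flag this yourself as ``the main obstacle'', merely expecting the threshold $r\ge 5$ to emerge. That verification is not a deferrable detail; it is the entire quantitative content of the theorem, the only place where $r\ge 5$ and $\deg F_i\ge\deg G$ are used. Worse, on your surface the strict transforms have $\tilde C_i^2=d(d-\deg G)$, which is strictly \emph{positive} whenever $\deg G<d$; in that asymmetric situation no ready-made statement of \cite{cz1} applies off the shelf (the $\tilde C_i$ are not linearly equivalent, and the zero-self-intersection criterion does not apply), so one would have to check by hand, for $D=\sum_i\tilde C_i$ and each $i$, an inequality of the shape $2\xi_i D^2>(D.\tilde C_i)\,\xi_i^2+3D^2$, where $\xi_i$ is the minimal positive root of $(D-\xi\tilde C_i)^2=0$, together with $D^2>0$ and $D$ nef. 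You have not done this, and nothing in your outline forces the constant $5$.

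The paper closes this gap with two devices you are missing. When $\deg G=d$, the blow-up makes all self-intersections vanish: $\tilde C_i^2=0$, $\tilde C_i.\tilde C_j=d^2>0$, no three curves through a point --- verbatim the hypotheses of Theorem 1, part (b), of \cite{cz1}, in which the threshold $r\ge5$ is already established; so this case is settled by citation, with no new computation. When $\deg F_i>\deg G$, instead of working with curves of positive self-intersection, the paper blows up $d(d-\deg G)$ \emph{additional} generic points on each $C_i$ (chosen off the other curves): blowing up more points only shrinks the divisor with respect to which integrality is required, hence proves a stronger statement, and on the bigger blow-up one again has $\tilde C_i^2=0$ and case (b) applies. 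Your proposal stops exactly where one of these completions (or an explicit inequality check for $\tilde C_i^2>0$) is needed. A smaller point: both you and the paper implicitly assume no point of $C_i\cap C_G$ lies on a second curve $C_j$; hypothesis (1) gives only transversality of $C_i$ and $C_G$, so your proposed fix via ``pairwise distinct tangents'' of $C_i,C_j,C_G$ is not available from the stated hypotheses, and if such a point existed then $\tilde C_i.\tilde C_j$ would drop below $d^2$, spoiling the symmetric intersection data on which both arguments rely.
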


(By abuse of notation, we have identified the vector $(x,y,z)\in\OS^3$ with its corresponding point $(x:y:z)$ in $\Pp_2$; this will be possible since both sides on   (\ref{divisioneforme}) are homogeneous and $\deg F_i\geq \deg G$, so if $(x,y,z)$ is a solution of (\ref{divisioneforme}) and $\lambda\in\OS$ is a common divisor of $x,y,z$, then a fortiori $(x/\lambda,y/\lambda,z/\lambda)$ will be another solution). Again, the conditions on the degree and on the number of forms are sharp, as shown by Corollary \ref{ctforme} \S\ref{controesempi}.

\medskip

We shall show below another corollary, obtained from the case $\deg(F_i)=\deg(G)=2$ for all $i$. For this purpose, we recall some classical facts about  Del Pezzo surfaces of degree four (see e.g. \cite{beau}). A Del Pezzo surface of degree four can be obtained from the projective plane by blowing up  five points in general position: it can be embedded in a four-dimensional projective space, as a smooth complete intersection of two quadrics; let us denote it by $\tilde{X}\subset\Pp_4$. It contains exactly sixteen lines, and infinitely many conics. A hyperplane section of $\tilde{X}$ is  a quartic curve in a three-dimensional space, which will not be in general a plane curve; some special hyperplane sections, however, will be reducible, consisting of the union of two (plane) conics; it is easy to see that there are infinitely many such reducible hyperplane sections. Now, if   five generic hyperplane sections are removed from $\tilde{X}$, one obtains a closed (affine) variety $X\subset\Gm^4$ (since the complement in $\Pp_4$ of five hyperplanes in general position is isomorphic to $\Gm^4$), and an application of the $S$-unit theorem in several variables proves the degeneracy of its integral points. In the particular case where the hyperplane sections are reducible, we improve on such a  consequence of the $S$-unit theorem: as an application of Theorem \ref{forme} we shall obtain 

\begin{corollary}\label{DelPezzo}
 Let $\tilde{X}\subset\Pp_4$ be a Del Pezzo surface of degree four. Let $H_1,\ldots, H_5$ be five hyperplane sections, each of the form $H_i=\cc_i+\cc_i^\prime$, for smooth conics $\cc_i,\cc_i^\prime$. Suppose they are in general position, in the sense that no three of them intersect. Set $X:=\tilde{X}\setminus (\cc_1\cup\ldots\cup\cc_5)$. Then the integral points for the set $X(\OS)$ are not Zariski-dense. 
\end{corollary}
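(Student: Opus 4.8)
The plan is to transport the problem from $\tilde X$ down to $\Pp^2$ and there invoke Theorem \ref{forme} in the borderline case $r=5$, $\deg F_i=\deg G=2$. Recall that $\tilde X$ is obtained from $\Pp^2$ by blowing up five points $p_1,\dots,p_5$ in general position; let $\pi\colon\tilde X\to\Pp^2$ be the contraction and $E_1,\dots,E_5$ the exceptional curves. Since every smooth conic on a degree–four Del Pezzo surface is, with respect to a fixed marking, the strict transform of a plane conic through exactly four of the five centres, I would (after choosing $\pi$ suitably for the given configuration) write each removed conic as $\cc_i=\tilde F_i$, where $F_i\in\OS[X,Y,Z]$ is the quadratic form cutting out the conic through the four centres $\{p_k:k\ne i\}$, and introduce $G\in\OS[X,Y,Z]$ as the quadratic form cutting out the unique conic through all five centres. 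The goal is to show that the $\pi$–image of an $S$–integral point of $X$ satisfies the divisibility relations $F_i\mid G$ for $i=1,\dots,5$; since here $\deg F_i=\deg G=2$ and $r=5$, Theorem \ref{forme} would then force these images, and hence $X(\OS)$ itself (as $\pi$ is birational), to be non–Zariski–dense.

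The heart of the argument is the local dictionary between $S$–integrality on $X$ and divisibility, which I would set up place by place. Fix $P\in X(\OS)$ and a valuation $\nu\notin S$. If the reduction $P_\nu$ lies on no $F_i$ then $\nu(F_i(P))=0$ and nothing is to be checked. Otherwise, because $P$ avoids every $\cc_i=\tilde F_i$ modulo $\nu$, its reduction can meet $\{F_i=0\}$ only at a point that has been blown up, i.e.\ at one of the centres $p_j$ with $j\ne i$; thus $P_\nu$ specialises onto some $E_j$. Choosing local coordinates at $p_j$ in which $\{F_i=0\}$ and $\{G=0\}$ are the two transverse coordinate axes, the point of $E_j$ to which $P$ reduces is encoded by the comparison of $\nu(F_i(P))$ and $\nu(G(P))$, the forbidden point $\tilde F_i\cap E_j$ corresponding exactly to $\nu(F_i(P))>\nu(G(P))$. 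Hence avoidance of $\cc_i$ at $\nu$ is equivalent to $\nu(F_i(P))\le\nu(G(P))$, and letting $\nu$ range over all places outside $S$ yields $F_i(P)\mid G(P)$ in $\OS$ for every $i$.

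It then remains to verify the hypotheses of Theorem \ref{forme} for $F_1,\dots,F_5,G$. Condition (1) is immediate from the general–position assumption on the $\cc_i$: the intersection $\{F_i=G=0\}$ consists precisely of the four centres $p_k$ ($k\ne i$), at each of which the two smooth conics meet transversally for a generic configuration. The main obstacle is condition (2), which demands that no three of the $F_i$ share a common zero: the five conics, each passing through four of the five centres, necessarily meet three at a time at the centres, since a simple incidence count (twenty incidences among five points) puts at least three — in fact four — of them through some $p_j$. This concurrency is intrinsic to the Del Pezzo geometry and, as one checks on the Picard lattice, cannot be removed by any change of birational model, so it is exactly here that care is required. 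I expect the resolution to rest on the fact that these triple points all lie on the curve $\{G=0\}$, through which all the centres pass, so that they create no new solutions once divisibility by $G$ is imposed; making this precise — showing that concurrency located at the common zeros of $G$ is harmless for the divisibility mechanism underlying Theorem \ref{forme} — is the decisive step to be supplied.

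Finally I would confirm that the exceptional fibres of $\pi$ and the boundary of $X$ contribute at most a proper closed subset, so that Zariski density is transported faithfully between $\Pp^2$ and $X$ and the conclusion of Theorem \ref{forme} indeed yields the non–density of $X(\OS)$.
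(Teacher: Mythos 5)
Your reduction of integrality on $X$ to the divisibility relations $F_i(P)\mid G(P)$ is correct --- it is precisely Lemma \ref{riduzione} applied at the blown-up centres, and it matches the deduction of Corollary \ref{DelPezzo} from Theorem \ref{forme} that the paper itself sketches. But you have also correctly diagnosed why this route, as you set it up, cannot be completed: hypothesis (2) of Theorem \ref{forme} genuinely fails, since any three of the conics $F_i,F_j,F_h$ pass through the two centres $p_k$ with $k\notin\{i,j,h\}$, and this concurrency cannot be configured away. Your proposal then ends by postulating that concurrency located on $\{G=0\}$ ``should be harmless'' and leaving this as ``the decisive step to be supplied''. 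That step is the crux of the whole matter, and it cannot be supplied while treating Theorem \ref{forme} as a black box: its statement is simply inapplicable to this configuration, so as written your argument is incomplete exactly where it needs to be complete.

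What fills the gap --- and is the paper's actual, primary proof, a one-line argument --- is to bypass Theorem \ref{forme} altogether and apply the result underlying it, Theorem 1 of \cite{cz1}, directly on the Del Pezzo surface: on $\tilde{X}$ each removed conic $\cc_i$ has self-intersection zero (by adjunction, or because it is the strict transform of a plane conic through four of the five blown-up points), the products $\cc_i\cdot\cc_j$ for $i\neq j$ equal one, no three of the $\cc_i$ meet (this is the general-position hypothesis of the Corollary), and $r=5$; these are exactly the hypotheses of Theorem 1, part (b), of \cite{cz1}. Equivalently, in your language: the triple points of the plane configuration all lie on $\{G=0\}$, hence are among the points blown up in the proof of Theorem \ref{forme}; that blow-up reconstructs $\tilde{X}$ itself, the strict transforms are separated there, and one finds $D_i^2=0$, $D_i\cdot D_j=4-3=1>0$, so the \emph{proof} of Theorem \ref{forme} still goes through with these corrected intersection numbers --- whereas its \emph{statement}, whose computation $D_i\cdot D_j=d^2$ tacitly presupposes that no two of the curves $F_i=0$ meet on $\{G=0\}$, does not. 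So your instinct about where the resolution lies was sound, but making it precise requires re-running the blow-up argument and invoking \cite{cz1} directly rather than citing Theorem \ref{forme}.
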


So, while the application of the $S$-unit theorem requires the removal of all the ten conics from $\tilde{X}$ to ensure degeneracy of integral points, Theorem \ref{forme} (via the above corollary) will provide the same conclusion after removing just five of them.

Actually we could prove that if the hyperplane sections are sufficiently generic (but still reducible), all but finitely many integral points lie in the union of up to eleven lines. Also, such infinite families cannot be avoid, since any such surface contains eleven lines with at most two points at infinity.

  Finally, let us note that the quasi-projective variety $X$ in Corollary \ref{DelPezzo} is never affine; as previously remarked, some questions about divisibility of values of polynomials in two variables amount to problems on distribution of integral points on surfaces which are neither projective nor affine.  
\medskip

Let us present a further result on Del Pezzo surfaces of degree four: recall that by  the $S$-unit equation theorem, the integral points on the complement of four pairwise linearly equivalent divisors on a surface are degenerate. Again, when one of them is reducible, we can improve on this classical result, obtaining:

\begin{theorem}\label{DelPezzo2}
 Let $\tilde{X}\subset\Pp_4$ be a Del Pezzo surface of degree four. Let $H_1,\ldots,H_4$ be hyperplane sections, such that no three of them intersect. Suppose one of them, say $H_4$, is reducible, so $H_4=\cc+\cc^\prime$, for two curves $\cc,\cc^\prime$ (either two conics, or a cubic and a line). Then the integral points on $X:=\tilde{X}\setminus(H_1\cup H_2\cup H_3\cup \cc)$ are degenerate.
\end{theorem}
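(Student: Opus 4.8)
The plan is to exploit the fact that only one of the two components of the reducible section $H_4=\cc+\cc'$ is removed, and to reduce the problem to an application of the Subspace--Theorem machinery of \cite{cz1} (the same circle of ideas underlying Theorems \ref{divisione} and \ref{forme}). Recall that every hyperplane section of $\tilde X$ is anticanonical, so $H_1\sim H_2\sim H_3\sim H_4=\cc+\cc'$; the classical way to treat the full four--section complement $\tilde X\setminus(H_1\cup H_2\cup H_3\cup H_4)$ is to observe that the three functions $\ell_i/\ell_4$ (ratios of the linear forms cutting the $H_i$) are $S$-units and to invoke the $S$-unit theorem. Our task is precisely to keep the component $\cc'$ inside $X$, which destroys the unit property of $\ell_i/\ell_4$; the point of the proof is to recover enough of it.

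First I would record the $\nu$-adic behaviour of the linear forms. Let $\ell_1,\dots,\ell_4$ cut out $H_1,\dots,H_4$, fix $P\in X(\OS)$ and choose an $S$-primitive representative of $P$. For a place $\nu\notin S$, integrality forces $P$ to avoid $H_1,H_2,H_3$ and $\cc$ modulo $\nu$; since $\ell_i$ vanishes exactly on $H_i$ for $i\le 3$ while $\ell_4$ vanishes on $H_4=\cc+\cc'$, one gets $\nu(\ell_i(P))=0$ for $i=1,2,3$, whereas
\[ \nu(\ell_4(P))=(\text{order of contact of }P\text{ with }\cc'\text{ at }\nu)\ \ge\ 0.\]
Thus $\varepsilon_i:=\ell_i(P)\in\OS^*$ are $S$-units for $i=1,2,3$, while $d:=\ell_4(P)\in\OS$ is an $S$-integer vanishing (modulo $\nu$) exactly where $P$ meets the retained curve $\cc'$. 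Equivalently, the morphism $\Phi=[\ell_1:\ell_2:\ell_3:\ell_4]\colon\tilde X\to\Pp_3$ --- a morphism because no three of the $H_i$ meet on $\tilde X$ --- sends $P$ to $(\varepsilon_1:\varepsilon_2:\varepsilon_3:d)$, an $S$-integral point of the image surface with respect to the first three coordinate planes, the fourth being only partially removed. The same computation applies verbatim when $\{\cc,\cc'\}$ is a cubic and a line.

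Next I would feed in the equation of $\Phi(\tilde X)$. Since $\Phi$ maps $\tilde X$ onto a surface in $\Pp_3$, hence onto a hypersurface, the four forms satisfy a single homogeneous relation $R(\ell_1,\ell_2,\ell_3,\ell_4)=0$; substituting the above yields $R(\varepsilon_1,\varepsilon_2,\varepsilon_3,d)=0$. Grouping by powers of $d$ rewrites this as a vanishing sum whose terms are products of a power of the single $S$-integer $d$ with an $S$-unit monomial in $\varepsilon_1,\varepsilon_2,\varepsilon_3$. This is exactly the type of divisibility/unit relation controlled by the Main Theorem of \cite{cz1}, which I would apply to $\tilde X$ with boundary divisor $D=H_1+H_2+H_3+\cc$: the general--position hypotheses (no three of $H_1,H_2,H_3,\cc$ meeting, transversality of $\cc$ with the $H_i$ and with $\cc'$) are precisely what is needed to verify its intersection--theoretic inequality. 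The conclusion is the non-density of $X(\OS)$, with the exceptional locus consisting of finitely many lines of $\tilde X$ together with the retained curve $\cc'$.

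The main obstacle is the retained component $\cc'$. Because $\cc'$ is not removed, $X$ is neither affine nor projective and an integral point is free to reduce into $\cc'$; consequently the natural map to a torus is spoiled and the fourth coordinate $d$ is only an $S$-integer rather than an $S$-unit. The heart of the argument is therefore to show that this single ``integral'' direction --- the divisibility by $d$ measuring contact with $\cc'$ --- does not overwhelm the Subspace--Theorem estimate, i.e. that removing just one component $\cc$ of the pencil of conics through $\cc'$ (equivalently, one bisection of the conic bundle defined by $|\cc'|$) still forces degeneracy. Verifying the resulting numerical inequality for the asymmetric boundary $H_1+H_2+H_3+\cc$, in contrast with the symmetric four--section situation handled by the $S$-unit theorem, is where the real work lies.
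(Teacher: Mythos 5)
Your final step --- apply the Main Theorem of \cite{cz1} (in the form of Theorem 1 of \cite{cz2}) to $\tilde{X}$ with boundary divisor $D=H_1+H_2+H_3+\cc$ --- is exactly the route the paper takes, but you never carry it out: you end by saying that ``verifying the resulting numerical inequality\ldots is where the real work lies.'' That verification \emph{is} the entire content of the proof, so deferring it leaves the argument with its only substantive step missing. It is, moreover, immediate: all hyperplane sections of a Del Pezzo surface of degree four are linearly equivalent, so $H_i^2=H_i.H_j=4$ for $1\leq i,j\leq 3$; and $H_i.\cc\geq 1$ for $i=1,2,3$ since each $H_i$ is ample and $\cc$ is a curve on $\tilde{X}$. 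Together with the hypothesis that no three of $H_1,H_2,H_3,\cc$ share a point (inherited from the assumption on $H_1,\ldots,H_4$, as $\cc\subset H_4$), these are precisely the hypotheses of Theorem 1 of \cite{cz2}, which yields the degeneracy of the integral points on $\tilde{X}\setminus(H_1\cup H_2\cup H_3\cup\cc)$.

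The part of your proposal that does contain computations --- choosing primitive coordinates, extracting $S$-units $\varepsilon_i=\ell_i(P)$ and the $S$-integer $d=\ell_4(P)$, mapping by $\Phi=[\ell_1:\ell_2:\ell_3:\ell_4]$ onto a hypersurface $R=0$ in $\Pp_3$, and grouping $R(\varepsilon_1,\varepsilon_2,\varepsilon_3,d)=0$ by powers of $d$ --- is a detour that never connects to a theorem you can actually invoke. The Main Theorem of \cite{cz1} is not stated for ``vanishing sums of unit monomials times powers of an $S$-integer''; it is stated directly for integral points on a surface minus a divisor, under intersection-theoretic conditions on the components. Since the points in Theorem \ref{DelPezzo2} are \emph{by definition} integral points of $\tilde{X}\setminus(H_1\cup H_2\cup H_3\cup\cc)$, no arithmetic reformulation is needed at all; the paper uses Lemma \ref{riduzione} elsewhere precisely because those other problems (divisibility of polynomial values) are \emph{not} already in geometric form, whereas this one is. So the unit-relation setup should be discarded and replaced by the one-line intersection computation above.
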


Again, both results above admit the obvious analogue in complex function theory, i.e. the algebraic degeneracy of entire curves on $X$.
\medskip

As announced, Theorem 1 too is a corollary of a general result concerning divisibility of values of homogeneous forms at integral points. To state it, we shall adhere to the following standard notation concerning ideals in a ring: 
for  $S$-integers $\alpha_1,\ldots,\alpha_h\in\OS$, we shall denote by $(\alpha_1,\ldots,\alpha_h)$ the ideal they generate in the ring $\OS$. (We shall give soon-after another equivalent formulation, without mentioning ideals).

\begin{theorem}\label{ennagono}
 Let $n\geq 6$ be an integer, and for $i\in\Z/n\Z$, let $F_i$ be a linear form in three variables, such that any three of them are linearly independent. The set of points $(x:y:z)\in\Pp_2(k)$  satisfying,   for each $i\in\Z/n\Z$, the  equality of ideals
\begin{equation}\label{ideali}
F_i(x,y,z)\cdot(x,y,z)  =\big(F_{i-1}(x,y,z),F_{i}(x,y,z)\big)\cdot\big(F_{i}(x,y,z), F_{i+1}(x,y,z)\big) 
\end{equation}
is not Zariski dense in $\Pp_2$.
\end{theorem}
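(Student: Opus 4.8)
The plan is to translate the ideal identities \eqref{ideali} into statements about valuations, to read off from those that the points in question are exactly the $S$-integral points of a suitable blow-up of $\Pp_2$, and finally to invoke the Main Theorem of \cite{cz1}. To begin, I would fix for each $(x:y:z)\in\Pp_2(k)$ a representative with coordinates in $\OS$, and for a valuation $\nu\notin S$ set $v_i:=\nu(F_i(x,y,z))$ and $c:=\min(\nu(x),\nu(y),\nu(z))$. Since the $\nu$-order of an ideal is the minimum of the orders of its generators and is additive on products, \eqref{ideali} is equivalent, for every $\nu\notin S$, to $v_i+c=\min(v_{i-1},v_i)+\min(v_i,v_{i+1})$. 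As the $F_i$ have $S$-integral coefficients we have $v_i\ge c$, so putting $w_i:=v_i-c\ge0$ the relation becomes the purely combinatorial cycle identity
$$w_i=\min(w_{i-1},w_i)+\min(w_i,w_{i+1})\qquad(i\in\Z/n\Z).$$

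The next step is to analyse this system. Because any three of the $F_i$ are linearly independent, after enlarging $S$ by the finitely many primes dividing the relevant $3\times 3$ determinants one has $c=\min(v_i,v_j,v_l)$ for every triple, hence $\min(w_i,w_j,w_l)=0$; so for each $\nu$ at most two of the $w_i$ are positive. Feeding this back into the cycle identity, an elementary case check rules out a single positive value and shows that two positive values must sit at adjacent indices and be equal. Thus the only possibilities per $\nu$ are: all $w_i=0$, or exactly one adjacent pair with $w_{j-1}=w_j>0$. Writing $\ell_i:=\{F_i=0\}$ and $P_j:=\ell_{j-1}\cap\ell_j$ for the $n$ vertices of the configuration, this says that modulo $\nu$ the point either avoids every line, or specialises to a single vertex $P_j$ with equal orders of contact along $\ell_{j-1}$ and $\ell_j$.

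I would then reinterpret this geometrically. Let $\tilde X\to\Pp_2$ be the blow-up at $P_1,\dots,P_n$, with exceptional curves $E_j$, $H$ the pullback of a line, and strict transforms $\tilde\ell_i=H-E_i-E_{i+1}$; set $D:=\tilde\ell_1+\dots+\tilde\ell_n$. The equal-order condition $w_{j-1}=w_j$ is precisely the statement that the reduction lands on $E_j$ but away from the two points $E_j\cap\tilde\ell_{j-1}$ and $E_j\cap\tilde\ell_j$, while $w_i=0$ for all $i$ means the reduction avoids $D$ entirely. Hence the points satisfying \eqref{ideali} are exactly the $S$-integral points of $X:=\tilde X\setminus D$ (and enlarging $S$ only enlarges this set, so it is harmless for a non-density conclusion). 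One checks that the $n\ge 6$ components $\tilde\ell_i$ are $(-1)$-curves, that consecutive ones are now disjoint while non-consecutive ones meet transversally in the single point $\ell_i\cap\ell_l$, and that no three pass through a common point; so $D$ is in general position, with $D^2=n(n-4)$, $K_{\tilde X}\cdot D=-n$ and $K_{\tilde X}+D=(n-3)H-\sum_jE_j$.

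Finally I would apply the Main Theorem of \cite{cz1} to the pair $(\tilde X,D)$ to conclude that $X(\OS)$, and therefore the original solution set, is not Zariski-dense. The crux, and the step I expect to be the main obstacle, is to verify the numerical hypothesis of that theorem for this configuration: one must choose an ample divisor $A$ adapted to $D$ and estimate, through the filtration of $H^0(\tilde X,NA)$ by order of vanishing along the components $\tilde\ell_i$, the total expected vanishing against the degree. The resulting inequality, assembled from the intersection numbers recorded above, should hold exactly when $n\ge 6$; this both accounts for the hypothesis of the theorem and is consistent with the sharpness exhibited by the counterexamples for $n\le 5$. A secondary care point is the bookkeeping of the finitely many excluded primes in the second step and the verification that the degenerate points lying on the $P_j$ themselves do not affect Zariski density.
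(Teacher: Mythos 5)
Your first three steps are sound and essentially reproduce, in valuation-theoretic language, what the paper accomplishes through its Lemma \ref{riduzione}: the cycle identity $w_i=\min(w_{i-1},w_i)+\min(w_i,w_{i+1})$, the case analysis forcing either all $w_i=0$ or a single adjacent pair with $w_{j-1}=w_j>0$, and the identification of the solution set with the $S$-integral points of $\tilde{X}\setminus D$ are all correct (your version even gives an equivalence, where the paper only needs the implication from solutions to integral points). The intersection numbers you record (each $\tilde\ell_i$ a $(-1)$-curve, consecutive strict transforms disjoint, $D\cdot\tilde\ell_i=n-4$, $D^2=n(n-4)$) agree with the paper's.

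The gap is in your final step, and it is exactly the step where the hypothesis $n\geq 6$ enters. The Main Theorem of \cite{cz1} (in the generalized form of Theorem 2.1 of \cite{cz2}) has a concrete numerical hypothesis, not the condition you describe: the filtration of $H^0(\tilde{X},NA)$ by order of vanishing is the internal mechanism of that theorem's proof, not something its user must construct. What must be verified is: for suitable positive integers $p_i$ (here $p_i=1$ works), letting $\xi_i$ be the minimal positive root of $(D-\xi D_i)^2=0$, the inequality $2\xi_i D^2>(D.D_i)\,\xi_i^2+3p_iD^2$ holds for every component $D_i$ of $D$. The paper carries this out: the quadratic becomes $\xi^2+2(n-4)\xi-n(n-4)=0$, the inequality simplifies (using $\xi^2=n(n-4)-2(n-4)\xi$) to $\xi>(n^2-n)/(2(2n-4))$, and checking that the positive root satisfies this reduces to $7n^3-70n^2+207n-192>0$, which holds precisely for $n\geq 6$. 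You assert that the relevant inequality ``should hold exactly when $n\geq 6$'' but never derive or check it; since this computation is the entire content of the paper's Proposition \ref{ennagono2} and the only place $n\geq 6$ is used, your proof is incomplete at its crux. A secondary point you also leave unaddressed: the Main Theorem of \cite{cz1} as stated assumes $D$ ample, which can fail for special configurations (e.g.\ when all $n$ vertices $P_j$ lie on a conic $\cc$, one has $D.\cc=0$); the paper notes that ampleness may be replaced by the weaker condition that $D^2>0$ and $D$ is nef, and verifies $D.\cc\geq 0$ for every irreducible curve. Your appeal to ``general position'' of $D$ does not cover this.
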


In the above formula, by homogeneity one can suppose that $x,y,z$ are $S$-integers; otherwise, one can interpret the ideals in the above equation in the sense of fractional ideals of $k$.\smallskip

Let us now see another way of stating Theorem \ref{ennagono}, without mentioning ideals. 
Enlarging $S$ by adding a finite number of places, we  can ensure that the ring $\OS$ is a unique factorization domain, so it makes sense to define the greatest common divisor of two elements $\alpha,\beta\in \OS$ (as a generator of the ideal $(\alpha,\beta)$). Also, we can suppose that any three of the linear forms are independent modulo any place outside $S$ (since by hypothesis they are independent over $k$). Then, for coprime $S$-integers $x,y,z$, any three of the values $\varphi_i(x,y,z)$ will be coprime. Let us put for every $i\in\Z/ n\Z$,
$$
\beta_i=\gcd(F_i(x,y,z), F_{i-1}(x,y,z));
$$
then for $i\neq j$, the $S$-integers $\beta_i,\beta_j$ are coprime, in particular $\gcd(\beta_i,\beta_{i+1})=1$ so  that we can write 
$$
F_i(x,y,z)=\beta_i\beta_{i+1}\alpha_i,
$$
for an $S$-integer $\alpha_i$. Then Theorem \ref{ennagono} asserts the degeneracy of the points $(x:y:z)\in\Pp_2(k)$ such that $\alpha_i$ is a unit for all $i\in\Z/n\Z$.
\medskip

We shall see that Theorem 1 follows formally from the case $n=6$ of Theorem \ref{ennagono}, and that this and all other results presented so far  can be interpreted as the degeneracy of integral points on certain blow-ups of the projective plane; these facts, in turn, consist  of particular cases of Vojta's conjecture for surfaces. 
On the other hand, the analogous result for $n\leq 5$ does not hold (see Theorem \ref{ctennagono}), in accordance to the fact that the hypotheses of Vojta's conjecture also fail  in that case.

We state below the complex-analytic analogue of Theorem \ref{ennagono}:

\begin{analogo}{\bf \ref{ennagono} bis.}
 Let $n\geq 6$ be an integer and for each $i\in\Z/n\Z$ let $\varphi_i$ be a linear form in three variables, with complex coefficients, such that no three of them are linearly dependent. Let $f,g,h:\C\rightarrow\C$ be three entire functions without common zeros. Suppose the following holds: for every index $i\in\Z/n\Z$ and every zero $p\in\C$ of the holomorphic function $\varphi_i(f,g,h)$, $p$ is also a zero of either $\varphi_{i-1}(f,g,h)$ or $\varphi_{i-1}(f,g,h)$ and
$$
\ord_{p}\, \varphi_i(f,g,h) =\max\big(\ord_p\, \varphi_{i-1}(f,g,h), \ord_p\,  \varphi_{i+1}(f,g,h)\big).
$$
Then the meromorphic functions $f/h, g/h$ are algebraically dependent.
\end{analogo}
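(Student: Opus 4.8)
The plan is to run, in the Nevanlinna setting, the analogue of the argument behind Theorem \ref{ennagono}: the role played there by the Subspace Theorem (through the Main Theorem of \cite{cz1}) is taken here by Cartan's Second Main Theorem with truncated counting functions. Throughout write $\mathbf f=(f,g,h)\colon\C\to\C^3\setminus\{0\}$ (legitimate since $f,g,h$ have no common zero), let $F\colon\C\to\Pp_2$ be the induced holomorphic map, and let $T(r)$ be its characteristic function. Since the algebraic dependence of $f/h,g/h$ means exactly that $F(\C)$ is contained in an algebraic curve, I argue by contradiction and assume $F$ algebraically nondegenerate; in particular $F$ is then linearly nondegenerate, for otherwise $f,g,h$ satisfy a linear relation and we are done.

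First I would translate the order hypothesis into geometry. Because no three of the $\varphi_i$ are linearly dependent, any three of the lines $H_i:=\{\varphi_i=0\}\subset\Pp_2$ have empty common intersection, so at every point of $\Pp_2$ at most two of them meet; the only pairwise intersections that can carry a zero of some $\varphi_i(\mathbf f)$ are the $n$ ``consecutive'' vertices $P_i:=H_{i-1}\cap H_i$. The identity $\ord_p\varphi_i(\mathbf f)=\max\bigl(\ord_p\varphi_{i-1}(\mathbf f),\ord_p\varphi_{i+1}(\mathbf f)\bigr)$ then forces every zero $p$ of $\varphi_i(\mathbf f)$ to map to one of the two vertices $P_i,P_{i+1}$ lying on $H_i$, to be shared with exactly one neighbour, and to have there equal contact order with the two lines through that vertex. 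Consequently one obtains a factorization
\[
\varphi_i(\mathbf f)=a_i\,b_i\,b_{i+1},\qquad i\in\Z/n\Z,
\]
where $b_i$ is entire with zero divisor $F^{-1}(P_i)$ and $a_i$ is a nowhere vanishing entire function; this is the exact analogue of the ``$\alpha_i$ is a unit'' reformulation preceding the statement.

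Next I would pass to the surface $\pi\colon\tilde X\to\Pp_2$ obtained by blowing up the $n$ vertices $P_1,\dots,P_n$, with exceptional curves $E_i$ over $P_i$ and strict transforms $\tilde H_i=\pi^*H_i-E_i-E_{i+1}$. The equal-contact condition guarantees that the tangent direction of $F$ at each $P_i$ is transverse to both lines through it, so the lift $\tilde F\colon\C\to\tilde X$ meets each $E_i$ away from $E_i\cap\tilde H_{i-1}$ and $E_i\cap\tilde H_i$; hence $\tilde F$ avoids the boundary $\tilde D:=\sum_i\tilde H_i$ entirely. A direct computation gives $K_{\tilde X}+\tilde D=(n-3)\pi^*H-\sum_iE_i$, whose self-intersection equals $(n-3)^2-n=n^2-7n+9$; this is positive precisely when $n\ge6$, so for $n\ge6$ the log-canonical class $K_{\tilde X}+\tilde D$ is big. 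We are thus exactly in the range where Vojta's conjecture predicts degeneracy of entire curves on $\tilde X\setminus\tilde D$, and the point is that this prediction can here be made effective.

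The heart of the proof, and the step I expect to be the main obstacle, is to convert this bigness into a quantitative contradiction. Applying Cartan's theorem to the $n$ lines and using the sharing of zeros gives $(n-3)\,T(r)\le\sum_i N^{[2]}_i(r)+S(r)$, where $N^{[2]}_i$ is the $2$-truncated counting function of $\varphi_i(\mathbf f)$ and $\sum_iN^{[2]}_i=2\sum_i\nu^{[2]}_{P_i}$ with $\nu_{P_i}$ counting $F^{-1}(P_i)$; combined with the First Main Theorem this is consistent but not yet contradictory, reflecting the fact that a statement about mere lines cannot detect degeneracy onto a conic. The real work is therefore to run the truncated Second Main Theorem adapted to the big class $(n-3)\pi^*H-\sum_i E_i$ on $\tilde X$ --- equivalently, to control jointly the nowhere vanishing factors $a_i$ (through a generalized Borel lemma) and the shared factors $b_i$ --- so as to improve the bound to $\sum_iN^{[2]}_i(r)\le c\,T(r)+S(r)$ with $c<n-3$. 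Once this is achieved the two inequalities give $(n-3-c)\,T(r)\le S(r)=o(T(r))$, forcing $T$ to be bounded, i.e. $F$ constant, against the assumed algebraic nondegeneracy; this contradiction establishes the algebraic dependence of $f/h$ and $g/h$.
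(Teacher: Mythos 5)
Your geometric reduction is correct and matches the paper's: the factorization $\varphi_i(\mathbf f)=a_ib_ib_{i+1}$, the blow-up of the $n$ vertices $P_i$, and the observation that the order condition forces the lifted curve $\tilde F$ to omit the strict transforms $\tilde H_i$ are exactly the analytic transcription of Lemma \ref{riduzione} and of the deduction of Theorem \ref{ennagono} from Proposition \ref{ennagono2}. (One small inaccuracy: equality of the orders does not make the tangent direction of $F$ at $P_i$ transverse to the two lines --- the common contact order may well be $\ge 2$; what it gives, and what suffices, is that $\tilde F$ meets $E_i$ away from the two points $E_i\cap\tilde H_{i-1}$ and $E_i\cap\tilde H_i$.)

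The genuine gap is that the step you yourself call ``the heart of the proof'' is never carried out, and in the form you state it, it cannot be: bigness of $K_{\tilde X}+\tilde D$ is precisely the hypothesis of Vojta's (equivalently Green--Griffiths type) conjecture, which is open, and there is no known truncated Second Main Theorem that converts bigness of a log-canonical class into your desired bound $\sum_i N^{[2]}_i(r)\le c\,T(r)+S(r)$ with $c<n-3$. You correctly diagnose that Cartan's theorem applied naively to the $n$ lines is insufficient, but then leave the improvement as an unproved goal, so the central analytic input is missing. The paper closes exactly this gap not by an ad hoc adapted SMT but by invoking the analytic counterpart of the Main Theorem of \cite{cz1}, established by Levin \cite{l1} via Cartan's theorem, whose hypotheses are not bigness but explicit intersection-theoretic conditions; these are verified in Proposition \ref{ennagono2}: on the blow-up one has $D_i^2=-1$, $D_i.D_{i+1}=0$, $D_i.D_j=1$ for the remaining pairs, hence $D.D_i=n-4$ and $D^2=n(n-4)>0$ with $D$ nef, and the required inequality $2D^2\xi>(D.D_i)\xi^2+3D^2$ for the positive root $\xi$ of $\xi^2+2(n-4)\xi-n(n-4)=0$ reduces to $7n^3-70n^2+207n-192>0$, i.e. to $n\ge 6$. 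Replacing your unproven ``adapted Second Main Theorem'' step by this citation together with the numerical verification (which is insensitive to whether one works over number fields or with entire curves) would complete the proof; as written, it is a plan with its key step open.
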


Theorem \ref{ennagono} extends to forms $F_1,\ldots,F_n$ of arbitrary degree, provided some general position conditions are imposed.
\medskip

A last application of our methods concerns algebraic families of $S$-unit equa\-tions, of the kind treated by the authors in \cite{cz2} and by Levin in \cite{l2}; we fix three polynomials $f(T),g(T)$, $h(T)\in\OS[T]$ and consider the $S$-unit equation
\begin{equation}\label{sunits}
 f(t)u+g(t)v=h(t)
\end{equation}
to be solved in $S$-units $u,v$ and $S$-integers $t$. In \cite{cz2} the authors considered the linear case, where $\deg(f)=\deg(g)=\deg(h)=1$, while Levin treated in \cite{l2} the case when $\deg(f)+\deg(g)=\deg(h)$. In both cases, the statement was reduced to a question of integral points on the complement of a suitable divisor in the product $\Pp_1\times\Pp_1$. In the present work, we shall consider a different compactification, given by a Hirzebruch\footnote{The so-called Hirzebruch surfaces were introduced by Friederich Hirzebruch in [{\it Math. Annalen}, 1951]; later Aldo Andreotti realised that all minimal rational surfaces are of this kind.} surface (as it was done for a different diophantine problem in \cite{c}), and prove   

\begin{theorem}\label{teounits}
 Let  $f(T),g(T),h(T)$ be three polynomials of the same degree   with $S$-integral coefficients, without common zeros. There exists a finite set $\Phi\subset\OS^*$ such that for all solutions   $(t,u,v)\in\OS\times\OS^*\times\OS^*$ to equation (\ref{sunits}), one at least among $u,v,u/v$ belongs to $\Phi$. In particular, the solutions are not Zariski-dense in the surface defined by (\ref{sunits}). 
\end{theorem}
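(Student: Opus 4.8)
The plan is to realize the affine surface defined by (\ref{sunits}) as the complement of a divisor in a Hirzebruch surface $\bar V$, to apply the Main Theorem of \cite{cz1} to force the integral points into a finite union of curves, and finally to read off directly from those curves that one of $u,v,u/v$ must be constant. Set $d=\deg f=\deg g=\deg h$. I would homogenize the fibre equation $f(t)u+g(t)v=h(t)$ as $f(t)U+g(t)V-h(t)W=0$ inside $\Pp_1\times\Pp_2$, getting a divisor of bidegree $(d,1)$; since $f,g,h$ have no common zero, the projection to the $t$-line $\Pp_1$ has every fibre a line in $\Pp_2$, so it is a $\Pp_1$-bundle and $\bar V\cong\mathbb{F}_e$ for some $e$. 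The integrality conditions $u,v\in\OS^*$ and $t\in\OS$ translate into removing the divisor $D=D_u+D_v+D_\infty+F_\infty$, where $D_u=\{U=0\}$, $D_v=\{V=0\}$, $D_\infty=\{W=0\}$ are the three sections carrying respectively $u=0$, $v=0$, and $u=v=\infty$, and $F_\infty$ is the fibre over $t=\infty$.

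Next I would verify the hypotheses of the Main Theorem of \cite{cz1}. Each section meets a fibre once, and the pairwise intersections are $D_u\cdot D_v=d$ (over the zeros of $h$), $D_u\cdot D_\infty=d$ (zeros of $g$), $D_v\cdot D_\infty=d$ (zeros of $f$); the equality of the three degrees is exactly what forces the three sections to be numerically equal, with common self-intersection $d$. The absence of a common zero of $f,g,h$ guarantees that no three components of $D$ pass through one point, and for generic leading coefficients $F_\infty$ meets each section transversally at three distinct points. Writing the class of a section as $C_0+aF$ with $2a-e=d$, one computes $D+K_{\bar V}\equiv C_0+(a+d-1)F$, whose self-intersection is $3d-2>0$, so $D+K_{\bar V}$ is big. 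This is the positivity input the Main Theorem of \cite{cz1} requires, and its conclusion is that the integral points of $\bar V\setminus D$ are not Zariski-dense, hence lie on finitely many curves.

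It then remains to analyze those curves. By Siegel's theorem a curve $C\not\subset D$ carries infinitely many integral points only if it is rational and meets $D$ in at most two points. A general fibre meets $D_u,D_v,D_\infty$ in three distinct points and is therefore excluded; the only fibres meeting $D$ in exactly two points are those over a zero of $f$, $g$, or $h$, where two of the three sections collide. On the fibre over a zero $t_0$ of $f$ the equation forces $v=h(t_0)/g(t_0)$ to be constant; over a zero of $g$ it forces $u=h(t_0)/f(t_0)$ constant; over a zero of $h$ it forces $u/v=-g(t_0)/f(t_0)$ constant. Collecting these finitely many constants, together with the finitely many unit values of $u$ arising from the finitely many integral points lying on all other curves of the degenerate locus, into a finite set $\Phi\subset\OS^*$, yields precisely the assertion that for every solution one of $u,v,u/v$ lies in $\Phi$.

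The hard part I expect is the verification stage for \cite{cz1}: when $f,g,h$ are not separable the $3d$ pairwise contacts among the three sections are non-transverse, so one must blow these up to reach a normal-crossing boundary while checking both that the integral-point structure is preserved and that the bigness of $D+K$ survives the blow-ups. A secondary, but necessary, bookkeeping point is to confirm that no curve in the degenerate locus other than the three distinguished families of special fibres can support an infinite family of solutions escaping $\Phi$ (for instance, that no section $u/v=c$ with $c$ not of the form $-g(t_0)/f(t_0)$ does so). Finally, I would emphasize that the hypothesis $\deg f=\deg g=\deg h$ is used essentially in making $D+K_{\bar V}$ big, which is the structural reason the conclusion is sharp in the degrees.
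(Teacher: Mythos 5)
Your construction coincides with the paper's: the same compactification of the surface (\ref{sunits}) as a Hirzebruch surface fibred over the $t$-line, the same boundary divisor (the three sections $D_u, D_v, D_\infty$ plus the fibre $F_\infty$), and the same two-step plan (degeneracy via the Corvaja--Zannier machinery, then analysis of the exceptional curves). However, there are two genuine gaps. First, the criterion you invoke to apply \cite{cz1} is not the right one: bigness of $D+K_{\bar V}$ is the hypothesis of \emph{Vojta's conjecture}, not of the Main Theorem of \cite{cz1}; if bigness alone implied degeneracy of integral points, Vojta's conjecture for surfaces would be a theorem. The Main Theorem of \cite{cz1} (and Theorem 1.1 of \cite{cz2}, which is what the paper actually cites here) requires specific numerical conditions on the components of the boundary --- in the form used elsewhere in this paper, inequalities such as $2\xi_i D^2 > D.D_i\,\xi_i^2 + 3p_iD^2$ for the minimal positive roots $\xi_i$ of $(D-\xi D_i)^2=0$, or, in the \cite{cz2} form, three linearly (or numerically) equivalent divisors together with a suitable fourth one. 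Fortunately the data you did compute --- $D_u,D_v,D_\infty$ mutually equivalent with $D_i.D_j=d\geq 1$ for all $1\leq i,j\leq 3$, $F_\infty$ a fibre with $F_\infty^2=0$ and $F_\infty.D_i=1$, and no three components through a point --- is exactly what makes Theorem 1.1 of \cite{cz2} applicable, so the geometry is right; but the step as justified does not stand. (Your worry about blow-ups at non-transverse contacts is, by contrast, unnecessary: these theorems only require that no three components meet, not normal crossings.)

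Second, and more substantively, your endgame fails for horizontal curves. Siegel's theorem confines infinite families to rational curves meeting the boundary in at most two points, but such curves need not be fibres, and your assertion that ``all other curves of the degenerate locus'' carry only finitely many integral points is false in general. For instance, whenever $u_0f+v_0g=h$ identically for constants $u_0,v_0\in\OS^*$ (e.g.\ $f=T$, $g=T+1$, $h=2T+1$, $u_0=v_0=1$), the curve $\{(t,u_0,v_0):t\in\A^1\}$ lies on the surface, meets the boundary only in the fibre at infinity, and carries infinitely many integral points. What has to be proved is that on \emph{every} curve supporting infinitely many integral points one of $u,v,u/v$ is constant, including curves on which $t$ is non-constant. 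This is the content of the paper's Theorem \ref{linesHirzebruch}: after replacing $(u,v)$ by $(v/u,1/u)$ or $(u/v,1/v)$ if necessary, one may assume some pole of $t$ on $\Gm$ is not a pole of $u$ or $v$; writing $u=\lambda x^{-a}$, $v=\mu x^{-b}$ with $a,b\geq 0$ and comparing pole orders on the two sides of (\ref{sunits}) forces $\min\{a,b\}=0$, i.e.\ $u$ or $v$ constant. You flag exactly this point as ``bookkeeping,'' but it is the remaining mathematical content of the theorem; without it the finiteness of $\Phi$ does not follow.
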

\smallskip

It is easily seen that for every zero $t_0$ of $f(t)g(t)h(t)$, there exists an infinite family of solutions, up to enlarging $S$; for instance, if $f(t_0)=0$, just put $v=-h(t_0)/g(t_0)$ and take for $u$ any $S$-unit. For such families, either $u$ or $v$ or $u/v$ is fixed. In some cases, there might also exist infinite families with variable $t$.

\medskip

The next statements will show that the integral points on certain open surfaces are indeed Zariski-dense: they will prove that our Theorems \ref{1},  \ref{forme}, \ref{ennagono}  are  in a sense best-possible. Several other statement of the same flavour will be given in section \ref{controesempi}.  

We begin by showing that in Theorems \ref{1}, \ref{1}bis  one cannot omit neither the condition on the number of lines to remove, nor the condition that they lie on two planes: 

\begin{theorem}\label{ctex1}(Counter-example)
Let $\tilde{X}$ be a smooth cubic surface, defined over a number field $k$; let $L_1,\ldots, L_5$ be five lines lying on two hyperplane sections of $\tilde{X}$; put $X=\tilde{X}\setminus(L_1\cup\ldots\cup L_5)$. There exists  a finite extension $k^\prime$ of $k$ and a finite set $S$ of places of $k^\prime$ such that $X(\OS)$ is Zariski dense. Also, there exists a configuration of nine lines $L^\prime_1,\ldots,L_9^\prime$ and a Zariski-dense set of $S$-integral points on $\tilde{X}\setminus(L_1^\prime\cup\ldots\cup L_9^\prime)$.
\end{theorem}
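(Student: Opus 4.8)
I would prove both assertions by exhibiting, on $\tilde{X}$, a pencil of conics whose general member meets the removed lines in at most two points, and then spreading $S$-integral points along the fibres. For the five-line statement, write the two completely reducible hyperplane sections as $H_1=a_1+a_2+a_3$ and $H_2=b_1+b_2+b_3$, each a triangle of lines. Since $a_1+a_2+a_3\equiv b_1+b_2+b_3\equiv H$ (the hyperplane class) while $a_i\cdot H=b_j\cdot H=1$, one checks $a_i\cdot b_j=\delta_{ij}$ after relabelling. I may assume the removed lines are $a_1,a_2,a_3,b_1,b_2$, so that $b_3$ survives in $X$, and I would use the conic bundle $\pi\colon\tilde{X}\to\Pp_1$ cut out by the pencil of planes through $b_3$, with fibres the residual conics $C_t$. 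The intersection numbers $C_t\cdot b_3=2$, $C_t\cdot a_1=C_t\cdot a_2=1$, and $C_t\cdot a_3=C_t\cdot b_1=C_t\cdot b_2=0$ show that $a_1,a_2$ are sections of $\pi$ while $a_3,b_1,b_2$ are vertical; hence the general fibre meets the removed divisor only along $a_1,a_2$, in two points, and $C_t\cap X\cong\Gm$. Note that $b_1\cup b_2$ is an entire fibre (over the point $[H_2]$), whereas $a_3$ is a single component of the degenerate fibre over the plane $a_3+b_3+c$, with $c=H-a_3-b_3$.

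After a finite extension of $k$ and an enlargement of $S$ (so the whole picture has good reduction outside $S$ and $|S|\ge 2$), I would normalise the fibre coordinate $\lambda$ by $a_1=\{\lambda=0\}$, $a_2=\{\lambda=\infty\}$, so that an $S$-integral point on a fibre forces $\lambda\in\OS^*$. The removal of the whole fibre $b_1\cup b_2$ becomes the single $S$-unit condition $t-[H_2]\in\OS^*$, which still leaves infinitely many admissible $t=[H_2]+u$, $u\in\OS^*$. The delicate point is the component $a_3$: because both sections $a_1,a_2$ meet $a_3$ (all three lie in $H_1$), the marked points $\lambda=0,\infty$ both specialise onto $a_3$ in the degenerate fibre, so $a_3$ carries the generic value of $\lambda$ while $c$ sits at the node $\lambda=\lambda_N$. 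Thus integrality against $a_3$ is \emph{not} a second $S$-unit condition on the base — which, together with $t-[H_2]\in\OS^*$, would leave only finitely many $t$ by the $S$-unit theorem — but rather the congruence $\lambda\equiv\lambda_N\pmod{\nu}$ at each prime $\nu\mid(t-[t_3])$ outside $S$.

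The main obstacle is therefore arithmetic rather than geometric: I must show these congruences are solvable inside $\OS^*$ simultaneously with $\lambda\in\OS^*$ for infinitely many base values $u$, so that the resulting family of integral points is genuinely two-dimensional and Zariski-dense rather than confined to finitely many fibres. This amounts to an equidistribution statement for $S$-units — that for a positive proportion of $u\in\OS^*$ the node residue $\lambda_N$ modulo the prime-to-$S$ part of $t-[t_3]$ lies in the image of $\OS^*\to(\OS/\cdot)^*$ — and is where the real work lies; it is made possible by enlarging $S$ so that $\OS^*$ acquires large rank and so that $\lambda_N$ itself becomes an $S$-unit.

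For the nine-line statement I would first observe that a plane meets $\tilde{X}$ in a cubic curve and hence contains at most three of the twenty-seven lines, so that \emph{any} configuration of seven or more lines automatically fails to lie on two planes; it therefore suffices to produce one configuration of nine lines whose complement has dense integral points. For this I would again fix a line $L$ and its conic bundle $\pi_L$ and take for $L_1',\dots,L_9'$ nine lines meeting the general fibre in at most two points (for instance two sections together with seven lines meeting $L$, which are vertical); the same fibrewise spreading of $S$-units, handled as above, yields a Zariski-dense set of $S$-integral points, while by the counting remark these nine lines do not lie on two planes. This establishes at once that neither the number six nor the planarity hypothesis of Theorem \ref{1} can be weakened.
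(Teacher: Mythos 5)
Your geometric frame is essentially the paper's own: your conic bundle cut out by planes through $b_3$ coincides with the pencil of conics through four of the six blown-up points that the paper uses, and you correctly single out the two problematic degenerate fibres ($b_1\cup b_2$, entirely removed, and $a_3+c$, half removed). But the proof stops exactly at the point where the theorem has to be proved: the ``equidistribution statement for $S$-units'' you invoke to solve the congruences $\lambda\equiv\lambda_N\pmod{\nu}$ at primes $\nu\mid(t-t_3)$ is neither proved nor standard, so the five-line claim is not established. Moreover this difficulty is self-inflicted. By writing $t=[H_2]+u$, $u\in\OS^*$, you force $t$ to be an $S$-integer, i.e.\ you impose integrality at $t=\infty$ as well; only for that reason does a second unit condition at $t_3$ collapse to finitely many fibres. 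Since the fibre over $\infty$ is not removed, the correct requirement is integrality of the base point with respect to the divisor $\{t_b\}+\{t_3\}$ alone, i.e.\ $(t-t_b)/(t-t_3)\in\OS^*$, which gives a $\Gm$'s worth (infinitely many) of admissible fibres. For such fibres no congruence condition ever arises: the fibre never reduces to either bad fibre modulo a place outside $S$, so by the intersection-number argument (the fibre meets the removed divisor with total multiplicity $2$) its reduction meets the reduced removed divisor only at the reductions of $A$ and $B$; hence integrality with respect to $A+B$ on the conic already yields integrality with respect to all five lines. To produce infinitely many integral points on each such conic one then needs an arithmetic input you never supply, namely Beukers' theorem (the paper's Lemma~\ref{friz}: a conic minus two points carrying one $S$-integral point has infinitely many when $\OS^*$ is infinite), seeded by an evident integral point on each fibre. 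Your shortcut ``integral $\Rightarrow\lambda\in\OS^*$'' is not available as stated: $a_1-a_2$ is not principal, so any fibre coordinate $\lambda$ necessarily has nontrivial vertical zeros and poles (along curves such as $E_2,E_3$ which are not removed), and $\nu(\lambda)=0$ is neither implied by nor implies integrality. Finally, your normalization $a_i\cdot b_j=\delta_{ij}$ silently discards the case in which the two hyperplane sections share a line, so that the five removed lines are their entire union; that configuration has a different incidence structure and the paper treats it separately with another pencil.

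The nine-line part has a more decisive defect. The lines contained in fibres of $\pi_L$ are exactly the ten lines meeting $L$, and they pair up into the five degenerate fibres of $\pi_L$; hence any seven of them contain at least two, and possibly three, complete degenerate fibres. If three complete fibres are removed, the admissible base points are integral points of $\Pp_1$ minus three points, which are finite by the $S$-unit theorem, so the fibration method cannot give density; if only two are complete, you are left with three half-removed fibres, i.e.\ three copies of the unresolved congruence problem above. So your recipe either fails outright or multiplies the gap. The paper's nine-line example needs no fibration at all: take the six exceptional curves together with the three lines forming one hyperplane section; the complement is isomorphic to $\Pp_2$ minus three lines in general position, i.e.\ to $\Gm^2$, whose $S$-integral points are Zariski-dense as soon as $\OS^*$ is infinite.
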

Its complex-analytic analogue reads:

\begin{analogo}{\bf \ref{ctex1} bis.}
 Let $\tilde{X}$ be a smooth cubic surface, $L_1,\ldots, L_5$ be five lines on $\tilde{X}$, lying on two hyperplanes; there exists a holomorphic map $f:\C\rightarrow \tilde{X}$ omitting all the lines  $L_1,\ldots,L_5$  and having Zariski-dense image. Also, there exists a configuration of nine lines $L^\prime_1,\ldots,L_9^\prime$ and a holomorphic map $g:\C\rightarrow\tilde{X}$ omitting $L_1^\prime,\ldots,L_9^\prime$ and having Zariski-dense image.
\end{analogo}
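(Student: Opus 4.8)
The plan is to exploit the conic-bundle structure that a line induces on a cubic surface. Since $L_1,\dots,L_5$ lie on two tritangent planes, after relabelling I may assume $H_1\cap\tilde{X}=L_1+L_2+L_3$ and $H_2\cap\tilde{X}=L_4+L_5+L_6$, where $L_6$ is a sixth line that is \emph{not} removed. Projection from $L_6$ (equivalently the pencil $|H-L_6|$ of residual conics, $H$ the hyperplane class) gives a morphism $\pi:\tilde{X}\to\Pp_1$ whose general fibre $R_t$ is a smooth conic, with exactly five reducible fibres, and for which the retained line is a bisection ($L_6\cdot R_t=2$). Writing $D=L_1+\dots+L_5$ one computes $H_1+H_2=D+L_6=2H$, hence $K_{\tilde{X}}+D=-H+2H-L_6=H-L_6=R_t$, a class with $(H-L_6)^2=0$; thus $K+D$ is not big, in agreement with the failure of the hypotheses of Vojta's conjecture, and $X$ is not of log-general type.

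First I would read off the fibration $\pi:X\to B$ on the complement. The removed pair $L_4,L_5$ is one whole reducible fibre, so its base point disappears and $B\subseteq\Pp_1$ is the complement of at most two points; using that the five lines lie on two planes one checks that $L_1,L_2,L_3$ meet a general fibre in at most two points in total. Crucially, because $L_6$ is \emph{retained}, the two points $R_t\cap L_6$ are never deleted, so the general fibre of $X\to B$ is $\Pp_1$ minus the (at most two) points cut out by the removed \emph{sections} among $L_1,L_2,L_3$. These boundary points are traced out by honest sections of $\pi$, hence are individually single-valued over the base and cannot be interchanged by monodromy: the fibration is a \emph{split} $\Gm$-bundle over the locus $B^{\circ}$ of smooth fibres. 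Since a holomorphic $\Gm$-bundle over a non-compact Riemann surface is holomorphically trivial, over $B^{\circ}$ one has $X\cong\Gm\times B^{\circ}$, and in these coordinates the candidate entire curve is $z\mapsto(e^{z},\,t(z))$ with $t:\C\to B$ a surjective holomorphic map; algebraic independence of $e^{z}$ and $t(z)$ gives Zariski-density.

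The hard part will be that, by Picard's theorem, a non-constant $t:\C\to B$ is forced to be surjective onto $B$, so the curve necessarily meets the finitely many reducible fibres of $\pi$ that survive inside $X$, where the product coordinates degenerate. I would handle this by verifying that along such a fibre the curve lands on the node or on an interior point of a retained component, both of which lie in $X$, rather than on a removed line: the fibre coordinate $e^{z}$ never equals $0$ or $\infty$ (the values corresponding to the deleted sections), so $L_1,L_2$ are automatically avoided, and a translation $t\mapsto t+a$ or a rescaling of $e^{z}$ can be chosen so that the curve also misses the one or two deleted fibre-components. Checking that the map extends holomorphically across each surviving reducible fibre, i.e. that the scalar monodromy of the $\Gm$-bundle around it is compatible with a single-valued lift, is the delicate point; it is exactly here that retaining $L_6$ is essential, for removing a bisection would instead produce an order-two inversion monodromy on the two boundary points and obstruct the lift.

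For the configuration of nine lines I would keep the same mechanism and push it to its limit. Starting again from the conic bundle attached to a retained bisection, remove two entire reducible fibres (four lines), two sections (two lines), and one component from each of the three remaining reducible fibres (three lines), for nine lines in all; one checks that the general fibre is still $\Gm$ and that the base is $\Pp_1$ minus the two full-fibre points, hence again $\Gm$. As a holomorphic $\Gm$-bundle over $\Gm$ is still trivial, the same construction with $z\mapsto(e^{\alpha z},e^{z})$ for irrational $\alpha$ yields a Zariski-dense entire curve omitting the nine lines, after the identical verification that the curve crosses the three surviving reducible fibres inside $X$. The corresponding count, showing that the multiplicative fibration (and with it the construction) breaks down once a tenth line is removed, is what makes nine sharp and matches the arithmetic statement of Theorem \ref{ctex1}.
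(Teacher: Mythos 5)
Your starting point is the right one, and it is in fact the same fibration the paper uses: the pencil of conics through four of the six blown-up points that drives the proof of Theorem \ref{ctex1} is exactly the residual pencil $|H-L_6|$ of the retained line. But from there your argument has a genuine gap, located precisely at what you call ``the delicate point'': extending $z\mapsto(e^z,t(z))$ across the surviving degenerate fibres is not a deferrable verification, it is the entire content of the statement. A trivialization $X|_{B^\circ}\cong\Gm\times B^\circ$ is unique only up to multiplication by an arbitrary invertible holomorphic function on $B^\circ$; whether your map extends holomorphically into $X$ at a puncture (rather than drifting into a removed section, or failing to converge at all) depends on the winding of the chosen trivialization around that puncture, which you never control, and by Picard the map $t$ hits all but at most one of the punctures, so the problem must be solved at all of them simultaneously. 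The paper does not attempt this either: its arithmetic proof works fibre by fibre with Beukers' lemma (Lemma \ref{friz}), which has no analytic analogue (countably many entire curves, one in each conic, do not assemble into a single Zariski-dense entire curve), and for the analytic statement it appeals to the conic-fibration technique combined with the work of Buzzard and Lu \cite{bl} on holomorphic dominability by $\C^2$. You would have to either import that input or carry out the local analysis at each degenerate fibre yourself. A second gap is in your reduction: ``five lines lying on two hyperplanes'' also covers the case, treated separately in the paper, where the two tritangent planes share a line, so that the five lines are all of $H_1\cup H_2$ and no sixth retained line exists; there the shared line is a bisection of the relevant pencil and is removed, which is exactly the situation you yourself say obstructs the lift.

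Two of your structural claims are moreover false. First, the two removed sections $L_1,L_2$ are coplanar lines of $\tilde{X}$, hence they meet; over the image of $L_1\cap L_2$ the fibre of $X\to B$ is $\C$, not $\Gm$, so $X$ is not a $\Gm$-bundle over the locus of smooth fibres as claimed (this point is one more degenerate fibre your curve must cross). Second, the nine-line configuration you propose cannot exist in the form you need: for any two sections $s,s'$ of the conic bundle, the class $\delta=s'-s$ lies in $f^{\perp}\otimes\mathbb{Q}$, which is spanned by $f$ and the fibre components $F_i''$, so $\delta=qf+\sum_i c_iF_i''$ with $c_i=-\delta\cdot F_i''\in\{-1,0,1\}$ and $\delta^2=-\sum_i c_i^2$; hence two disjoint sections pass through different components in exactly two of the five reducible fibres, and two sections meeting once in exactly four (and these then meet, reintroducing the previous degeneration). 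Consequently the complement you describe is never a locally trivial $\Gm$-bundle over $\Gm$, and the identification with $\Gm\times\Gm$ underlying $z\mapsto(e^{\alpha z},e^z)$ is unjustified. The paper's nine-line proof is instead immediate and uses a different configuration: the strict transforms of $L(P_1,P_2)$, $L(P_3,P_4)$, $L(P_5,P_6)$ together with the six exceptional curves; since all six blown-up points lie on those three lines, the complement is isomorphic to $\Pp_2$ minus three lines in general position, i.e.\ to $\Gm^2$, where $z\mapsto(e^z,e^{\alpha z})$ with $\alpha$ irrational has Zariski-dense image.
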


This counter-example can be strengthened, using the work of Buzzard and Lu \cite{bl}; we could even prove that the complement of five lines in a smooth cubic surface is holomorphically dominated by $\C^2$.  

We now show that Theorem \ref{ennagono} is also best-possible, in the sense that it fails in the case $n\leq 5$:

\begin{theorem}\label{ctennagono}
 Let $n$ be an integer with $1\leq n\leq 5$;  let $F_1,\ldots,F_n$ be linear forms in three variables in general position, defined over a number field $k$. There exists a ring of $S$-integers $\OS$, for a finite set of places $S$, such that the set of integers $(x,y,z)\in\OS^3$ satisfying the relation (\ref{ideali}) is Zariski dense (in $\Pp_2$ or, equivalently, in $\A^3$).
\end{theorem}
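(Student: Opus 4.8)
The plan is to pass to the geometric reformulation used throughout the paper, namely to interpret the solutions of (\ref{ideali}) as the $S$-integral points of a specific blow-up of $\Pp_2$, and then to exhibit on that surface a positive-dimensional family of rational curves carrying infinitely many integral points. Write $\ell_i=\{F_i=0\}\subset\Pp_2$ and let $Q_i=\ell_i\cap\ell_{i+1}$ (indices in $\Z/n\Z$) be the $n$ ``cyclically adjacent'' nodes; these are distinct because the forms are in general position. As explained earlier in the paper, the divisibility conditions (\ref{ideali}) say exactly that the reduction of $(x:y:z)$ modulo any place outside $S$ meets a line $\ell_i$ only at one of the two adjacent nodes $Q_{i-1},Q_i$ lying on it. Hence, letting $\pi\colon\tilde X\to\Pp_2$ be the blow-up at $Q_1,\dots,Q_n$, with exceptional divisors $E_1,\dots,E_n$ and $H=\pi^*(\text{line})$, the solutions are precisely the $S$-integral points of $X:=\tilde X\setminus D$, where $D=\tilde\ell_1+\dots+\tilde\ell_n$ and $\tilde\ell_i=H-E_{i-1}-E_i$ is the strict transform of $\ell_i$. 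So it suffices to produce a Zariski-dense set of $S$-integral points on $X$ (after possibly enlarging $k$ and $S$).

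First I would record the relevant intersection numbers on $\mathrm{Pic}(\tilde X)=\langle H,E_1,\dots,E_n\rangle$, with $H^2=1$, $E_i^2=-1$, $H\cdot E_i=0$. Then $D=nH-2\sum_iE_i$ and $D+K_{\tilde X}=(n-3)H-\sum_iE_i$ has self-intersection $(n-3)^2-n$, which is negative precisely for $n\le5$; this is the numerical reason the construction below succeeds exactly in the range of the theorem (and fails for $n\ge6$, where Theorem \ref{ennagono} applies). The construction itself uses conics: I look for an irreducible curve class $C=dH-\sum_im_iE_i$ that is rational and moves in at least a pencil, while meeting $D$ in at most two points, i.e. $C\cdot\tilde\ell_i\ge0$ for all $i$ and $C\cdot D\le2$. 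A curve $\tilde C$ of such a class with $\tilde C\cdot D=0$ is a complete rational curve inside $X$, all of whose $k$-points are automatically $S$-integral; a curve with $\tilde C\cdot D=2$ gives $\tilde C\cap X\cong\Gm$, whose $S$-integral points are $S$-units, hence Zariski-dense in $\tilde C\cap X$ after enlarging $S$.

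Concretely, for $n\le4$ I would take $C=2H-\sum_{i}E_i$, the strict transforms of the conics through all of the (at most four) nodes $Q_i$; one computes $C\cdot\tilde\ell_i=0$ for every $i$, so $C\cdot D=0$, and these conics sweep out $X$ in a family of dimension $\ge1$ consisting of complete rational curves contained in $X$. For $n=5$ the single conic through all five nodes is rigid, so instead I would use the pencil of conics through four of them, $C=2H-E_1-E_2-E_3-E_4$: here $C^2=0$, $C\cdot\tilde\ell_1=C\cdot\tilde\ell_5=1$ and $C\cdot\tilde\ell_i=0$ otherwise, so the general member meets $D$ transversally in exactly two points (one on $\tilde\ell_1$, one on $\tilde\ell_5$) and $\tilde C\cap X\cong\Gm$. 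In either case the chosen family of conics dominates $X$, and on a general member there are infinitely many $S$-integral points that are dense in that member; since distinct members lie on distinct curves, the union of all these integral points cannot be contained in a single curve and is therefore Zariski-dense in $X$. Pulling back through $\pi$ yields the desired Zariski-dense set of solutions of (\ref{ideali}); the easy cases $n\le2$, where (\ref{ideali}) forces only that $F_1(x,y,z)$ be an $S$-unit, are immediate.

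The routine part is the intersection-theoretic bookkeeping. The main obstacle is the geometric verification underlying the count: one must check, via Bertini and a local analysis over the exceptional divisors, that the general member of the chosen linear system is smooth and irreducible of genus $0$, that it does not acquire unexpected tangencies with the lines $\ell_i$ (so that $\tilde C$ really meets $D$ in the predicted number of reduced points and, for $n=5$, separates from $\tilde\ell_1,\tilde\ell_5$ on the blow-up rather than meeting them along $E_5$), and that the two boundary points can be made $k$-rational after a finite extension so that the $\Gm$ really carries $S$-units. Finally one has to confirm that the resulting two-parameter family is genuinely Zariski-dense, i.e. that the family of conics dominates the surface and that the integral points on its members are not forced onto a proper subvariety.
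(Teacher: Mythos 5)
Your geometric skeleton coincides with the paper's: the hard case $n=5$ is the paper's Theorem \ref{5}, proved exactly by blowing up the five nodes and using the pencil of conics through four of them, with the same intersection numbers you compute. But your argument has a genuine gap at its arithmetic core, namely the uniformity of $S$. The theorem requires a \emph{single fixed} ring $\OS$ whose set of solutions is Zariski-dense, and density forces you to use infinitely many members of the pencil simultaneously; yet both of your mechanisms for producing integral points on a member implicitly enlarge $S$ in a way that depends on the member. For $n\le 4$, the claim that a complete curve $\tilde C\subset X$ has ``all of its $k$-points automatically $S$-integral'' is false as stated: if the parameter $t$ of a conic in the pencil reduces, modulo some place $\nu\notin S$, to the parameter of a degenerate member such as $\ell_1+\ell_3$ or $\ell_2+\ell_4$, then $\tilde C_t$ modulo $\nu$ coincides with that degenerate fiber, every $k$-point of $\tilde C_t$ reduces into $D_\nu$, and $\tilde C_t$ carries \emph{no} $S$-integral point at all. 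So ``general member'' in the geometric sense is not enough: one must restrict to members whose parameter is itself an $S$-integral point of the parameter line minus the degenerate values (a $\Gm$); with that restriction the good-reduction argument does become uniform and settles $n\le4$. For $n=5$ the problem is more serious: your statement that the $S$-integral points of $\tilde C\cap X\cong\Gm$ ``are $S$-units, hence Zariski-dense after enlarging $S$'' conflates the integral structure induced on $\tilde C\cap X$ by the surface with the standard model of $\Gm$; these agree only away from a finite set of places \emph{depending on the member}. Enlarging $S$ member by member proves nothing, since the union over $t$ of sets of $S_t$-integral points is not a set of $S$-integral points for any fixed $S$.

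This is precisely the gap the paper closes with Lemma \ref{friz} (Beukers): for a fixed $S$ with $\OS^*$ infinite, the set of $S$-integral points of a smooth conic minus a degree-two divisor is either empty or infinite. Given that dichotomy, it suffices to exhibit \emph{one} integral point on each good member, which the paper does: a base point of the pencil lying off $\ell_1\cup\ell_5$ (e.g.\ the node $Q_2$ or $Q_3$) stays off the two boundary points modulo every place outside $S$, by good reduction of the configuration, hence is an integral point of $\cc_t\setminus\{A_t,B_t\}$ for every good $t$. Your proposal contains neither this lemma nor any substitute for it, and never produces a single integral point on a member for a fixed $S$. A substitute in the spirit of your ``two-parameter family'' remark would be: the total space of the pencil over the parameter $\Gm$, minus the two boundary sections, is a $\Gm$-torsor over an affine rational base, hence trivial, giving a dominant $k$-morphism $\Gm\times\Gm\to X$; spreading this morphism out over $\OS$ \emph{once}, the points with unit coordinates map to $S$-integral points of $X$, which are Zariski-dense. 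Either that uniform device or Lemma \ref{friz} must be supplied; as written, your proof does not establish the theorem. (Two minor points: the boundary points $A_t,B_t$ are automatically $k$-rational, being degree-one intersections of $k$-curves, so no field extension is needed there; and your reduction of (\ref{ideali}) to integrality on the blow-up uses the converse implication of Lemma \ref{riduzione}, which the paper asserts but leaves unproved.)
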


To summarise: we dispose of three main ``non-divisibility theorems'',  namely Theorems 2, 4, and 6, stating that certain divisibility conditions are satisfied only in proper Zariski-closed sets of the plane. These results formally imply some theorems on integral points on surfaces: Corollary 1 (on the simply connected surface) follows from Theorem 2; Corollary 2 (on Del Pezzo surfaces of degree four) from Theorem 4; Theorem 1 (on cubic surfaces) follows from Theorem 6. 

One extra result (Theorem 5) is   stated in terms of $S$-unit equations with parametric coefficients.

The rest of the paper is organized as follows: In the next section we first show how divisibility and integrality on quasi-projective surfaces are related, and then prove our main diophantine theorems, the degeneracy of integral points on certain surfaces.

In \S \ref{geometrica} we prove our claims of geometric type, for instance the simply-con\-nected\-ness of the surface appearing in Corollary \ref{rettescoppiate}, and the determination of the Albanese variety of the surface appearing in Theorem 1.  We also classify the infinite families of solutions to the $S$-unit equation (\ref{sunits}).

Finally, the last section contains results in the opposite direction: we shall prove that integral points on certain surfaces form a Zariski-dense set, thus proving that some of our degeneracy results are in a sense best-possible: for instance, one cannot omit the hypothesis appearing in Theorem \ref{1} that the six lines at infinity lie on two hyperplanes, and one cannot improve on the hypothesis $n\geq 6$ in Theorem \ref{ennagono}.

\section{Proofs of main results}\label{dimostrazioni}

In this section, we shall prove the degeneracy of integral points on the surfaces considered in the introduction,  the  Theorems \ref{divisione} and \ref{forme} on divisibility and Theorem \ref{teounits} on parametric $S$-unit equations. We shall also show the relations between them. The corresponding results 
on entire curves can be proved in the same way, by replacing Schmidt's Subspace Theorem by Cartan's second Main Theorem, as explained by Levin \cite{l1}. Hence, we shall omit the proofs in the analytic setting, and work only over number fields.

In the sequel, $k$ will be a fixed number field, $S$ a finite set of places of $k$ containing the archimedean ones, $\OS$ (resp. $\OS^*$) will denote the ring of $S$-integers (resp. group of $S$-units) of $k$. We recall that for every non-archimedean place $\nu$ of $k$, with residue field $k_\nu$, there is a well-defined reduction map $\Pp_n(k)\rightarrow\Pp_n(k_\nu)$.
In particular, for a projective variety $\tilde{X}\subset\Pp_n$, defined over $k$, and a rational point $P\in\tilde{X}(k)$, we can speak of the reduction of $P$ modulo $\nu$. 
If $D\subset\tilde{X}$ is a closed subvariety, we can always define in two different ways its reduction modulo $\nu$ (which will be a variety defined over the residue field $k_\nu$ of $\nu$) as follows: (1)  Set-theoretically: extend in some way the valuation $\nu$ to the algebraic closure $\bar{k}$ of $k$ and consider the set of the reduction of the points $P\in D(\bar{k})$; it is the set of points $\bar{k_\nu}$-rational of an algebraic  variety $D_\nu$ defined over $k_\nu$. (2) Alternatively, note that all but finitely many places $\nu$, one can simply reduce modulo $\nu$ the coefficients of a given system of equations for $D$, obtaining a system of equations for $D_\nu$. Since we shall consider only finitely many subvarieties $D\subset \tilde{Y}$, and in each of our statement we are allowed to disregard a fixed but arbitrary finite set of places of $k$, one can always refer to the reduction modulo $\nu$ in the second  sense. 

Given a hypersurface $D\subset\tilde{X}$ defined over $k$, we say that a rational point $P\in\tilde{X}\setminus D(k)$ is $\nu$-integral if its reduction modulo $\nu$ does not lie in the reduction of $D$ modulo $\nu$; we say it is $S$-integral if it is $\nu$-integral for every place $\nu$ {\it outside} $S$.
\smallskip

The main tool for reducing questions of divisibility between values of polynomials to integrality for points on rational varieties is represented by the following lemma:

\begin{lemma}\label{riduzione}
 Let $\tilde{Y}\subset\Pp_N$ be a smooth projective surface defined over $k$, $\cc_1,\cc_2$ be curves on $\tilde{Y}$ intersecting transversely at points $P_1,\ldots,P_n$, also defined over $k$. Let $\varphi,\psi$ be rational functions on $\tilde{Y}$, defined over $k$, such that local equations (on an affine open set $U\subset\tilde{Y}$) for $\cc_1$ (resp. $\cc_2$) are given by $\varphi=0$ (resp. $\psi=0$). Let $\pi:\tilde{X}\rightarrow\tilde{Y}$ be the blowup of $\tilde{Y}$ over the points  $P_1,\ldots,P_n$;   for $i=1,2$ denote by $\hat{\cc_i}$,  the strict transform of $\cc_i$ under $\pi$. Let $\nu$ be a non-archimedean place of $k$ such that the following conditions are satisfied:

\begin{itemize}
\item{  the reductions modulo $\nu$ of $\cc_1,\cc_2$ intersect transversaly;}

\item{ the  reductions modulo $\nu$ of the functions $\varphi,\psi$ induce local equations for the reductions of $\cc_1,\cc_2$;}

\item{ the blow-up map $\pi$ induces an isomorphism modulo $\nu$ of the complement of the exceptional divisors in $\tilde{X}$ with the complement of $\{P_1,\ldots,P_n\}$ in $\tilde{Y}$.}
\end{itemize}

\noindent Then for every point $P\in\subset\tilde{Y}(k)$ lying in the domain $U$ of $\phi$ and $\psi$, not in $\cc_1\cup\cc_2$, the following are equivalent:
\begin{enumerate}
 \item $\nu(\varphi(P))\leq\nu(\psi(P))$

\item $\pi^{-1}(P)$ is $\nu$-integral with respect to $\hat{\cc_1}$.
\end{enumerate}

\end{lemma}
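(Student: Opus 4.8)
The plan is to reduce the global assertion to a purely local analysis around the centers $P_1,\ldots,P_n$, using that $\pi$ is an isomorphism away from the exceptional divisors. Since $P\notin\cc_1\cup\cc_2$ we have $\varphi(P)\neq 0\neq\psi(P)$, and as $P$ reduces into the open set $U$ on which both functions are regular, both $\nu(\varphi(P))$ and $\nu(\psi(P))$ are $\geq 0$. The first step is to record the dictionary between reductions and these valuations: $P_\nu\in\cc_{1,\nu}$ iff $\nu(\varphi(P))>0$, and $P_\nu\in\cc_{2,\nu}$ iff $\nu(\psi(P))>0$. Consequently, using the hypothesis that $\cc_1,\cc_2$ and their local equations $\varphi,\psi$ reduce well modulo $\nu$, the set $\{P_{1,\nu},\ldots,P_{n,\nu}\}$ is exactly $\cc_{1,\nu}\cap\cc_{2,\nu}$, and $P_\nu$ coincides with one of the reduced centers precisely when both valuations are strictly positive.

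The argument then splits into two cases. If $P_\nu\notin\{P_{1,\nu},\ldots,P_{n,\nu}\}$, the third hypothesis places $\pi^{-1}(P)$ into the locus where $\pi$ is an isomorphism modulo $\nu$, so $\pi^{-1}(P)$ lies on $\hat{\cc_1}$ modulo $\nu$ iff $P_\nu\in\cc_{1,\nu}$, i.e. iff $\nu(\varphi(P))>0$. In this case at least one of the two valuations vanishes: if $\nu(\varphi(P))=0$ then both statements (1) and (2) hold, while if $\nu(\varphi(P))>0$ then necessarily $\nu(\psi(P))=0$ and both fail. Hence (1) and (2) are equivalent here.

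The heart of the matter is the case $P_\nu=P_{i,\nu}$, where both valuations are positive and $P$ reduces onto the exceptional divisor $E_i$. Here I would use that, by transversality, $(\varphi,\psi)$ is a local coordinate system at $P_i$, so the blow-up is given by the two standard charts, with coordinates $(\varphi,\psi/\varphi)$ and $(\varphi/\psi,\psi)$; in the first chart $\hat{\cc_2}$ is cut out by $\psi/\varphi=0$ while $\hat{\cc_1}$ is absent (it meets $E_i$ only at $\psi/\varphi=\infty$), whereas in the second $\hat{\cc_1}$ is cut out by $\varphi/\psi=0$. Comparison of the two valuations decides which chart the reduction of $\pi^{-1}(P)$ falls into. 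If $\nu(\varphi(P))\leq\nu(\psi(P))$, then $\psi/\varphi$ is $\nu$-integral and $\pi^{-1}(P)$ reduces to a point of the first chart lying on $E_i$ at a finite value of $\psi/\varphi$, hence off $\hat{\cc_1}$; if instead $\nu(\varphi(P))>\nu(\psi(P))$, then $\varphi/\psi$ reduces to $0$, placing $\pi^{-1}(P)$ on the curve $\varphi/\psi=0$, i.e. on $\hat{\cc_1}$. This is exactly the asserted equivalence of (1) and (2).

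The main obstacle I anticipate is bookkeeping rather than conceptual: one must check that the three good-reduction hypotheses on $\nu$ genuinely transport the two charts, the exceptional divisor, and the strict transform $\hat{\cc_1}$ to their analogues over the residue field $k_\nu$, so that the local picture computed generically is reproduced faithfully modulo $\nu$ — in particular that $\hat{\cc_1}$ meets the reduced $E_i$ in the single point $\psi/\varphi=\infty$. It is precisely here that the transverse reduction of $\cc_1,\cc_2$ and the reduction of $\varphi,\psi$ to local equations are used.
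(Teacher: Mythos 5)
Your proposal is correct and is essentially the paper's own argument: both proofs split according to whether $P$ reduces to one of the intersection points, use the transversality hypotheses to make $(\varphi,\psi)$ local coordinates at $P_i$, and then decide integrality with respect to $\hat{\cc_1}$ by comparing $\nu(\varphi(P))$ with $\nu(\psi(P))$ in an explicit local model of the blow-up. The only cosmetic differences are that the paper works with the homogeneous incidence equation $\varphi\,\xi_i=\psi\,\eta_i$ in $\tilde{Y}\times\Pp_1$ (where $\hat{\cc_1}=\{\eta_i=0\}$) instead of your two affine charts, and it only writes out the implication $(1)\Rightarrow(2)$, remarking that the converse follows by the same reasoning; the good-reduction bookkeeping you flag at the end is treated at the same informal level in the paper.
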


\noindent{\it Proof}. Let us suppose that 1.) holds; we want to prove that $\pi^{-1}(P)$ is $\nu$-integral with respect to $\hat{\cc_1}$. If $P$ does not reduce to $\cc_1$ modulo $\nu$, we are done (in fact $P$ is $\nu$-integral with respect to the divisor $\pi^*(\cc_1)$, which contains the strict transform $\hat{\cc_1}$). Then we can suppose that $\nu(\varphi(P))>0$;  by hypothesis 1.) we also have $\nu(\psi(P))>0$, so $P$ reduces modulo $\nu$ to an intersection point of $\cc_1\cap \cc_2$, say $P_i$. By the hypotheses  of the Lemma, the functions $\varphi,\psi$ are local parameters on the surface $\tilde{Y}$ at the point $P_i$. The blown-up surface can be defined locally at $P$ as the closed subset of $\tilde{Y}\times\Pp_1$ defined by the equation $\varphi(y)\xi_i=\psi(y)\eta_i$; the exceptional divisor $E_i$ is defined by $\varphi=\psi=0$ and $\hat{\cc_1}$ by $\eta_i=0$. We have to prove that $\pi^{-1}(P)$ does not reduce to $\eta=0$ modulo $\nu$. From the equality $\nu(\varphi(P))+\nu(\xi_i)=\nu(\psi(P))+\nu(\eta_i)$ and the inequality 1. it  follows that $\nu(\xi_i)\geq \nu(\eta_i)$, ie. $(\xi_i:\eta_i)$ is not congruent to $(1:0)$ modulo $\nu$, which, as remarked, is what we want. The other implication, which will not be used in the proof, can be proved by the same reasoning.
\qed

\medskip

We now prove Theorem \ref{ennagono}, from which we shall  deduce the first conclusion of Theorem \ref{1}.
Theorem \ref{ennagono} is equivalent to the following   

\begin{proposition}\label{ennagono2}
 Let $n\geq 6$ be an integer; for every index $i\in\Z/n\Z$, let  $H_i\subset\Pp_2$ be a line defined over $k$. Suppose that no three of them intersect. Let, for each index $i\in\Z/n\Z$, $P_i$ be the intersection point $H_i\cap H_{i+1}$. Let $\tilde{X}\rightarrow\Pp_2$ be the blow up of the plane over the points $P_1,\ldots,P_n$ and let $D_i\subset \tilde{X}$ be the corresponding strict transform of $H_i$. Finally put $D=D_1+\ldots+D_n$. Then no  set of $S$-integral points of $\tilde{X}\setminus D$ is Zariski-dense.
\end{proposition}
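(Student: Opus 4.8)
The plan is to read Proposition \ref{ennagono2} as the geometric incarnation of Theorem \ref{ennagono} — the bridge being exactly Lemma \ref{riduzione}, which turns the ideal-theoretic divisibility conditions into integrality of $\pi^{-1}(P)$ with respect to the strict transforms $D_i$ — and then to apply the Main Theorem of \cite{cz1} to the pair $(\tilde X,D)$ with $D=D_1+\cdots+D_n$. So the first task is purely geometric bookkeeping on $\tilde X$. Writing $h$ for the hyperplane class of $\Pp_2$ and $E_1,\dots,E_n$ for the exceptional curves over $P_1,\dots,P_n$, each line $H_i$ passes through precisely the two blown-up vertices $P_{i-1}=H_{i-1}\cap H_i$ and $P_i=H_i\cap H_{i+1}$, whence
\[
D_i=\pi^* h-E_{i-1}-E_i,\qquad D_i^2=-1 .
\]
Because the two vertices on $H_i$ are blown up, consecutive strict transforms become disjoint ($D_i\cdot D_{i+1}=0$), while non-consecutive ones still meet in a single transverse point ($D_i\cdot D_j=1$); the hypothesis that no three of the $H_i$ are concurrent guarantees these are the only intersections, so $D$ is simple normal crossings. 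I would record the two global classes
\[
D=n\,\pi^* h-2\sum_{j=1}^n E_j,\qquad K_{\tilde X}+D=(n-3)\,\pi^* h-\sum_{j=1}^n E_j .
\]

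Next I would feed $(\tilde X,D)$ into the filtration-plus-Subspace-Theorem machine of \cite{cz1}. Fix an auxiliary big and nef divisor; the configuration makes $A:=D$ the natural choice, since $A^2=n(n-4)$ and $A\cdot D_i=n-4$ both grow with $n$ (for $n=6$ one has $D=-2K_{\tilde X}$, twice the anticanonical of the weak Del Pezzo surface $\tilde X$, so big and nef is automatic). For each place $v\in S$ a $D$-integral point can be $v$-adically close to at most one component $D_{i(v)}$, by general position; one then filters $H^0(\tilde X,NA)$ by the order of vanishing along $D_{i(v)}$ and estimates, via Riemann--Roch, the total filtration weight
\[
W_N(D_i)=\sum_{m\geq 1}h^0\!\big(\tilde X,\,NA-mD_i\big)\sim\tfrac12\int_0^{m^*}(NA-mD_i)^2\,dm .
\]
The Subspace Theorem then forces the integral points into a proper subvariety as soon as $W_N(D_i)$, normalized against $\dim H^0(NA)\sim\tfrac12 A^2N^2$, exceeds the threshold coming from the degree; by symmetry the same weight occurs for every component.

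The numerical verification is where the bound $n\geq 6$ is born, and I expect it to be the main point. With $A=D$ one has $(NA-mD_i)^2=n(n-4)N^2-2(n-4)Nm-m^2$; for $n=6$ this divisor stays nef and big all the way up to the cut-off $m^*=2N$, so the integral is exact and gives $W_N(D_i)\sim\tfrac{20}{3}N^3$ against $\dim H^0(NA)\sim 6N^2$, i.e. a normalized weight $10/9>1$, whereas the same quantity is readily seen to be $<1$ for $n=5$. Thus the criterion of \cite{cz1} is met precisely for $n\geq 6$. This is no coincidence: the transition reflects the positivity of the log-canonical class, since $(K_{\tilde X}+D)^2=(n-3)^2-n$ is positive exactly for $n\geq 6$ and $K_{\tilde X}+D=(n-3)\pi^* h-\sum E_j$ is then effective, hence big — so the hypotheses of Vojta's conjecture switch on at the same threshold, matching the failure announced in Theorem \ref{ctennagono} for $n\leq 5$.

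The genuine obstacle is that the boundary components $D_i$ are $(-1)$-curves, so none of the off-the-shelf degeneracy theorems phrased for several nef or ample boundary divisors apply, and one must run the \cite{cz1} argument in full. Concretely: it is the negative self-intersection that forces $W_N(D_i)$ to be cut off at a finite $m^*$, and for $n\geq 7$ the divisor $NA-mD_i$ ceases to be nef before that point (it acquires negative intersection with the exceptional curves and with the $(-1)$-conics through five of the vertices), so a Zariski decomposition is needed to compute its volume correctly; one must also control the global sum over $v\in S$ when the approached component varies with $v$, and discard the finitely many places where a point is close to one of the nodes $D_i\cap D_j$. The remaining transversality and good-reduction requirements — the bullet hypotheses of Lemma \ref{riduzione} and the independence of the forms modulo $\nu$ — are harmless and can be absorbed by enlarging $S$.
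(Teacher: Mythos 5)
Your route is in substance the paper's own: compute the intersection numbers on the blow-up ($D_i^2=-1$, $D_i\cdot D_{i+1}=0$, $D_i\cdot D_j=1$ otherwise, hence $D\cdot D_i=n-4$, $D^2=n(n-4)$) and feed the configuration into the Main Theorem of \cite{cz1}, in the nef-and-big form of Theorem 2.1 of \cite{cz2}, with all weights $p_i=1$. Indeed, your ``normalized weight $>1$'' criterion is algebraically equivalent to the inequality $2\xi D^2>(D\cdot D_i)\xi^2+3D^2$, with $(D-\xi D_i)^2=0$, which is exactly what the paper checks: under that constraint one has $\int_0^{\xi}(D-tD_i)^2\,dt=\tfrac{1}{3}\bigl(2\xi D^2-\xi^2 D\cdot D_i\bigr)$, and for $n=6$ your $10/9$ is precisely the paper's $48>44$.

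The genuine gap is that you verify the criterion only for $n=6$, while the Proposition asserts degeneracy for every $n\geq 6$, and there is no formal reduction of $n\geq 7$ to $n=6$: the surfaces and boundaries differ (for the hexagonal configuration the point $H_6\cap H_1$ is blown up, for the $n$-gonal one it is not), so integral points for one configuration do not map to integral points for the other. Moreover, your diagnosis of $n\geq 7$ as ``the genuine obstacle'' requiring Zariski decompositions misreads the machine: the weight estimate is a \emph{lower} bound via Riemann--Roch, $h^0(NA-mD_i)\geq\tfrac12(NA-mD_i)^2+O(N)$, which needs only $h^2=0$ (true here, since $(K_{\tilde{X}}-NA+mD_i)\cdot D<0$ in the relevant range and $D$ is nef), not nefness of $NA-mD_i$; losing nefness only makes the bound non-sharp, it never invalidates it. So the correct completion of your argument is simply to evaluate $\int_0^{\xi}(D-tD_i)^2\,dt>D^2$ for general $n$, with $\xi$ the smallest positive root of $\xi^2+2(n-4)\xi-n(n-4)=0$; the paper does this and reduces it to $7n^3-70n^2+207n-192>0$, which holds precisely for $n\geq 6$. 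Until that computation is done, your proof covers only $n=6$. Two smaller slips: a $D$-integral point can be $v$-adically close to \emph{two} components (the nodes $D_i\cap D_j$, $j\neq i\pm1$, survive the blow-up), and the places where a point approaches a node cannot be ``discarded'' --- their height contribution must be absorbed, which is what the simultaneous two-divisor filtration in \cite{cz1} is for; and Lemma \ref{riduzione} plays no role in proving this Proposition --- it is the bridge in the opposite direction, from the Proposition to Theorem \ref{ennagono}.
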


\noindent{\it Proof}. We apply the Main Theorem of \cite{cz1}, in its generalized form given in \cite{cz2}, Theorem 2.1, hence we follow the notation of \cite{cz1}, \cite{cz2}. We take $r=n$ and $p_i=1$ for all $i=1,\ldots,n$. To check the validity of the hypothesis of the  Theorem 2.1 in \cite{cz2}, we have to compute the intersection matrix of the divisor $D=D_1+\ldots+D_n$. For this purpose, note that since each divisor $D_i$ is the strict transform of the line $H_i$ in $\Pp_2$, which passes through two blown-up points, its self intersection equals $1-2=-1$. Also, for $i\neq j$, $D_i$ intersects $D_j$ in one point unless $i,j$ are consecutive. Finally we have, for all $i\in\Z/n\Z$,  $D_i^2=-1$ , $D_i. D_{i+1}=0$; $D_i.D_j=1$ for $j\neq i,i+1,i-1$. Then $D.D_i=n-4$ and $D^2=n(n-4)$, so $D^2>0$, and $D.{\cal C}\geq 0$ for all irreducible curves $\cc$ in $\tilde{X}$, as required by the hypotheses of Theorem 2.1 in \cite{cz2}. We note that for a generic choice of the lines $H_i$ in the plane, $D$ will  be ample, but in some degenerate case it will not be so (for instance, if all the $n$ intersection points $H_i\cap H_{i+1}$ lie on a conic $\cc$, then $D.\cc=0$; of course this does not happen generically since $n\geq 6$). The equation for $\xi_i=: \xi$ becomes
\begin{equation}\label{xi}
\xi^2+2(n-4)\xi-n(n-4)=0.
\end{equation}
The inequality $2D^2\xi>D.D_i\xi^2+3D^2$, appearing in the hypothesis of the Main Theorem of \cite{cz1} (or Theorem 2.1 of \cite{cz2}), now reads 
\begin{equation*}
 2n(n-4)\xi>(n-4)\xi^2+3n(n-4).
\end{equation*}
After simplifying the common factor $(n-4)$ and using the equation (\ref{xi}) for expressing $\xi^2$ as $\xi^2=n(n-4)-2(n-4)\xi$, the  above inequality becomes
$ 2(2n-4)\xi>n^2-n,$ i.e.
\begin{equation*}
 \xi>{n^2-n\over 2(2n-4)}.
\end{equation*}
We have to prove that this inequality  is satisfied by the minimal positive solution (actually the only positive solution) of (\ref{xi}). Now, this amounts to prove that the value of the polynomial on the left side in (\ref{xi}), calculated at the point $(n^2-n)/2(2n-4)$, is negative, i.e.
$$
\left({n^2-n\over 2(2n-4)}\right)^2+2(n-4){n^2-n\over 2(2n-4)}<n(n-4).
$$
The above inequality simplifies to $7n^3-70n^2+207n-192>0$, which is easily seen to be equivalent (for positive integers $n$) to $n\geq 6$, conlcuding the proof.
\qed
\medskip

\noindent {\it Deduction of Theorem \ref{ennagono} from Proposition \ref{ennagono2}}.
Let $F_1,\ldots,F_n$ be the linear forms appearing in Theorem \ref{ennagono} and let, for $i\in\Z/n\Z$,  $H_i\subset \Pp_2$ be the line of equation $ F_i(x,y,z)=0$. By the hypotheses of Theorem \ref{ennagono} such lines are in general position. Let $\pi:\tilde{X}\rightarrow \Pp_2$ be the blow up defined in the above Proposition, $E_1,\ldots,E_n$    the corresponding exceptional curves, $D$ the  hypersurface defined in the statement of Proposition \ref{ennagono2}.  Note that $\pi$ is an isomorphism between $\tilde{X}\setminus (D\cup E_1\ldots\cup E_n)$ and $\Pp_2\setminus (H_1\cup\ldots\cup H_n)$.
As we mentioned in the introduction, we can enlarge the set $S$ of places in such a way that the reductions of $H_1,\ldots,H_n$ modulo  of every place outside $S$ are still in general position. Also, we can suppose that the isomorphism between the affine surfaces $\tilde{X}\setminus (D\cup E_1\ldots\cup E_n)$ and $\Pp_2\setminus(H_1\cup\ldots\cup H_n)$, given by the restriction of $\pi$, is defined over $\OS$, together with its inverse, so it induces an isomorphism modulo every place outside $S$.

Let now $(x,y,z)\in\OS^3$ be a point satisfying the relation (\ref{ideali}) and such that $F_i(x,y,z)\neq 0$ for all $i\in\Z/n\Z$. If we prove that such a point defines an $S$-integral point $\pi^{-1}(x:y:z)$ on $\tilde{X}$ with respect to $D$, we have finished the proof that Proposition \ref{ennagono2} implies Theorem \ref{ennagono}.  

To this end, we use Lemma \ref{riduzione}. Let $\nu$ be a place of $k$ outside $S$, fix an index $i\in\Z/n\Z$ and a point $P=(x:y:z)\in\Pp_2(k)$ satisfying (\ref{ideali}); we have to prove that the reduction of $\pi^{-1}(P)$ modulo $\nu$ does not lie on $D_i$. If $F_i(P)\not\equiv 0$ (mod $\nu$), then we have finished, since $\pi^{-1}(P)$ is integral even with respect to the whole pull-back $\pi^*(H_i)$. Then suppose $F_i(P)\equiv 0$ modulo $\nu$. By the relation (\ref{ideali}), there exists an index $j\in\{i+1,i-1\}$ such that $\nu(F_i(P))=\nu(F_j(P))$. Suppose for instance $j=i+1$; this means that $P$ reduces to $H_i\cap H_{i+1}$ modulo $\nu$. By the hypothesis that the forms $F_1,\ldots,F_n$ are in general position modulo $\nu$, for all  other indices $h\in\{1,\ldots,n\}\setminus\{i,i+1\}$, we have $\nu(F_h(P))=0$. Let us choose one such index, say $i-1$, and consider the rational functions $\varphi:=F_i/F_{i-1},\psi:=F_j/F_{i-1}=F_{i+1}/F_{i-1}$. Then  local equations for $H_i$ (resp. $H_{i+1}$) in a neighborhood of $H_i\cap H_{i+1}$ are given by $\varphi=0$ (resp. $\psi=0$). 
Lemma \ref{riduzione} applies, proving that $\pi^{-1}(P)$ is integral with respect to $D_i$; on the other hand, it is clear that it is integral with respect to $D_j$ for every $j\neq i$, due to our initial hypothesis that the lines $H_1,\ldots,H_n$ are in general position modulo every place outside $S$;  hence $\pi^{-1}(P)$ is integral with respect to the whole divisor $D$ and the proof is complete.
\qed 

\medskip

\noindent {\it Proof of Theorem \ref{1}}. We suppose the hypotheses of Theorem \ref{1} hold; in particular, $\tilde{X}\subset\Pp_3$ is a cubic surface defined over a number field, $H_1,H_2$ are two completely reducible hyperplane sections. It is well known (see \cite{beau}) that $\tilde{X}$ can be obtained by blowing up the plane at six points $P_1,\ldots,P_6$ in general position (no three on a line, no six on a conic); after enlarging the field of definition, we can suppose that such points are defined over the given number field $k$. The configuration of six lines lying on two planes can be obtained as the strict transform of the six lines on the plane connecting $P_i,P_{i+1}$ for $i\in\Z/6\Z$. Then apply Theorem \ref{ennagono} to obtain the degeneracy of the set of integral points considered in Theorem \ref{1}.  
\qed
\medskip

\noindent{\it Proof of Theorem \ref{teounits}}. Let us consider the affine surface defined by equation (\ref{sunits}); it admits a smooth compactification in $\Pp_2\times\Pp_1$ obtained in the following way: write $\tilde{f},\tilde{g},\tilde{h}\in k[T_0,T_1]$ for the forms of degree $d=\deg(f)=\deg(g)=\deg(h)$ satisfying $\tilde{f}(1,T)=f(T),\, \tilde{g}(1,T)=g(T),\,  \tilde{h}(1,T)=h(T)$. Then our surface is an open affine set of the surface defined in $\Pp_2\times\Pp_1$, with coordinates $((U:V:W),(T_0:T_1))$, by the bi-homogeneous equation
$$
\tilde{X}:\qquad U\cdot \tilde{f}(T_0,T_1)+V\cdot \tilde{g}(T_0,T_1)=W\cdot \tilde{h}(T_0,T_1).
$$ 
It is endowed with a canonical projection $\tilde{X}\rightarrow \Pp_1$ whose fibers are projective lines. It turns out to be isomorphic to the $d$-th Hirzebruch surface (see \cite{beau}), independently of the degree $d$ polynomials $f,g,h$ (provided they have no common zero, which is assumed here).
The solutions $(u,v,t)\in\OS^*\times\OS^*\times\OS$ to equation (\ref{sunits}) give rise to rational points $((u:v:1),(1:t))$ which are integral with respect to the   divisor  $D:\quad T_0\cdot UVW=0$.
It consists of four components $D_1,D_2,D_3$ and $D_4$, where  $D_4$ is given by $T_0=0$, so it is a fiber for the projection $\tilde{X}\rightarrow \Pp_1$, while $D_1,D_2,D_3$ are pull-backs of  lines in $\Pp_2$ (via the natural projection $\tilde{X}\rightarrow \Pp_2$).
Then $D_4^2=0$, $D_1,D_2,D_3$ are linearly equivalent and satisfy $D_i.D_j=d\geq 1$ for $1\leq i,j\leq 3$. We are in the situation of applying Theorem 1.1 of \cite{cz2} (or the Main Theorem of \cite{cz1}) to obtain the degeneracy of the set of solutions. Hence all but finitely many solutions lie on a finite set of curves, which, by Siegel's theorem on integral points on curves, are parametrised either by $\A^1$ or by $\Gm$. It is easy to see that there are no curves parametrised by $\A^1$ on the affine surface $\tilde{X}\setminus D$. Those parametrised by $\Gm$, i.e. of vanishing Euler characteristic, are classified by Theorem \ref{linesHirzebruch}; they give rise to the infinite families with fixed $u$, $v$ or $u/v$, and this completes the proof.\qed
\medskip

To prove Theorem  \ref{divisione}, we need yet another consequence of our Main Theorem from \cite{cz1}, which we  immediately state and prove:

\begin{proposition}\label{proposizione2}
 Let $\tilde{X}$ be a smooth projective surface, $D_1,\ldots,D_r,H$ be reduced and irreducible divisors on $\tilde{X}$, no three of them intersecting. Suppose there exist positive integers $p_1,\ldots,p_r,c,h$  such that for $1\leq i<j\leq r$, $p_iD_i.H=ch$, $p_ip_j D_i.D_j=c^2$ and $H^2=h^2$. Suppose moreover that $D_i^2=0$. If $r\geq 3$, then the integral points on $\tilde{X}\setminus(H\cup D_1\cup\ldots\cup D_r)$ are not Zariski-dense.
\end{proposition}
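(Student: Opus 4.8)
The plan is to deduce Proposition \ref{proposizione2} from the Main Theorem of \cite{cz1} (in the generalized form of Theorem 2.1 of \cite{cz2}), exactly as was done for Proposition \ref{ennagono2}: I would take the boundary divisor to be $H\cup D_1\cup\ldots\cup D_r$ and equip it with the multiplicities dictated by the numerical hypotheses. The relations $p_iD_i.H=ch$, $p_ip_jD_i.D_j=c^2$, $H^2=h^2$ and $D_i^2=0$ are precisely what is needed to normalize the configuration: setting $\hat H:=H/h$ and $\hat D_i:=p_iD_i/c$ (as $\mathbb Q$-divisors, to be cleared of denominators at the end by passing to $cH+\sum_i hp_iD_i$), one obtains the completely symmetric intersection data $\hat H^2=1$, $\hat D_i^2=0$ and $\hat H.\hat D_i=\hat D_i.\hat D_j=1$ for $i\neq j$.

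Next I would carry out the intersection bookkeeping for $D:=\hat H+\sum_{i=1}^r\hat D_i$, as in the proof of Proposition \ref{ennagono2}. A direct computation gives
$$
D^2=r^2+r+1,\qquad D.\hat D_i=r,\qquad D.\hat H=r+1 .
$$
In particular $D^2>0$; moreover $D$ is nef, since every self-intersection occurring here is $\geq 0$ and any irreducible curve meets each component non-negatively, so that $D.\cc\geq 0$ for all curves $\cc$. The general position hypothesis (no three of $H,D_1,\ldots,D_r$ meet) supplies the ``no three components through a point'' assumption of the Main Theorem. It then remains to check, for each component $\Gamma$, the inequality $2D^2\xi>(D.\Gamma)\xi^2+3D^2$ appearing in the hypothesis of the Main Theorem (cf.\ the computation following (\ref{xi})), where $\xi$ is the relevant root of $\Gamma^2\xi^2-2(D.\Gamma)\xi+D^2=0$.

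Here the hypothesis $D_i^2=0$ simplifies matters. For the components $\hat D_i$ the defining quadratic degenerates to the linear equation $2(D.\hat D_i)\xi=D^2$, so $\xi_{D_i}=D^2/\big(2(D.\hat D_i)\big)$, and after substitution the required inequality reduces to $D^2>4(D.\hat D_i)$, that is to
$$
r^2+r+1>4r\quad\Longleftrightarrow\quad r^2-3r+1>0\quad\Longleftrightarrow\quad r\geq 3,
$$
which is exactly the numerical threshold of the statement. The component $H$, for which $\hat H^2=1$, yields instead the genuine quadratic $\xi^2-2(r+1)\xi+(r^2+r+1)=0$, with minimal positive root $\xi_H=(r+1)-\sqrt r$, and I would verify separately that the corresponding inequality is also satisfied.

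The main obstacle I anticipate is precisely this last verification: checking the inequality of the Main Theorem for every component simultaneously, keeping track of the asymmetry between $H$ (with $H^2>0$) and the $D_i$ (with $D_i^2=0$). The delicate point is that the $\xi_i$ entering the theorem is not merely the free root of the above quadratic but the effective threshold at which $|ND-tD_i|$ leaves the Riemann--Roch regime; since all self-intersections here are non-negative one has some latitude in the choice of the multiplicities, and one must choose them (and the correct root, lying in the valid range) so that the ``ample-like'' component $H$ is also covered, while the binding constraint coming from the $D_i$ remains the clean condition $r^2-3r+1>0$ above, sharp at $r=3$. Once all these inequalities are in force, Theorem 2.1 of \cite{cz2} yields the non-density of the $S$-integral points on $\tilde X\setminus(H\cup D_1\cup\ldots\cup D_r)$.
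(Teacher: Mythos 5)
Your reduction to the Main Theorem of \cite{cz1} (Theorem 2.1 of \cite{cz2}) is the same strategy as the paper's, and your normalization and bookkeeping for the components $\hat{D}_i$ are correct ($\xi_{D_i}=D^2/(2D.\hat{D}_i)$, the inequality collapsing to $D^2>4D.\hat{D}_i$). But the step you deferred --- ``verify separately'' the inequality for $H$ --- is not a routine verification: with your choice of equal weights it is \emph{false} exactly in the essential case $r=3$. Indeed, for $r=3$ you have $D^2=13$, $D.\hat{H}=4$, $\xi_H=4-\sqrt{3}$, and the required inequality $2\xi_H D^2>(D.\hat{H})\xi_H^2+3D^2$ reads $26(4-\sqrt{3})>4(4-\sqrt{3})^2+39$, i.e. $104-26\sqrt{3}>115-32\sqrt{3}$, i.e. $6\sqrt{3}>11$, which fails since $108<121$. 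Your argument does go through for $r\geq 4$ (there $\xi_H=5-\sqrt{4}=3$ and $126>108$, etc.), but $r\geq 4$ follows from $r=3$ anyway by discarding components, so the case your weights miss is precisely the one carrying all the content of the proposition.

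The paper closes exactly this gap by \emph{not} giving $H$ the same normalized weight as the $D_i$: it sets $D=p_1D_1+p_2D_2+p_3D_3+pH$ with $p$ a free parameter, and the two families of constraints become (in your normalization $c=h=1$, weight $p$ on $\hat{H}$) the condition $p^2+2p>2$ from the $D_i$ and a cubic inequality from $H$. These pull in opposite directions --- lowering $p$ helps the $H$-inequality but hurts the $D_i$-inequality, whose threshold is $p>\sqrt{3}-1\approx 0.73$ --- and the admissible window is narrow and does not contain $p=1$; the paper takes $p=3c/(4h)$ (normalized weight $3/4$), for which both inequalities hold with little to spare ($33/16>2$ on the $D_i$ side). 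Finally, since $3c/(4h)$ need not be an integer, the paper rescales $p_1,p_2,p_3$ (which rescales $c$, with $h$ fixed) to restore integrality of all multiplicities --- a point your ``clear denominators at the end'' remark would also need, but which is harmless once the correct ratio of weights is identified. So your proposal is the right method with correct computations as far as it goes, but the unbalanced choice of multiplicities is not an optional refinement: it is the key point, and without it the proof does not cover $r=3$.
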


\noindent{\it Proof}. It is clear that the result follows from the particular case when $r=3$, so we shall suppose $r=3$. Again, we  apply the Main Theorem of \cite{cz1}. Let us denote by $D_{4}$ the divisor $H$ and let $p_{4}=p$ be a positive integer to be chosen later. Put 
\begin{equation*}
 D=p_1D_1+\ldots+p_4D_4=p_1D_1+p_2D_2+p_3D_3+ pH.
\end{equation*}
We shall now verify that the hypotheses of the Main Theorem in \cite{cz1} are satisfied.
First of all, let us remark again that the condition that $D$ is ample, appearing in the statement of the Main Theorem in \cite{cz1} (and which, in any case, will  be satisfied in the applications to Theorem \ref{divisione} and \ref{forme}) is not really needed, and can be replaced by the condition that $D^2>0$ and $D$ is nef; this fact appears explicitly in \cite{cz2} and \cite{l1}. 
 Let us compute the intersection products. For $i\leq 3$ we have:
\begin{equation}\label{dd_i}
 D.D_i=\frac{2c^2}{p_i}+\frac{p ch}{p_i},
\end{equation}
while for the intersection number $D.D_{4}=D.H$ we have
\begin{equation}\label{dh}
 D.H=3ch+p h^2.
\end{equation}
Then 
\begin{equation}\label{d^2}
 D^2=6c^2+ 6 pch+p^2h^2.
\end{equation}
For $i\leq 3$, the equation for $\xi_i$, i.e. $(D-\xi_i D_i)^2=0$, gives
\begin{equation*}
 \xi_i=\frac{D^2}{2(D.D_i)}.
\end{equation*}
The inequality $2\xi_i D^2>D.D_i \xi_i^2+3p_i D^2$, appearing in the hypotheses of the Main Theorem of \cite{cz1} (or Theorem 2.1 of \cite{cz2}), becomes
\begin{equation*} 
 2 D^2\frac{D^2}{2(D.D_i)}>(D.D_i)\left(\frac{D^2}{2(D.D_i)}\right)^2+3p_i D^2,
\end{equation*}
which simplifies to $D^2>4p_i (D.D_i)$, i.e. $6c^2+ 6 pch+p^2h^2   >4(2c^2+pch) $, or
\begin{equation}\label{inD_i}
p^2h^2+2pch>2c^2.
\end{equation}
For $i=4$, the equation for $\xi=\xi_4$ is
\begin{equation}\label{xi_4}
 h^2\xi^2-2(D.H)\xi+D^2=0,
\end{equation}
which provides the relation 
\begin{equation}\label{xiquadro}
 \xi^2=(2(D.H)\xi-D^2)/h^2.
\end{equation}
 Solving the equation (\ref{xi_4}) for $\xi$, taking into account that by (\ref{dh}) and (\ref{d^2}) one has  $(D.H)^2-D^2H^2=3c^2h^2$, we obtain for the minimal solution $\xi$ the expression
\begin{equation}\label{xi4}
 \xi=\frac{3ch+ph^2-\sqrt{3}ch}{h^2}=(3-\sqrt{3})\frac{c}{h}+p.
\end{equation}
Now, the condition $2\xi D^2>(D.H)\xi^2+2pH^2$, appearing in the hypotheses of the Main Theorem of \cite{cz1}, becomes, after substituting $\xi^2$ by its expression in (\ref{xiquadro}), 
\begin{equation*} 
 2\xi D^2>(D.H)\left(\frac{2(D.H)\xi-D^2}{h^2} \right)+3pD^2.
\end{equation*}
The above inequality becomes $2\xi ((D.H)^2 -D^2 h^2 )<(D.H)D^2- 3p h^2 D^2$, which, by (\ref{xi4}), (\ref{dh}), (\ref{d^2}) and the already used fact that the discriminant  $(D.H)^2-D^2 H^2$ equals $3c^2h^2$, amounts to   
\begin{equation}\label{inH}
 6c^2h \left[(3-\sqrt{3})c+ph\right]<(3ch-2ph^2) (6c^2+6pch+p^2h^2).
\end{equation}
Remember that we were searching a positive integer $p$ such that both inequalities (\ref{inD_i}) and (\ref{inH}) are satisfied. Now, it is easy to see that such inequalities are indeed satisfied for $p=\frac{3c}{4h}$. This number might be a rational non integral one; but, from the relation $p_iD_iH=ch$, appearing in the present assumptions, and the fact that $h=(H^2)^{1/2}$ is fixed (i.e. independent of $p_1,p_2,p_3$), one easily see that if $p_1,p_2,p_3$ are replaced by suitable positive multiples, the assumptions of the theorem will still be satisfied and one can secure that the number $c/(4h)$ is indeed an integer. This remark concludes the proof.
\qed

\medskip
{\it Proof of Theorem \ref{divisione}}. Let $(f_1,g_1),(f_2,g_2), (f_3,g_3)$ be the  pairs of polynomials appearing in Theorem \ref{divisione}, and let $D_i^\prime$ (for $i=1,2,3$) be the curves in $\Pp_2$ whose local equations in $\A^2$ are $f_i=0$. Let $H^\prime$ be the line at infinity of $\Pp_2$. For each $i=1,2,3$, let  $\{P_{i,1},\ldots,P_{i,l_i}\}$ be the intersection of the affine curves $f_i=0$ and $g_i=0$; note that $l_i\leq \deg(f_i)^2$, by the assumption $\deg(f_i)\geq \deg g_i$. If $\deg g_i<\deg f_i$ complete the set $\{P_{i,1},\ldots,P_{i,l_i}\}$ by adding points $P_{i,l_i+1},\ldots, P_{i,\deg(f_i)^2}$ on the curve $f_i=0$, but outside the two other curves $f_j=0$ (for $1\leq j\leq 3$, $j\neq i$). We thus obtain $\sum_{i=1}^3\deg(f_i)^2$   points on the affine plane $\A^2$. 
The general position assumptions guarantee they are indeed distinct; also, they guarantee that no three among the four projective curves $D_1^\prime,D_2^\prime,D_3^\prime,H^\prime$ intersect and no two intersect on any of the points $P_{i,j}$. Let $\tilde{X}$ be the blow-up of the projective plane over the points $P_{i,j}$; let $D_i$ be the strict transform of $D_i^\prime$, and let $H\subset\tilde{X}$ be the curve corresponding to $H^\prime$. After enlarging if necessary the finite set of places $S$, we can ensure that at each point $P_{i,j}$ the assumptions of Lemma \ref{riduzione} are satisfied (with $\varphi=f_i, \psi=g_i$). Suppose now $(x,y)\in\OS^2$ is a point satisfying $f_i(x,y)|g_i(x,y)$ for $i=1,2,3$ (so in particular $f_i(x,y)\neq 0$, which excludes that $(x:y:1)$ is one of the blown-up points). Let $p\in\tilde{X}$ be its corresponding point on the blown-up surface. Since $x$ and $y$ are $S$-integers, the point $(x:y:1)$ is integer with respect to $H^\prime$, so the corresponding point $p$ is  integer with respect to $H$.  By Lemma \ref{riduzione}, the point $p$ is  integer also with respect to $D_i$ for $i=1,2,3$. We now show that we can apply Proposition \ref{proposizione2} to conclude that such points are not Zariski-dense.  Letting $d_i>0$ be the degree of $f_i$, we observe that $D_i$ is the strict transform of a curve of degree $d_i$ in $\Pp_2$ blown-up at $d_i^2$ points, so its self-intersection vanishes; the intersection product $D_iD_j$ (for $1\leq i<j\leq 3$) equals $d_id_j$, since no common point of $D_i^\prime, D_j^\prime$ is blown-up. Also, since no point in $D_i^\prime\cap H^\prime$ is blown-up, we have $D_i.H=d_i$. Now put $p_i=d_1d_2d_3/d_i$. It is immediate to check that the assumptions of Proposition \ref{proposizione2} are satisfied with $c=d_1d_2d_3$ and $h=1$. 
\qed
\medskip

The above proof also shows the correspondence between the divisibility problems considered in Theorem \ref{divisione} and integrality on certain surfaces; in such a correspondence, the case $\deg f_i=\deg g_1 =1$ for $i=1,2,3$ leads to the surface considered in the Corollary to Theorem \ref{divisione}.

\medskip

\noindent{\it Proof of Theorem \ref{forme}}. The proof is very similar to the above one and to the proof of Theorem \ref{ennagono}. Let, for each $i=1,\ldots,3$, $D_i^\prime\subset\Pp_2$ be the projective curve of equation $F_i=0$, and let $d=\deg F_i=\deg D_i^\prime$. Let $P_{i,1},\ldots,P_{i,d\cdot\deg G}$  be the intersection points of $D_i^\prime$ with the curve $G=0$ (by assumption  there are exactly $d\cdot\deg G$ of them). 
Let $\tilde{X}\rightarrow Pp_2$ be the blowu-up of the plane over all these points. Again by Lemma 1, we are reduced to proving the degeneracy of integral points on $ \tilde{X}\setminus(D_1\cup\ldots\cup D_r)$, where $D_i$ is the strict transform of $D_i^\prime$ for $i=1,\ldots,r$. In the case $\deg G=\deg F$, all the self products $D_i^2$  vanish, and the products $D_iD_j$ for $i\neq j$ are all equal to $d^2>0$. Then the degeneracy of integral points follows directly from Theorem 1, part (b), of \cite{cz1}. Hence the Theorem is proved in this case. If $\deg F>\deg G$, just remark that the theorem becomes weaker if we blow-up more points on the curves $D_i^\prime$; hence, let us choose, for each $i$, any set of $\deg F_i(\deg F_i-\deg G)$ points on the curve $D_i^\prime$, outside each the curve $D_j^\prime$ (for $j\neq i$) and the curve $G=0$. Considering the corresponding surface $\tilde{X}$, and letting again $D_i$ be the strict transforms of $D_i^\prime$, we reduce to the previous case. 
\qed

\medskip

\noindent {\it Proof of Corollary \ref{DelPezzo}}. We first derive  Corollary \ref{DelPezzo} from Theorem 1 of \cite{cz1}: the five conics $\cc_1,\ldots,\cc_5$ have self-intersection zero and the intersection products $\cc_i.\cc_j$, for $i\neq j$ are all equal to one, so Theorem 1 of \cite{cz1} prove the Corollary.
 
\smallskip

 We also sketch a deduction of the Corollary from Theorem \ref{forme}. Let $\tilde{X}\subset\Pp_4$ be a (smooth) Del Pezzo surface of degree 4. Then $\tilde{X}$ is obtained by blowing-up five points $P_1,\ldots,P_5$ in general position in $\Pp_2$, and is embedded in $\Pp_4$ via the linear sistem of cubics passing through $P_1,\ldots,P_5$. The lines on $\tilde{X}$ correspond to the five blown-up points, to the lines on $\Pp_2$ connecting two such points, and to the unique conic containing $P_1,\ldots,P_5$; they all have self-intersection $-1$. Every hyperplane section of $\tilde{X}$ corresponds to a cubic curve in $\Pp_2$, passing through $P_1,\ldots,P_5$. If a hyperplane section is formed by two conics, the corresponding cubic is formed by a conic passing through four of the five points $P_1,\ldots,P_5$ and a line passing through the remaining one. Now, the given surface $\tilde{X}$ can be obtained in different ways as the blown-up of $\Pp_2$ over five points, namely one can choose in different ways the five lines on $\tilde{X}$ to  blow-down. We contend that after a suitable choice (of the five lines to blow-down) all the chosen conics $\cc_1,\ldots,\cc_5$ on $\tilde{X}$ correspond to conics in $\Pp_2$, each containing all but one point in the set $P_1,\ldots,P_5$; we omit this verification, which follows from the combinatorics of lines. Then let, for $i=1,\ldots,5$,  $F_i=0$ be an equation  for the conics corresponding to $\cc_i$ on the plane $\Pp_2$, so $F_i$ is a quadratic form in three variables. Let $G=0$  be an equation for the unique conic passing through $P_1,\ldots,P_5$, so $G$ too is a quadratic form. Again by Lemma \ref{riduzione}, 
the rational points on $\tilde{X}$, integral with respect to the strict transforms of $\cc_1,\ldots,\cc_5$ correspond to solutions $(x,y,z)\in\OS^3$ of the divisibility problem $F_i(x,y,z)|G(x,y,z)$. Then Theorem \ref{forme} applies and the Corollary is obtained.
\qed

\medskip

\noindent {\it Proof of Theorem \ref{DelPezzo2}}. Let $H_1,\ldots,H_4$ the hyperplane sections appearing in the statement, and suppose $H_4=\cc+\cc^\prime$ is reducible. Note that $H_i^2=H_i.H_j\, (=4)$ for $1\leq i,j\leq 3$ and $H_i.\cc\geq 1$  for all $i=1,2,3$. Then we can apply Theorem 1 of \cite{cz2}, concluding the proof.
\qed

\section{Geometry  on certain open surfaces}\label{geometrica}

The aim of this section is two-fold: 
\smallskip

(1) We first want to calculate the generalized Albanese variety of the affine surfaces appearing in Theorem \ref{1} and in the Corollary to Theorem \ref{divisione} and prove the claim we made in the introduction: namely that such Albanese varieties are one dimensional for the surface appearing in Theorem \ref{1} and trivial for the one appearing in the Corollary; the latter is even simply connected (Theorem \ref{simplyconnected} below). 
\smallskip

(2) Second, we
want to classify   the possible infinite families of integral points on some of the open surfaces considered so far. 
In this respect, note that our results from section \ref{dimostrazioni} provide the degeneracy of the set of integral points on such surfaces $X$; this means that all but finitely many integral points lie on the union of finitely many curves on $X$. Now, Siegel's theorem on integral points on curves states that any such curve is rational, and moreover can be  parametrized either by $\A^1$ or by $\Gm$. 
 
We intend in this section to classify completely the non constant morphisms $\A^1\rightarrow X$ and $\Gm\rightarrow X$, for some of the surfaces   $X$ considered in the preceding section. 
Clearly, the most difficult case will concern the multiplicative group $\Gm$, i.e. the curves of vanishing Euler characteristic on $X$.

All the statements in this section are of geometric nature; in particular they hold over any field $k$ of characteristic zero, which one can suppose algebraically closed. 

\medskip
We begin by studing generalized Albanese varieties. Recall that given a smooth complete variety $\tilde{X}$ and a hypersurface $D\subset\tilde{X}$ with normal-crossing singularities (if any), putting $X=\tilde{X}\setminus D$, one can define the  generalised Albanese variety of $X$ (or quasi Albanese, for some authors) in the following way: it is a semi-abelian variety $G$ endowed with a morphism $\pi:X\to G$ with the following universal property: for every semi-abelian variety $G^\prime$ and every morphsim  $f: X\to G^\prime$, there exists a morphism $g: G \to G^\prime$  with $f=g\circ\pi$. The pair $(G,\pi)$ can also be defined analytically, by integration of the $1$-forms on $\tilde{X}$ with at most logarithmic poles along $D$. 

In this paper, all the varieties we consider are rational, so the Albanese varieties we are interested in will be linear tori. 

We start with the analysis of cubic surfaces; our reference is chapter IV from \cite{beau}. A smooth cubic surface $\tilde{X}$ contains twenty-seven lines, each having self-intersection $-1$; the maximal number of pairwise disjoint lines in $\tilde{X}$ is six. By blowing them down, one obtains the projective plane. Vice-versa, $\tilde{X}$ can be defined also as the blow-up of the projective plane over six points $P_1,\ldots,P_6\in\Pp_2$ in general position (i.e. no three of them on a line and not all of them on a conic). Let us denote by $\pi:\tilde{X}\rightarrow\Pp_2$ the corresponding blowing-up map. Then $\tilde{X}$ is embedded in $\Pp_3$ via the linear system of cubics passing through $P_1,\ldots,P_6$; the twenty-seven lines on $\tilde{X}$ correspond to the six exceptional divisors, the six conics containing five points among $P_1,\ldots,P_6$ and the fifteen lines joining two points $P_i,P_j$, for $1\leq i<j\leq 6$.
The six lines on the two hyperplane sections appearing in the statement can be obtained as follows: for each $i\in\Z/6\Z$, consider the line $L_i$ joining $P_i$ and $P_{i+1}$; denoting  by $\hat{L}_i$ its strict transform, the sum $\hat{L}_1+\hat{L}_3+\hat{L}_5$ will form a hyperplane section $H_1$ on $\tilde{X}$, while the other will be $H_2=\hat{L}_2+\hat{L}_4+\hat{L}_6$. Let us denote by $E_i$ the exceptional divisor above $P_i$; it is a line on $\tilde{X}$ intersecting both $L_i$ and $L_{i+1}$, but no other lines of the forms $L_j$. From this picture it immediately follows that   $L_i^2=-1$ for each $i\in\Z/6\Z$. Actually, each line on the cubic $\tilde{X}$ has self-intersection $-1$.
\smallskip

The above considerations will be crucial in the sequel; in particular in the proof  of the following:
\smallskip

\begin{theorem}\label{Albanesecubiche}
Let $\tilde{X}\subset\Pp_3$ be a smooth cubic hypersurface, $X\subset\tilde{X}$ be the open subset obtained by removing six lines on $\tilde{X}$ lying on two planes.
 The generalised Albanese variety of $X$ is a one-dimensional torus. 
\end{theorem}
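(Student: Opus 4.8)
The plan is to compute the dimension of the generalized Albanese variety of $X$ through logarithmic differential forms. Recall that for a smooth projective surface $\tilde{X}$ and a (simple) normal crossing divisor $D=D_1+\ldots+D_6$, the generalized Albanese of $X=\tilde{X}\setminus D$ is a semiabelian variety whose abelian part is $\mathrm{Alb}(\tilde{X})$ and whose dimension equals the logarithmic irregularity $\bar{q}:=h^0(\tilde{X},\Omega^1_{\tilde{X}}(\log D))$. Since $\tilde{X}$ is rational we have $H^0(\tilde{X},\Omega^1_{\tilde{X}})=0$ and $\mathrm{Alb}(\tilde{X})=0$, so the generalized Albanese is automatically a torus $\Gm^{\bar q}$ and the whole statement reduces to proving $\bar{q}=1$. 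First I would check that $D$ is indeed a normal crossing divisor: by the intersection pattern recalled before the statement, consecutive lines are disjoint and every remaining pair meets in a single point, so that the three lines in each plane form a triangle with ordinary nodes and $D$ has at worst double points.

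Next I would exploit the residue exact sequence
\begin{equation*}
0\to\Omega^1_{\tilde{X}}\to\Omega^1_{\tilde{X}}(\log D)\xrightarrow{\ \mathrm{Res}\ }\bigoplus_{i=1}^{6}\mathcal{O}_{D_i}\to 0,
\end{equation*}
in which each component $D_i$ is a rational curve, so $H^0(\mathcal{O}_{D_i})=\C$. Passing to the long exact sequence and using $H^0(\tilde{X},\Omega^1_{\tilde{X}})=0$, one obtains an isomorphism of $H^0(\tilde{X},\Omega^1_{\tilde{X}}(\log D))$ onto the kernel of the connecting homomorphism $\delta\colon\C^6\to H^1(\tilde{X},\Omega^1_{\tilde{X}})$. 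The key classical fact is that $\delta$ is the cycle class map: it sends the generator $1_{D_i}$ of the $i$-th summand to the class $[D_i]$ in $H^1(\tilde{X},\Omega^1_{\tilde{X}})\cong \mathrm{NS}(\tilde{X})_\C$. Hence $\bar{q}$ equals the dimension of the space of $\C$-linear relations $\sum_i\lambda_i[D_i]=0$ among the six line classes in the N\'eron--Severi group.

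The remaining step is a direct computation. Using the realization of $\tilde{X}$ as the blow-up of $\Pp_2$ at six points $P_1,\ldots,P_6$ with standard basis $\ell,e_1,\ldots,e_6$ (so $\ell^2=1$, $e_i^2=-1$, all other products zero), the line $D_i$ joining $P_i$ and $P_{i+1}$ has class $[D_i]=\ell-e_i-e_{i+1}$, indices in $\Z/6\Z$. Imposing $\sum_i\lambda_i(\ell-e_i-e_{i+1})=0$ and reading off the coefficient of each $e_j$ gives $\lambda_{j-1}+\lambda_j=0$, forcing the $\lambda_i$ to alternate in sign, while the coefficient of $\ell$ yields $\sum_i\lambda_i=0$, which is then automatic. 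Thus the relation space is one-dimensional, spanned by $(1,-1,1,-1,1,-1)$, i.e. geometrically by the single relation $[D_1]+[D_3]+[D_5]=[D_2]+[D_4]+[D_6]$ expressing that $H_1$ and $H_2$ are linearly equivalent hyperplane sections. Therefore $\bar{q}=1$ and the generalized Albanese of $X$ is $\Gm$.

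I expect the main point — more a conceptual pivot than an obstacle — to be the identification of the connecting map in the residue sequence with the cycle class map; once that standard fact is in hand, both the vanishing $H^0(\Omega^1_{\tilde{X}})=0$ and the relation count are immediate. The only verification genuinely needing care is that $D$ is a simple normal crossing divisor, so that the logarithmic formalism and the residue sequence apply as written; this is precisely where the hypothesis that the six lines lie on two planes, giving the triangle-plus-triangle configuration with no triple points, is used.
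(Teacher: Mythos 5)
Your proof is correct, but it follows a genuinely different route from the paper's. The paper verifies the universal property of the Albanese directly: it exhibits the canonical map $\pi=F_1/F_2:X\to\Gm$ (quotient of the two linear forms cutting the planes), and then shows that any morphism $f:X\to\Gm$ has divisor supported on $L_1\cup\ldots\cup L_6$ whose class relation must be an integer multiple of $L_1+L_3+L_5-L_2-L_4-L_6$, whence $f=\lambda\pi^N$; the key input is that the subgroup of $\mathrm{Pic}(\tilde{X})$ generated by the six lines has rank $5$, proved by intersecting putative relations with the exceptional curves $E_i$. You instead invoke the structure theory of the quasi-Albanese (dimension equals the log irregularity $h^0(\Omega^1_{\tilde X}(\log D))$, abelian part equals $\mathrm{Alb}(\tilde X)=0$) and compute $\bar q$ via the residue sequence and the identification of the connecting map with the cycle class map. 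Note that the core computation is the same in both proofs: your kernel of $\delta$ is exactly the paper's relation module, and your explicit classes $\ell-e_i-e_{i+1}$ give the same single relation $[D_1]+[D_3]+[D_5]=[D_2]+[D_4]+[D_6]$. What the two approaches buy is different: the paper's argument is elementary and self-contained (no log differentials, no SNC hypothesis needed, since the universal property is checked by hand) and it produces the Albanese map explicitly; yours is faster once the standard machinery is granted, directly computes the logarithmic irregularity mentioned in the introduction, and generalizes mechanically to any configuration of boundary curves on a rational surface.

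One small caveat in your SNC verification: pairwise intersections in single points do not by themselves exclude that the three lines in one plane pass through a \emph{common} point (an Eckardt point, which smooth cubic surfaces can have); in that degenerate case $D$ is not normal crossing and your residue sequence would require first blowing up the triple point. This is not a serious defect --- the paper's own definition of the generalized Albanese already presupposes a normal crossing boundary, and the configuration obtained from six points in general position is generically free of Eckardt points --- but if you want your argument to cover every configuration allowed by the statement, you should either add the no-Eckardt-point hypothesis or pass to a log resolution and redo the (still one-dimensional) kernel computation there; the paper's universal-property argument, by contrast, is insensitive to this degeneration.
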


\noindent{\it Proof of Theorem \ref{Albanesecubiche}}. Let $H_1,H_2$ be the hyperplane sections mentioned in the statement; each is defined by a linear equation $F_i(x_0,\ldots,x_3)=0$ in $\Pp_3$; the rational function $\pi:F_i/F_2$ induces a never vanishing regular function on $X$, still denoted by $\pi$, so a morphism $\pi: X\to \Gm$. 
We should prove that every other morphism $X\to\Gm$ factors through $\pi$. Letting $L_1,L_3,L_5$ be the three components of $H_1$ and $L_2,L_4,L_6$ the components of $H_2$, the divisor of $\pi$ (viewed as a rational fucntion on $\tilde{X}$) is 
\begin{equation*}
 (\pi)= L_1+L_3+L_5-L_2-L_4-L_6.
\end{equation*}
In particular, on the Picard group ${\rm Pic} (\tilde{X})$ we have the relation $L_1+L_3+L_5=L_2+L_4+L_6$. We shall prove that all the linear relation in the group generated by $L_1,\ldots,L_6$ derive from the above one, so such group is free-abelian of rank $5$. For this task, consider any non-trivial linear combination of  $L_1,\ldots,L_6$ omitting  one term, for instance $L_6$. So take an index $1\leq i\leq 5$, integers $a_1,\ldots,a_i$, with $a_i\neq 0$, such that $a_1L_1+\ldots+a_iL_i=0$ in ${\rm Pic}(\tilde{X})$. Taking the intersection product with $E_i$, we have $(a_1L_1+\ldots+a_iL_i).E_i=a_i\neq 0$, which shows that the linear combination under consideration deos not vanish in the Picard group.
 
Let now $f:X\to\Gm$ be another morphism. The divisor of $f$ must also have its support in $L_1\cup\ldots\cup L_6$. Let us write
\begin{equation*}
 (f)=\sum_{i\in\Z/6\Z}m_i L_i-\sum_{j\in \Z/6\Z}n_jL_j
\end{equation*}
 where the two functions $\Z/6\Z\ni i\mapsto m_i\in\{0,1,\ldots\}$ and $\Z/6\Z\ni i\mapsto n_i\in\{0,1,\ldots\}$ have disjoint supports. Since the divisor class of $\sum_{i\in\Z/6\Z}m_iL_i-\sum_{j\in \Z/6\Z}n_jL_j $ in the Picard group vanishes, by the above consideration we must have  $\sum_{i\in\Z/6\Z}m_iL_i-\sum_{j\in \Z/6\Z}n_jL_j=N(L_1+L_3+L_5-L_2-L_4-L_6) $ for some integer $N\in\Z$. Then the function $f/\pi^N$ has a trivial divisor, so is a constant $\lambda\neq 0$. In other words, $f=\lambda\cdot \pi^N$ is obtained from $\pi$ by composition with an endomorphism of $\Gm$, which proves that the Albanese variety of $X$ is isomorphic to $\Gm$.
\qed

\medskip

Let us now consider the surface $X$ appearing in Corollary \ref{rettescoppiate}; our goal is to prove Theorem \ref{simplyconnected}, stating that $X$ is simply connected. Recall that $X=\tilde{X}\setminus D$, where $\tilde{X}$ is the plane blown-up on three points $P_1,P_2,P_3$, and $D$ is the strict transform of four lines $L_1,\ldots,L_4$, with $P_i\in L_i$ for $i=1,2,3$. 
Let $D_i$ be the strict transform of $L_i$ (for $i=1,2,3$), and let $E_i$  (for $i=1,2,3$) be the exceptional divisor crossing $D_i$. Finally, let $Y\subset X$ be the surface 
\begin{equation*}
 Y:=\tilde{X}\setminus (D_1\cup\ldots\cup D_4\cup E_1\cup\ldots\cup E_3)=X\setminus(E_1\cup E_2\cup E_3).
\end{equation*}
It is clear that $Y\simeq \Pp_2\setminus(L_1\cup\ldots\cup L_4)$. Now, to prove Theorem \ref{simplyconnected} we shall use the following facts, stated as lemmas \ref{aperto}, \ref{Z3}, \ref{nociclici}:

\begin{lemma}\label{aperto}
 Let $X$ be a (connected) complex manifold, $Z\subset X$  a proper closed complex submanifold, $Y=X\setminus Z$ its complement, $p\in Y$ a point. Then the inclusion $i:Y\hookrightarrow X$ induces a surjective homomorphism $i_*:\pi_1(Y,p)\twoheadrightarrow\pi_1(X,p)$ between the corresponding fundamental groups.
\end{lemma}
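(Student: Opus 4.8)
The plan is to prove surjectivity of $i_*$ by a transversality argument, exploiting the fact that a closed complex submanifold $Z\subset X$ has real codimension at least $2$ (its complex codimension being at least $1$). Concretely, given a class in $\pi_1(X,p)$ I will show that it is represented by a loop which already misses $Z$, and hence lies in $Y$.

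First I would take a continuous loop $\gamma:S^1\to X$ with $\gamma(1)=p$ representing a given element of $\pi_1(X,p)$, and replace it, by the standard smooth-approximation theorem rel the basepoint, with a smooth loop in the same based homotopy class. Since $p\in Y$ and $Y$ is open, $\gamma$ maps a neighborhood of $1\in S^1$ into $Y=X\setminus Z$, so it is already (vacuously) transverse to $Z$ there. Applying the relative version of Thom's transversality theorem, I would homotope $\gamma$, keeping it fixed on that neighborhood (in particular keeping the basepoint fixed), to a smooth loop $\gamma'$ that is transverse to $Z$ everywhere.

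The point is now a dimension count: at any hypothetical intersection point $z=\gamma'(s)\in Z$, transversality would require $d\gamma'(T_sS^1)+T_zZ=T_zX$; but $\dim_{\mathbb R} d\gamma'(T_sS^1)\le 1$ while $T_zZ$ has real codimension at least $2$, so the left-hand side cannot fill $T_zX$. Hence $\gamma'$ does not meet $Z$ at all, i.e. $\gamma'$ is a loop in $Y$ based at $p$, and by construction $i_*[\gamma']=[\gamma]$ in $\pi_1(X,p)$. This establishes the desired surjectivity.

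I expect the only delicate point to be the bookkeeping of the basepoint: one must invoke the relative transversality theorem so that the homotopy fixes a neighborhood of $1\in S^1$, thereby producing a based rather than merely free homotopy. It is worth remarking that the same method does \emph{not} yield injectivity, since a homotopy between two loops has a two-dimensional domain and could be pushed off $Z$ only if the real codimension were at least $3$; with codimension precisely $2$ this may genuinely fail, which is consistent with the Lemma asserting surjectivity alone.
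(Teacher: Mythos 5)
Your proof is correct, but it follows a genuinely different route from the one the paper writes out. The paper's own proof is a covering-space argument: pass to the universal cover $\pi\colon X'\to X$ with deck group $\Gamma\simeq\pi_1(X,p)$, note that $Z':=\pi^{-1}(Z)$ is a complex analytic subset of real codimension at least two, so $Y':=X'\setminus Z'$ is still connected; hence $Y'\to Y$ is a connected cover with automorphism group $\Gamma$, and composing with the universal cover of $Y'$ realises $\Gamma$ as a quotient of $\pi_1(Y,p)$. Your transversality argument is essentially the ``alternative, more concrete proof'' which the paper only mentions in one line after its proof, citing Lemma 3.12 of Buzzard--Lu: a based loop can be pushed off a real-codimension-two subvariety by a homotopy rel basepoint, because a map from a $1$-dimensional source that is transverse to a submanifold of codimension at least two must miss it entirely. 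Comparing the two: your argument is more elementary (no covering-space theory) and has the advantage of showing directly that the surjection is induced by the inclusion $i$, a point the paper's proof dismisses with ``it is easy to see that the surjective homomorphism so constructed coincides with $i_*$''; on the other hand, the covering-space proof uses nothing about $Z$ beyond the codimension of its preimage, so it applies verbatim when $Z$ is a possibly singular analytic subvariety, whereas your appeal to Thom transversality requires $Z$ to be a submanifold (for singular $Z$ one would need a stratification or a separate reduction). Your closing remark that the same method cannot give injectivity, since a homotopy has two-dimensional domain, is also correct and consistent with the statement.
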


\begin{proof} 
 We shall use the functorial property of the fundamental group. Let $\pi:X^\prime\to X$ be the universal cover of $X$, $\Gamma\simeq \pi_1(X,p)$ be the deck-transformation group, and let $Y^\prime:=\pi^{-1}(Y)$ be the pre-image of $Y$ in $X^\prime$. Then $Y^\prime=X^\prime\setminus Z^\prime$, where $Z^\prime:=\pi^{-1}(Z)$ is a complex analytic sub-variety. In particular $Z^\prime$ has real codimension at least two, so $Y^\prime$ is connected and $\Gamma$ is the automorphism group of the cover $Y^\prime\to Y$. Letting $\pi^{\prime}:Y^{\prime\prime}\to Y^\prime$ be the universal cover of $Y^\prime$, we obtain by composition the universal cover $\pi\circ\pi^\prime:Y^{\prime\prime}\to Y$ of $Y$. Then the group $\Gamma$ is realised as a quotient of the deck-transformations group of $Y^{\prime\prime}\to Y$, i.e. of $\pi_1(Y,p)$, as wanted. It is easy to see that the surjective homomorphism $\pi_1(Y,p)\to\Gamma=\pi_1(X,p)$ so constructed coincides with the homomorphism $i_*$ (but actually we do not even need this fact in the sequel).
\end{proof}

An alternative, more concrete proof, is based on the following fact: {\it Let  $X$ be a real manifold  of dimension $\geq 2$, $Z\subset X$ a codimension two subvariety. Let $p\in X\setminus Z$ be a point, $\gamma: [0,1]\to X$ a loop with base point $p$. Then $\gamma$ is homotopic to a loop in $X\setminus Z$ (with same base point).} This fact is well-known, and appears for instance a Lemma 3.12 in \cite{bl}. 

\begin{lemma}\label{Z3}
 The fundamental group of the complement of four lines in general position in $\Pp_2$ is isomorphic to $\Z^3$. 
\end{lemma}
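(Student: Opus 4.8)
The plan is to compute the fundamental group of $\Pp_2 \setminus (L_1 \cup \ldots \cup L_4)$ by first reducing to an affine situation. Since the four lines are in general position, I may choose coordinates so that $L_4$ is the line at infinity; then $U := \Pp_2 \setminus L_4 \cong \A^2 = \C^2$, and $L_1, L_2, L_3$ restrict to three affine lines $\ell_1, \ell_2, \ell_3$ in $\C^2$ which are in general position (pairwise intersecting in three distinct finite points, no common point, and none parallel, since the lines met $L_4$ at three distinct points at infinity). Thus I am reduced to computing $\pi_1(\C^2 \setminus (\ell_1 \cup \ell_2 \cup \ell_3))$.

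For an arrangement of affine lines in $\C^2$, the standard tool is the Zariski--van Kampen method, or equivalently one can observe that the complement of a \emph{generic} arrangement of $d$ lines in $\C^2$ has abelian fundamental group isomorphic to $\Z^d$. The key structural input is that for three lines in general position the arrangement is combinatorially a triangle, and the complement retracts onto a space whose fundamental group is free abelian of rank equal to the number of lines. Concretely, I would project $\C^2 \to \C$ along a direction generic with respect to the three lines (so each fiber meets the arrangement in exactly three points away from the two branch values coming from the three double points, generically distinct), and apply van Kampen: the generic fiber is $\C$ minus three points, with free $\pi_1$ on three meridian loops $g_1, g_2, g_3$ (one around each line), and the monodromy relations coming from the double points force these generators to commute in the appropriate pairs. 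For a general position arrangement all the resulting relations are commutators, yielding $\pi_1 \cong \Z^3$. Equivalently, and perhaps more cleanly, I would invoke the classical fact that the complement of a nodal (general position) line arrangement has abelian fundamental group; for $d$ lines in $\Pp_2$ removing them all leaves $\pi_1 \cong \Z^{d-1}$, which for $d=4$ gives $\Z^3$.

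An alternative, more self-contained route avoids monodromy computations entirely. Using $U = \C^2$ as above, I note that $\C^2 \setminus (\ell_1 \cup \ell_2 \cup \ell_3)$ maps to $(\Gm)^3$ via $z \mapsto (f_1(z), f_2(z), f_3(z))$ where $f_i$ is the affine linear equation of $\ell_i$; since general position means $f_1, f_2, f_3$ together with the constant $1$ are affinely independent, this map and the constants generate all invertible regular functions, and the induced map on fundamental groups lets one identify $\pi_1$ with the subgroup of $\Z^3$ generated by the three meridian classes. The abelianization is immediate from Hurwitz/homology, giving $H_1 \cong \Z^3$, so the only real content is that $\pi_1$ is already abelian; this is exactly what the general position (nodal) hypothesis guarantees.

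I expect the main obstacle to be the justification that $\pi_1$ is abelian, i.e. that no nontrivial relations beyond commutators appear. This is where the general position hypothesis is essential: it ensures all singularities of the arrangement are ordinary double points (nodes), and it is precisely for nodal arrangements that the van Kampen relations collapse to commutativity. I would handle this either by citing the standard result on complements of generic (nodal) line arrangements or by carrying out the van Kampen presentation explicitly for the triangle configuration, checking that the two double-point relations commute the relevant meridian pairs while the product $g_1 g_2 g_3$ relation (the loop at infinity, absorbed into $L_4$) is consistent with the rank-three free abelian conclusion. The homological count $H_1 \cong \Z^3$ fixes the rank, so the entire difficulty is concentrated in the commutativity statement.
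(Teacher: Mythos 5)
Your proposal is correct in substance, but it does considerably more work than the paper, which disposes of this lemma with a citation: the statement is exactly Zariski's classical theorem on complements of line arrangements, proved in Chapter VIII of \emph{Algebraic Surfaces} (the paper adds a footnote that Zariski's more general claim, abelianness of $\pi_1$ for the complement of any nodal curve, had a flawed proof that was repaired only later, but that for arrangements of lines Zariski's original argument is sound). So your fallback option of ``invoking the classical fact that the complement of a nodal line arrangement has abelian fundamental group'' \emph{is} the paper's proof, verbatim. Your primary route---send $L_4$ to infinity, reduce to three affine lines in general position, and run Zariski--van Kampen for a generic projection---is a legitimate self-contained alternative, and both the reduction step and the homological determination of the rank ($H_1\cong\Z^3$) are fine. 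One slip should be corrected, though: the triangle formed by three affine lines has \emph{three} double points, so a generic projection has three branch values and van Kampen yields three commutation relations, not two as you write (twice). The count matters: with only two of the commutators, say $[g_1,g_2]$ and $[g_2,g_3]$, the group on $g_1,g_2,g_3$ is not abelian, so you need the relation coming from each of the three nodes to collapse the free group to $\Z^3$. Finally, your second ``alternative'' via the map to $\Gm^3$ is, as you yourself concede, not a proof on its own---it only controls $H_1$, and the whole content of the lemma is that $\pi_1$ is already abelian.
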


\begin{proof}
 This is a well-known theorem of Zariski,  proved \footnote{Zariski stated a much more general result in \cite{Zariski},  on the fundamental group of the complement of any (reducible) curve with normal crossing singularities; as remarked for instance by Mumford in the notes to the latest edition of Zariski's book  \cite{Zariski}, the proof of this general statement is incorrect, and was settled only later by Abhyankar. Neverthless, in the case of an arrangement of lines, considered in Corollary \ref{Z3}, Zariski's proof is sound.} in \cite{Zariski}, Chap. VIII.
\end{proof}

\begin{lemma}\label{nociclici}
 The surface $X$ admits no finite cyclic unramified connected cover of degree $>1$. 
\end{lemma}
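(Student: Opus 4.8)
```latex
The plan is to prove Lemma~\ref{nociclici} by showing that the fundamental group $\pi_1(X)$ is trivial, or at least has no nontrivial finite cyclic quotient, which is equivalent to the non-existence of a connected unramified cyclic cover of degree $>1$. I would exploit the chain of surfaces $Y\subset X\subset\tilde X$ already set up in the text, where $Y\simeq\Pp_2\setminus(L_1\cup\dots\cup L_4)$ and $X=Y\cup(E_1\cup E_2\cup E_3)$ modulo the removed divisors. By Lemma~\ref{aperto}, the inclusion $Y\hookrightarrow X$ induces a surjection $\pi_1(Y)\twoheadrightarrow\pi_1(X)$, and by Lemma~\ref{Z3} we know $\pi_1(Y)\cong\Z^3$. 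Hence $\pi_1(X)$ is an abelian group, a quotient of $\Z^3$, and it suffices to understand which generators of $\Z^3$ die when we pass from $Y$ to $X$ (i.e.\ when we reinsert the exceptional divisors $E_1,E_2,E_3$ into the variety).

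The key computation is therefore to identify the kernel of $\pi_1(Y)\to\pi_1(X)$. The standard presentation of $\pi_1(\Pp_2\setminus(L_1\cup\dots\cup L_4))\cong\Z^3$ comes from loops $\gamma_1,\dots,\gamma_4$ encircling the four lines, subject to the single relation $\gamma_1\gamma_2\gamma_3\gamma_4=1$ (the product of all meridians is trivial in $\Pp_2$). When we re-adjoin a neighborhood of an exceptional divisor $E_i$ to recover $X$, a meridian loop around the curve that $E_i$ meets becomes contractible, because in $X$ that curve is no longer at the boundary: reinserting $E_i$ fills in precisely the loop $\gamma_i$ linking $D_i$. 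More carefully, I would argue that the point $P_i\in L_i$ where we blow up introduces the exceptional divisor $E_i$, and in $X=\tilde X\setminus D$ the divisor $E_i$ is \emph{present} (not removed), so a small loop around $D_i$ near $P_i$ can be pushed across $E_i$ and contracted. This kills the meridian $\gamma_i$ for $i=1,2,3$. Thus in $\pi_1(X)$ we have $\gamma_1=\gamma_2=\gamma_3=1$, and combined with the relation $\gamma_1\gamma_2\gamma_3\gamma_4=1$ we also obtain $\gamma_4=1$. Since $\gamma_1,\dots,\gamma_4$ generate, this forces $\pi_1(X)=0$.

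The main obstacle, and the step requiring the most care, is justifying rigorously that reinserting the exceptional divisor $E_i$ trivializes the meridian $\gamma_i$ around $D_i$. The cleanest way I would make this precise is local: near $P_i$, the blow-up replaces a neighborhood of $P_i$ in $\Pp_2$ by the total space of the tautological bundle, with $E_i$ the zero section and $D_i$ (the strict transform of $L_i$) meeting $E_i$ transversally at one point. A meridian of $L_i$ in $Y$ becomes, after blow-up, a loop that links $D_i$; but because $E_i$ is included in $X$ and intersects $D_i$, this loop bounds a disk that crosses $E_i$ and hence is contractible in $X$. I would phrase this using the fact that the complement of two transversally-meeting smooth curves locally looks like $(\C^*)\times\C$ in suitable coordinates, so that the meridian of one factor becomes null-homotopic once we allow the second curve's neighborhood to remain. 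A van~Kampen argument comparing $Y$, $X$, and tubular neighborhoods of the $E_i$ makes this rigorous, though one must check that the relevant inclusion-induced maps behave as claimed.

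Having established $\pi_1(X)=0$, the lemma follows immediately: a simply connected space admits no connected unramified cover of degree $>1$ at all, cyclic or otherwise, so in particular none of cyclic type. In fact this argument proves more than Lemma~\ref{nociclici} asks (it proves full simple-connectedness), which is consistent with the stated goal of establishing Theorem~\ref{simplyconnected}; I would note that for the present lemma one only needs the weaker statement, so even a proof that $\pi_1(X)$ has no nontrivial finite quotient (equivalently that the abelianization, which we have computed to be trivial, kills all cyclic covers) would suffice.
```
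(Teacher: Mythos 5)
Your proposal is correct in substance, but it takes a genuinely different route from the paper's own proof of this lemma. The paper argues algebraically: by the Riemann Existence Theorem a connected unramified cyclic cover $X'\to X$ of degree $d>1$ is algebraic, and by Kummer theory its function field is $\C(\tilde X)(f^{1/d})$ for a rational function $f$; pushing $f$ down to $\Pp_2$ via the blow-down, normalizing by $d$-th powers (using that ${\rm Pic}(\Pp_2)$ is cyclic) so that the resulting function $g$ is a polynomial, and analyzing multiplicities of its irreducible factors, one finds that unramifiedness over the curves of $X$ --- crucially including the exceptional divisors $E_i$, which detect the multiplicities of the linear factors $g_i$ --- forces every factor of $g$ to occur with multiplicity divisible by $d$; hence $f$ is a perfect $d$-th power and the cover is disconnected. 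Your proof instead computes $\pi_1(X)$ directly: the kernel of the surjection $\pi_1(Y)\twoheadrightarrow\pi_1(X)$ is normally generated by the meridians of the re-inserted divisors $E_1,E_2,E_3$, each such meridian equals $\gamma_i$ in $\pi_1(Y)\cong\Z^3$, and killing $\gamma_1,\gamma_2,\gamma_3$ together with the relation $\gamma_1\gamma_2\gamma_3\gamma_4=1$ kills everything. This proves the stronger statement $\pi_1(X)=1$, i.e.\ Theorem~\ref{simplyconnected} itself, making the lemma trivial; notably, the paper sketches exactly this strategy as its ``alternative proof'' of Theorem~\ref{simplyconnected} after the main argument. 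What each approach buys: the paper's proof needs only the abstract isomorphism of Lemma~\ref{Z3} and no meridian bookkeeping, staying entirely within divisors and function fields; yours collapses the three-lemma structure into one topological computation, at the cost of needing two strengthenings of the stated lemmas --- (i) that the kernel of $\pi_1(Y)\to\pi_1(X)$ is \emph{normally generated by meridians} (not just that the map is onto, which is all Lemma~\ref{aperto} gives), and (ii) the explicit meridian presentation of $\pi_1(Y)$ with the relation $\gamma_1\gamma_2\gamma_3\gamma_4=1$ (Lemma~\ref{Z3} as stated gives only the abstract group). Both are standard facts, so this is a matter of citation rather than a gap.

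One local step in your argument is stated imprecisely and deserves correction. You say the lift of the meridian of $L_i$ ``links $D_i$'' and then ``bounds a disk that crosses $E_i$'': as written this is false, since a loop linking $D_i$ near $E_i\cap D_i$ bounds a transverse disk crossing $D_i$, which is removed from $X$, so that disk is of no use. The correct mechanism is: the meridian of $D_i$ and the meridian of $E_i$ (taken at a point of $E_i\setminus D_i$) become \emph{homotopic inside $Y$} after blowing down, because near $P_i$ the only removed divisor downstairs is $L_i$ and both loops wind once around it; and it is the meridian of $E_i$ that bounds a small disk transverse to $E_i$, lying in $X$ because it meets only $E_i\subset X$ and avoids $D$. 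So $\gamma_i$ dies in $\pi_1(X)$ by first homotoping it in $Y$ to the $E_i$-meridian and then contracting. This is exactly the kind of check you flag as requiring care, and it goes through, but the disk must be attached to the $E_i$-meridian, not to the $D_i$-meridian.
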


\begin{proof}
Let $X^\prime\to X$ be a cyclic unramified cover of degree $d>1$. By ``Riemann's Existence Theorem \footnote{proved in its general form by Grauert and Remmert}, see Th\'eor\`eme 5.1 in \cite{sga1}, $X^\prime$ has a structure of algebraic variety  such that the covering map is algebraic. Then the function field of $X^\prime$  is obtained by taking the $d$-th root of a rational function $f\in\C (\tilde{X})^*$ on $\tilde{X}$. The property that $X^\prime\to X$ is unramified means that all zeros and poles of $f$ in $X$ have multiplicity divisible by $d$. This means precisely that: for each irreducible curve $\cc\subset\tilde{X}$  {\it outside} $D_1\cup\ldots\cup D_4$, the multiplicity of $\cc$ in the divisor $(f)$  is divisible by $d$. Considering the blow-up map $\pi:\tilde{X}\to\Pp_2$, which is birational, the function $f$ defines a rational function $(\pi^{-1})^*(f)=:g$ on $\Pp_2$. Hence the function field of $X^\prime$ is obtained from the function field $\C(\Pp_2)=\C(x,y)$ by adding the $d$-th root of $g$. Using the fact that the Picard group of $\Pp_2$ is cyclic we can always multiply $g$ by a suitable $d$-th power having all its poles in $L_4$ obtaining a new function, still denoted by $g$, which is regular outside $L_4$. Hence $g$ can be viewed as a polynomial in $x,y$, after identifying $\Pp_2\setminus L_4\simeq \A^2$. Let $g_i=0$ be a (linear) equation for $L_i\cap\A^2$ ($i=1,2,3$). The condition on the divisor of $f$ (in $\tilde{X}$) can be stated in terms of $g$ by saying that each irreducible factor of $g$ distinct from $g_i$ occurs with multiplicity divisible by  $d$. This will not be enough to conclude, but now observe that if some $g_i$ occured in the factorisation of $g$ with multiplicity not divisible by $d$, then the entire pull-back of $L_i$ would lie in the ramification locus of the cover $\tilde{X^\prime}\to\tilde{X}$, contrary to our hypothesis (recall that we are assuming that $E_i$ does not lie on the ramification locus). Then we have that each irreducible factor of $g$ occurs with multiplicity divisible by $d$, so $g$ is a perfect $d$-th power, and the same is true of $f$. But this means that the cover $X^\prime\to X$ is disconnected, concluding the proof. 
\end{proof}

{\it Proof of Theorem \ref{simplyconnected}}. We shall use the notation of Corollary \ref{rettescoppiate}, already used in the proofs above. Let $Y\subset X$ be the 
open set $X\setminus(E_1\cup E_2\cup E_3)\simeq \Pp_2\setminus(L_1\cup\ldots\cup L_4)$.
Then by Lemma \ref{aperto}, the fundamental group of $X$ is a quotient of the fundamental group of $Y$. Now, Lemma \ref{Z3} assures it is a quotient of the group $\Z^3$; to prove that such a quotient is trivial, it is sufficient to prove that it admits no non-trivial finite cyclic quotients. Geometrically, this means that $X$ admits no non-trivial (finite) cyclic  cover, which is the content of Lemma \ref{nociclici}, finishing the proof.
$\square$
\smallskip

An alternative proof of Theorem \ref{simplyconnected},   avoiding the use Lemmas \ref{Z3} and \ref{nociclici}, consists in noticing that the topology of such surfaces is independent of the particular choice of the lines $L_1,\ldots,L_4$ and the points $P_1,\ldots,P_3$. Then, it suffices to prove the Theorem for one single such surface; this can be done for instance via Lemma \ref{aperto}, by proving that each generator of the fundamental group of $Y=X\setminus (E_1\cup E_2\cup E_3)$ becomes homotopically trivial in $X$.

\medskip

Our last goal in this section is to conclude the proof of Theorem \ref{teounits}, classifying the possible infinite families of solutions to equation (\ref{sunits}).
This amounts to finding  the non-constant morphisms from $\Gm$ to open sets in the Hirzebruch surface considered in the context of parametric $S$-unit equations:
\medskip

\begin{theorem}\label{linesHirzebruch}
Let $f(T),g(T),h(T)\in k[T]$ be three polynomials of the same degree, pairwise coprime. Let $X\subset\Gm^2\times\A^1$ be the surface defined by equation (\ref{sunits}), where $(u,v)$ are coordinates in $\Gm^2$, and $t$ is the coordinate  in $\A^1$. Then every morphism $\A^1\to X$ is constant. Let $\Gm\ni x\mapsto (u(x),v(x),t(x))\in X$ be a morphism; then at least one among the three functions $u,v,u/v$ is constant. For every zero $t_0$ of $f(t)g(t)h(t)$, the fiber of $t_0$ in $X$ is a curve isomorphic to $\Gm$, so in particular there exists a non constant morphism $(u,v,t):\, \Gm\to X$ with $t(x)\equiv t_0$. 
\end{theorem}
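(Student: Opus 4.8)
The plan is to parametrise each type of morphism explicitly and then extract the constraints imposed by equation (\ref{sunits}) from the behaviour at the points missing from the source curve. Since $u$ and $v$ are invertible coordinates on the two factors $\Gm$, their pullbacks are invertible regular functions on the source; a morphism from $\A^1$ thus sends $u,v$ to units of $k[x]$, i.e.\ to nonzero constants, while a morphism from $\Gm$ sends them to units of $k[x,x^{-1}]$, i.e.\ to functions $\alpha x^m$ and $\beta x^n$ with $\alpha,\beta\in k^*$ and $m,n\in\Z$. In all cases $t$ pulls back to a regular function on the source, a polynomial or a Laurent polynomial in $x$.

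I would dispose of the $\A^1$ statement and the fibre statement first, as they are elementary. For a morphism from $\A^1$ one has $u\equiv u_0$, $v\equiv v_0$ constant, so (\ref{sunits}) forces $t(x)$ to be a root of the one-variable polynomial $u_0 f+v_0 g-h$; provided this polynomial does not vanish identically, $t$ takes only finitely many values and, being a morphism of the connected curve $\A^1$, is constant, so the whole map is constant. For the fibre statement, if $t_0$ is a zero of $fgh$ then pairwise coprimality shows exactly one of $f(t_0),g(t_0),h(t_0)$ vanishes; in each case the equation $f(t_0)u+g(t_0)v=h(t_0)$ cuts out in $\Gm^2$ a curve isomorphic to $\Gm$ (a line $v=\mathrm{const}$, a line $u=\mathrm{const}$, or the graph $v=\mu u$), and coprimality again guarantees that the constant involved is nonzero, so the line genuinely meets $\Gm^2$ in a $\Gm$.

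The heart of the matter is the $\Gm$ statement. Writing $u=\alpha x^m$, $v=\beta x^n$, I want one of $m$, $n$, $m-n$ to vanish, i.e.\ one of $u,v,u/v$ to be constant. The key device is to compare the orders of the three terms $\alpha x^m f(t)$, $\beta x^n g(t)$, $h(t)$ of (\ref{sunits}) at the two points $x=0,\infty$ that compactify $\Gm$ to $\Pp_1$. A non-constant Laurent polynomial $t$ has a pole at $0$ or at $\infty$; say $t\sim c\,x^q$ at $\infty$ with $q\geq 1$. Here the hypothesis that $f,g,h$ share the same degree $d$ is decisive: the three terms then have, at $\infty$, leading $x$-exponents exactly $m+qd$, $n+qd$ and $qd$, with nonzero leading coefficients (products of the leading coefficients of $f,g,h$ with $\alpha,\beta$ and $c^d$). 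Since the three terms sum to zero, the maximal exponent cannot occur just once, so two of $m+qd$, $n+qd$, $qd$ coincide; this is exactly $m=n$, $m=0$, or $n=0$. The case where the pole of $t$ is at $0$ is symmetric, using the valuation at $x=0$, and the case of constant $t\equiv t_0$ is separate: then (\ref{sunits}) becomes $\alpha f(t_0)x^m+\beta g(t_0)x^n=h(t_0)$, and if $m$, $n$, $m-n$ were all nonzero the linear independence of $1,x^m,x^n$ would force $f(t_0)=g(t_0)=h(t_0)=0$, against coprimality.

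The step I expect to be the main obstacle is precisely this order computation: one must check that the common-degree hypothesis makes the three leading exponents genuinely of the displayed size, so that the non-cancellation of the top exponent yields the clean trichotomy $m=0$, $n=0$, $m=n$ --- a disparity among the degrees of $f,g,h$ would be exactly what breaks it (and would be treated differently, as in Levin's case $\deg f+\deg g=\deg h$). I would also flag that the $\A^1$ statement needs the mild proviso that $h$ is not a combination $u_0f+v_0g$ with $u_0,v_0\in k^*$: in that degenerate case the horizontal line $x\mapsto(u_0,v_0,x)$ does lie on $X$, but it has both $u$ and $v$ constant and is therefore already subsumed in the classification of families with fixed $u$, $v$ or $u/v$.
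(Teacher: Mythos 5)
Your proposal is correct, and its core coincides with the paper's argument: in the decisive case of a morphism from $\Gm$ with non-constant $t$, both proofs compare orders at a pole of $t$ and exploit the equal-degree hypothesis to see that the three terms of (\ref{sunits}) have top exponents $m+qd$, $n+qd$, $qd$ with nonvanishing leading coefficients, so that the maximum must be attained twice, i.e.\ $m=n$, $m=0$ or $n=0$. The differences are organizational, and mostly in your favor. The paper first reduces, by dividing (\ref{sunits}) through by $u$ or $v$, to the situation where the pole of $t$ is not a pole of $u,v$, and then concludes $\min\{a,b\}=0$ by comparing pole orders; your direct three-exponent comparison skips this reduction entirely --- a small gain, since the reduction as written needs the extra precision that one divides by whichever of $u,v$ has the \emph{larger} pole at $p$ (dividing by $u$ when $v$ has the deeper pole leaves $v/u$ with a pole at $p$). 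For constant $t$ the paper argues geometrically: the fiber is $\Pp_1$ minus three points, hence admits no non-constant map from $\Gm$, unless $f(t_0)g(t_0)h(t_0)=0$, in which case the fiber is a copy of $\Gm$ along which $u$, $v$ or $u/v$ is literally constant; your monomial-independence argument proves the same dichotomy purely algebraically, and your treatment of the fiber statement matches the paper's. Finally, your closing flag is a genuine catch rather than a pedantic proviso: the paper's proof nowhere addresses the $\A^1$ claim, and as stated that claim is false in the degenerate case $h=u_0f+v_0g$ with $u_0,v_0\in k^*$ (for instance $f=T$, $g=T+1$, $h=2T+1$, $u_0=v_0=1$), where $x\mapsto(u_0,v_0,x)$ is a non-constant morphism $\A^1\to X$; as you observe, such families have both $u$ and $v$ constant, so the classification actually invoked in the proof of Theorem \ref{teounits} is unaffected.
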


\noindent{\it Proof}. Let $\Gm\ni x\mapsto (u(x),v(x),t(x))\in X$ be a non constant morphism. Suppose  first that the regular function $t:\Gm\rightarrow\A^1$ is constant, equal to $t_0$. Then the curve in $\Gm^2$ defined by the equation $f(t_0)u+g(t_0)v=h(t_0)$ has Euler characteristic $1$ (it is isomorphic to $\Pp_1\setminus\{0,1,\infty\}$) unless $f(t_0)g(t_0)h(t_0)=0$, in which case it is isomorphic to $\Gm$ (so its Euler characteristic vanishes). In the first case, the morphism would be constant; in the second case, if  $f(t_0)=0$, then by hypothesis $g(t_0)\neq 0\neq h(t_0)$, so necessarily $v(x)\equiv -h(t_0)/g(t_0)$ would be constant; if $g(t_0)=0$, then $u$ would be constant, for the same reason; finally, if $h(t_0)=0$, then $u(x)/v(x)$ would constant, be equal to $-g(t_0)/f(t_0)$. Consider now the case that the function $t$ is non constant. It might have one or two poles. Let $p \in\{0,\infty\}$ be a pole of $t$; if $p$ is a pole of $u$ or $v$, after dividing both sides in (\ref{sunits}) by $u$ (resp. $v$), we obtain another equation of the same type, namely $f(t)+g(t)(v/u)=h(t)(1/v)$ (or $f(t)(u/v)+g(t)=h(t)(1/v)$), where the new functions $(v/u), (1/u)$ (or $(u/v),(1/v)$) would have no pole in $p$. Hence we are reduced to the case where one pole of $t$, say $\infty$, is not a pole of $u,v$; if we prove that in this particular case $u$ or $v$ is constant, we would have proved in general that one among $u,v,u/v$ is so. Then, suppose that $u(x)=\lambda x^{-a}, v(x)=\mu x^{-b}$, for $\lambda,\mu\in k^*$, $a\geq 0$, $b\geq 0$.  Then the equation becomes
\begin{equation*}
 \frac{\lambda f(t(x))}{ x^a} + \frac{\mu g(t(x))}{ x^b} = h(t(x)).
\end{equation*}
Comparing the order of pole at infinity, we deduce that $\min\{a,b\}=0$, as wanted.
\qed

\smallskip

We note that, for $d=\deg(f)=\deg(g)=\deg(h)=1$, there always exists a non constant morphism $\Gm\to X$, with a non constant $t$, i.e. which remains non constant under composition with the canonical projection $X\to\A^1$. In fact, after change of variable in $t$ (i.e. composition with an automorphism of $\A^1$) equation (\ref{sunits}) with $d=1$ takes the form 
\begin{equation*}
tu+(1-t)v=t+a
\end{equation*}
for a suitable $a\in k^*$. Then, the map $\Gm\ni x\mapsto (u,v,t)=(x,x,x-a)$ is a non constant morphism. For $d\geq 2$ and generic choice of the polynomials $f(t),g(t),h(t)$, the image of every morphism $\Gm\to X$ will be contained in a fibre $t=t_0$ for the projection $X\to\A^1$; for special choices of $f(t),g(t),h(t)$ there will be exactly one exception. 

\section{Density of integral points on certain surfaces}\label{controesempi}

In this paragraph, $k$ will again denote a number field. Our aim is to prove that some of the main results are in a sense best possible, by proving that certain surfaces posses Zariski-dense sets of integral points.  

The main ideas and tools in this section originated in the paper \cite{beukers}, and were vastly generalized by Hassett and Tschinkel in \cite{ht}. In particular, we shall repeatedly use the following lemma, which is a particular case of   Theorem 2.3 of \cite{beukers}:

\begin{lemma}\label{friz}
 Let $k$ be a number field, $\OS\subset k$ be a ring of $S$-integers such that the group $\OS^*$ is infinite. Let $\cc\subset\Pp_2$ a smooth conic defined over $k$, $A,B\in\cc(\bar{k})$ be rational points on $\cc$ (possibly $A=B$), such that  the divisor $A+B$ on $\cc$ is defined over $k$. Then the set of $S$-integral points on the open curve $\cc\setminus\{A,B\}$ is either empty or infinite.
\end{lemma}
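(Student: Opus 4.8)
The plan is to exhibit a one-parameter algebraic group acting simply transitively on the affine curve $C^\circ:=\cc\setminus\{A,B\}$ and preserving $S$-integrality, so that a single integral point generates an infinite orbit. Over $\bar k$ the smooth conic $\cc$ is isomorphic to $\Pp_1$, and $C^\circ$ is isomorphic to $\Gm$ (if $A\neq B$) or to $\A^1$ (if $A=B$). First I would introduce the subgroup $G\subset\mathrm{Aut}(\cc)\cong\mathrm{PGL}_2$ consisting of those automorphisms that fix the subscheme $A+B$ pointwise (equivalently, that fix the point and the tangent direction at $A$ to order two when $A=B$). Since by hypothesis the divisor $A+B$ is defined over $k$, the pair $\{A,B\}$ is stable under $\mathrm{Gal}(\bar k/k)$, so $G$ is defined over $k$; it is one-dimensional, isomorphic to ${\mathbb G}_a$ when $A=B$ and to a one-dimensional torus $T$ when $A\neq B$. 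In all cases $G$ acts simply transitively on $C^\circ$, making $C^\circ$ a torsor under $G$.

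The key integrality step is the following. If $C^\circ(\OS)=\emptyset$ we are done, so fix an $S$-integral point $P_0$. For any $g\in G(\OS)$ and any place $\nu\notin S$, the element $g$ reduces modulo $\nu$ to an automorphism $\bar g$ of the reduced conic that fixes the reduced divisor $(A+B)_\nu$; being an automorphism, $\bar g$ therefore permutes the complement $C^\circ$ modulo $\nu$. Since $P_0$ reduces into this complement, so does $g\cdot P_0$, for \emph{every} $\nu\notin S$ simultaneously; hence $g\cdot P_0\in C^\circ(\OS)$. Thus $G(\OS)\cdot P_0\subseteq C^\circ(\OS)$, and as the action is simply transitive the orbit map $g\mapsto g\cdot P_0$ is injective. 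It then remains only to show that $G(\OS)$ is infinite.

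Finally I would compute $G(\OS)$ case by case. When $A=B$ one has $G\cong{\mathbb G}_a$ over $k$, so $G(\OS)\cong(\OS,+)$, which is infinite. When $A$ and $B$ are distinct $k$-rational points, $T$ is the split torus and $G(\OS)\cong\OS^*$, infinite precisely by the standing hypothesis that $\OS^*$ is infinite; the injective orbit map then produces infinitely many integral points. The delicate case, which I expect to be the main obstacle, is when $A,B$ are conjugate over a quadratic extension $L/k$: then $T=R^1_{L/k}\Gm$ is a non-split norm-one torus, and $T(\OS)=\ker\!\big(N_{L/k}\colon\OS_{S_L}^*\to\OS_S^*\big)$ has rank $|S_L|-|S|$. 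Its infiniteness is therefore \emph{not} automatic from $\OS^*$ being infinite; it requires that at least one place of $S$ split in $L$, and this arithmetic of the norm-one torus is exactly where the hypotheses must be weighed most carefully (in the applications of Section \ref{controesempi} the two relevant points are $k$-rational, so the split computation above already suffices). I would thus isolate the infiniteness of $T(\OS)$ as the genuine crux, reducing the statement to it; this is the arithmetic content underlying Beukers' Theorem 2.3, of which the present lemma is the quoted special case.
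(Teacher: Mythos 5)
The first thing to note is that the paper contains no proof of Lemma \ref{friz} at all: it is quoted as a special case of Theorem 2.3 of \cite{beukers}, and the lone $\square$ is the entire ``proof''. So your argument can only be measured against the statement itself, and against how the lemma is used later in section \ref{controesempi}. Your torsor argument is the standard one, and it is essentially complete in the two split cases: for $A=B$ the group is the additive group with $G(\OS)\cong\OS$ infinite, and for $A\neq B$ with $A,B\in\cc(k)$ it is the split $\Gm$ with $G(\OS)\cong\OS^*$ infinite by hypothesis. One technical point you gloss over: the claim that every $g\in G(\OS)$ preserves $S$-integrality via ``the reduction of $g$ is an automorphism fixing the reduced divisor'' is valid only at places of good reduction of the whole configuration, and since $S$ is \emph{fixed} in the statement you cannot enlarge it to absorb the finitely many bad places outside $S$. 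This is repaired by passing to the finite-index congruence subgroup of $G(\OS)$ of elements reducing to the identity modulo a suitable modulus supported at the bad places: finite index preserves infiniteness, and such elements do not change the reductions of $P_0$ at those places.

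The serious issue is the one you isolated yourself, and your caution there is not a removable technicality --- it is decisive, because the lemma \emph{as printed is false} in the non-split case. Take $k=\mathbb{Q}$, $S=\{2,\infty\}$, so $\OS=\Z[1/2]$ and $\OS^*$ is infinite; take $\cc:\; x^2+y^2=z^2$ and $A,B=(1:\pm i:0)$, so that $A+B=\cc\cap\{z=0\}$ is defined over $\mathbb{Q}$ while $L=\mathbb{Q}(i)$, in which neither place of $S$ splits, whence your $T(\OS)$ has rank $|S_L|-|S|=0$. A point $(a:b:c)\in\cc(\mathbb{Q})$ in coprime integer coordinates is $S$-integral precisely when no odd prime divides $c$, i.e. $c=\pm 2^m$; but $a^2+b^2\equiv 0 \pmod 4$ forces $a,b$ both even, so coprimality gives $m=0$ and the full set of $S$-integral points is $\{(\pm 1:0:1),(0:\pm 1:1)\}$ --- non-empty and finite. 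So no argument can close the gap you flagged: the hypothesis ``$\OS^*$ infinite'' must be replaced by (or supplemented with) the condition that some place of $S$ splits in $L=k(A)$, equivalently that the norm-one group $T(\OS)$ is infinite; this is automatic if $A,B\in\cc(k)$, or if $k$ has a complex place, but not in general. Note finally that your parenthetical remark --- that only $k$-rational $A,B$ occur in the applications --- is accurate for Theorem \ref{5} and Corollary \ref{ctforme}, but \emph{not} for the first case of the proof of Theorem \ref{ctex1}, where $A+B$ is the intersection of a conic with a line through none of its base points; there the conjugate case genuinely arises, and the split-place condition must be secured by hand (which is possible, since that theorem permits enlarging $k$ and $S$, e.g.\ so that the two intersection points become rational or split at a place of $S$).
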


\hfill $\square$
\medskip

The idea of the proofs of all results in this section is to produce a conic fibration on the relevant surfaces, and apply Lemma \ref{friz} to prove that infinitely many conics in such fibrations possess infinitely many integral points. This is the same idea of Hassett and Tschinkel.

We begin by proving that the condition $d\geq 6$ in Theorem \ref{ennagono} is really ne\-ces\-sary. In fact, the next result proves that the conclusion of Theorem \ref{ennagono} fails if $d=5$; the case $d\leq 4$ is easier and its proof will be left to the reader.

\smallskip
 
\begin{theorem}\label{5}
 Let $L_1,\ldots,L_5$ be lines on $\Pp_2$, defined over $k$, in general position. Let, for $i\in\Z/5\Z$, $P_i$ be the point of intersection $L_i\cap L_{i+1}$. Let $\tilde{X}\rightarrow\Pp_2$ be the blow-up of the plane over $P_1,\ldots,P_5$ and denote by $\hat{L}_i$ the strict transforms of the lines $L_i$. Then for a suitable finite set $S$ of places of $k$, the integral points on the surface $\tilde{X}\setminus(\hat{L_1}\cup\ldots\cup\hat{L}_5)$ are Zariski-dense.
\end{theorem}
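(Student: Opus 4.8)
The plan is to exhibit a conic-bundle structure on $\tilde{X}$ and to apply Lemma \ref{friz} both to its fibres and to a suitable section, following the method of Beukers and of Hassett--Tschinkel recalled above. Write $P_i=L_i\cap L_{i+1}$ for $i\in\Z/5\Z$, so that each line $L_i$ passes through exactly the two blown-up points $P_{i-1}$ and $P_i$, whence $\hat{L}_i^2=-1$. I would consider the pencil $\mathcal{P}$ of conics through the four points $P_1,P_2,P_3,P_4$ (omitting $P_5$); after the blow-up this defines a morphism $p:\tilde{X}\to\Pp_1$ whose general fibre is the strict transform $\hat{\mathcal{C}}$ of a smooth conic $\mathcal{C}\in\mathcal{P}$. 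Writing $\ell$ for the line class and $e_j$ for the class of the exceptional divisor over $P_j$, one has $\hat{L}_1=\ell-e_5-e_1$, $\hat{L}_5=\ell-e_4-e_5$, and fibre class $F=2\ell-e_1-e_2-e_3-e_4$; a direct computation gives $F.\hat{L}_1=F.\hat{L}_5=1$ and $F.\hat{L}_i=0$ for $i=2,3,4$. Hence the general fibre meets the boundary $D=\hat{L}_1+\dots+\hat{L}_5$ in exactly two points, one on $\hat{L}_1$ and one on $\hat{L}_5$ (the residual intersections of $\mathcal{C}$ with the $k$-rational lines $L_1,L_5$, hence themselves $k$-rational), so that $\hat{\mathcal{C}}\cap X\cong\Gm$. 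Thus Lemma \ref{friz} will apply to any fibre carrying a single $S$-integral point and produce infinitely many of them.

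The crux is therefore to guarantee that \emph{infinitely many} fibres of $p$ carry an $S$-integral point, and I would resolve this by using a second pencil $\mathcal{P}'$, namely the conics through $P_1,P_3,P_4,P_5$ (omitting $P_2$), of fibre class $F'=2\ell-e_1-e_3-e_4-e_5$. Since $\mathcal{P}$ and $\mathcal{P}'$ share exactly the three base points $P_1,P_3,P_4$, one finds $F.F'=4-3=1$, so a general member $\hat{\mathcal{C}}'$ of $\mathcal{P}'$ is a \emph{section} of $p$; moreover $F'.\hat{L}_2=F'.\hat{L}_3=1$ and $F'.\hat{L}_i=0$ otherwise, so $\hat{\mathcal{C}}'\cap X\cong\Gm$ as well. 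Choosing $\mathcal{C}'$ to be the member of $\mathcal{P}'$ through an auxiliary $k$-rational point $R_0\in X$ ensures that $\mathcal{C}'$ is defined over $k$ and has a $k$-point, hence $\cong_k\Pp_1$ with infinitely many $k$-points. Enlarging $S$ by the finitely many places at which $R_0$ reduces into $D$, and taking $S$ with $|S|\ge 2$ so that $\OS^*$ is infinite, makes $R_0$ an $S$-integral point of $\hat{\mathcal{C}}'\cap X$; Lemma \ref{friz} applied to the section $\hat{\mathcal{C}}'$ then yields infinitely many $S$-integral points on it.

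Because $\hat{\mathcal{C}}'$ is a section, $p$ restricts to an isomorphism $\hat{\mathcal{C}}'\xrightarrow{\ \sim\ }\Pp_1$, so these infinitely many integral points lie on infinitely many \emph{distinct} fibres of $p$. Each such fibre, being the member of $\mathcal{P}$ through a $k$-rational point, is a smooth $k$-rational conic meeting $D$ in two $k$-rational points and now carrying an $S$-integral point; a second application of Lemma \ref{friz} makes each of them carry infinitely many. The resulting set of $S$-integral points of $X$ thus meets infinitely many distinct irreducible curves in infinitely many points each, so it cannot be contained in any finite union of curves and is therefore Zariski-dense. The only genuine subtlety is the reduction to Lemma \ref{friz}: one must verify that the two fibrations are really cut out by pencils through $4$-element subsets of $\{P_1,\dots,P_5\}$, that the corresponding strict transforms meet $D$ in \emph{exactly} two points (the intersection bookkeeping above), and that at least one seed integral point exists, this last being arranged by the harmless finite enlargement of $S$. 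The remaining checks---smoothness of the generic member, discarding the finitely many degenerate fibres, and $k$-rationality of the boundary points---are routine.
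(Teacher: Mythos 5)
Your architecture --- the conic pencil through $P_1,\dots,P_4$ giving a fibration $p:\tilde{X}\to\Pp_1$ whose general fibre meets $D$ in exactly two rational points, plus Lemma \ref{friz} to propagate a single integral point on a fibre to infinitely many --- is exactly the paper's method, and your intersection bookkeeping ($F.\hat{L}_1=F.\hat{L}_5=1$, $F.\hat{L}_i=0$ otherwise, $F.F'=1$) is correct. Where you diverge is the mechanism for seeding infinitely many fibres: you introduce a section coming from a second pencil, whereas the paper observes that no section is needed, because the base point $P_2$ lies on \emph{every} member of $\mathcal{P}$ and, by good reduction of the line configuration, is automatically integral with respect to the two boundary points of each member. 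The paper then gets infinitely many usable fibres by parametrizing them by the $S$-integral points of $\mathcal{P}\setminus\{\text{two degenerate members}\}\simeq\Gm$, infinite once $\OS^*$ is; this parametrization is not a convenience but does real work, as the next point shows.

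The genuine gap is in your second application of Lemma \ref{friz}. The lemma produces infinitely many points of the fibre conic $\mathcal{C}_t$ integral with respect to the divisor $A+B$ \emph{on the curve}; you then treat these as $S$-integral points of $X$. The two notions coincide only if, at every place $\nu\notin S$, the reduction of $\mathcal{C}_t$ meets the reduction of $D$ only at reductions of $A$, $B$ and of the base points (with the right non-tangency there). This fails exactly when $\mathcal{C}_t$ reduces modulo some $\nu\notin S$ to a degenerate member of $\mathcal{P}$ having a component inside the boundary: the dangerous member is $L_3+M$, where $M=L(P_1,P_4)$, which has exactly one component ($L_3$) in $D$; a point of $\mathcal{C}_t$ integral with respect to $A+B$ may then reduce anywhere into $L_{3,\nu}$, so it is not an integral point of $X$. (The member $L_2+L_4$ is harmless for a different reason: both components lie in $D$, so a fibre through an $X$-integral point cannot reduce to it; the member consisting of the two diagonals has no component in $D$ and causes no trouble.) Since $S$ is fixed once and for all while your fibres vary, these fibre-dependent bad places cannot be absorbed into $S$, so Lemma \ref{friz} alone does not yield $X$-integral points on the selected fibres; this is precisely what the paper's restriction to $\Gm$-parametrized members handles. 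Your argument is repairable: since $F'.\hat{M}=0$ while $F'.\hat{L}_3=1$, your section meets the fibre $\hat{L}_3+\hat{M}$ only on $\hat{L}_3$, so after a one-time enlargement of $S$ (good reduction of the configuration, of the fixed section, and of the blow-up model) an $X$-integral point $R$ on the section can never reduce into that fibre; hence $p(R)$ avoids both bad degenerate parameters modulo every $\nu\notin S$ and the transfer goes through on every fibre you select. But this is a necessary supplementary argument, not one of the routine checks you list; the same (easier, because one-time) verification is also missing for your first application of Lemma \ref{friz}, since enlarging $S$ only at the places where $R_0$ reduces into $D$ does not yet make curve-integral points of $\hat{\mathcal{C}}'$ into $X$-integral ones.
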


\begin{proof}
We shall actually prove a little more, namely the density of integral points on $\tilde{X}$ with respect to the divisor $\hat{L}_1+\ldots+\hat{L}_5+E_5$, where $E_5$ is the exceptional divisor over $P_5$ (in other words: we do not need to blow up the points $P_5$). First of all, let us enlarge the set $S$ (if necessary) so that the group of units $\OS^*$  is infinite and the configuration of lines $L_1+\ldots+L_5$ has good reduction outside $S$. By this, we mean that their reductions modulo every palce outside $S$ are still in general position. Denote by $M$ the line joining $P_1$ to $P_4$.

\noindent Consider the pencil $\Pp$ of conics passing through $P_1,\ldots,P_4$. It is parametrised by the projective line $\Pp_1$; those conics which do not reduce (modulo any place outside $S$) to the reducible conic  $\cc_1:=L_1+L_4$ nor to the conic $\cc_2:=L_2+M$ are parametrised by $\Pp\setminus\{\cc_1,\cc_2\}\simeq\Gm$; in particular, their exist infinitely many such conics. Let $\Pp^\prime\subset\Pp(k)$ be this  set of conics.  For every such conic $\cc\in\Pp^\prime$, let $A_\cc, B_\cc$ be the second point of intersection with $L_5$ and $L_4$ respectively (they might coincide, in which case $A_\cc=B_\cc=P_4$; also, we might have $A_\cc=P_5$ or $B_\cc=P_3$, in case $\cc$ is tangent to $L_5$ at $P_5$ or to $L_4$ at $P_4$).  We pause to prove that all the   integral points on $\cc$ with respect to $A_\cc,B_\cc$ give rise to integral points on $\tilde{X}$ with respect to $ $.  Hence, for every such conic $\cc$ the point $P_2$ is integral with respect to $\hat{L}_1+\ldots+\hat{L}_5+E_5$. In fact, let $Q$ be a point on $\cc$ which is integral with respect to $A_\cc, B_\cc$, and let $\hat{Q}$ be its corresponding point on $\tilde{X}$. We have to prove that $\hat{Q}$  does not reduce to  $\hat{L}_1+\ldots+\hat{L}_5+E_5$. 
For this purpose, we  note that no element $\cc\in\Pp^\prime$  is tangent to $L_2$ or $L_3$ at the point $P_2$, nor becomes tangent after reducing modulo any place outside $S$;  it is not tangent (neither reduces to a tangent one) to $L_1$ (at $P_1$ or $P_5$) nor to $L_3$ (at $P_3$ or $P_4$). Then $\hat{Q}$ cannot reduce to $\hat{L}_1+\ldots\hat{L}_4$; the integrality of the point $Q$ with respect to $A_\cc,B_\cc$ just means that it $\hat{Q}$ does not reduce to $\hat{L}_4+\hat{L}_5+E_5$. Hence, to prove Theorem \ref{5} it is sufficent to prove that such conics have infinitely many integral points. By Lemma \ref{friz}, it is sufficient to find one integral points. Now, by our assumptions on $S$, the point $P_2$ is  integral with respect to $L_4+L_5$, so in particular with respect to $A_\cc+B_\cc$, concluding the proof.
\end{proof}

\begin{corollary}\label{ctforme}
 Let $F_1,\ldots,F_5$ be linear forms in three variables, in general position, defined over $k$. There exists a non degenerate quadratic form $G$, defined over $k$, and a finite set $S$ of places of $k$ such that the points $(x,y,z)\in\OS^3$ with $F_i(x,y,z)|G(x,y,z)$ are Zariski-dense in $\Pp_2$.  
\end{corollary}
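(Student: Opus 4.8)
The plan is to realize the simultaneous divisibility conditions $F_i\mid G$ as an \emph{integrality} condition on precisely the blown-up surface of Theorem \ref{5}, and then invoke that density result. The governing observation is that here $\deg G=2>1=\deg F_i$, so the degree hypothesis of Theorem \ref{forme} is violated; we are therefore in a regime where density, rather than degeneracy, is allowed, and our task is to build the dividing form $G$ so that the correspondence of Lemma \ref{riduzione} lands exactly on the configuration of Theorem \ref{5}.

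First I would set $L_i=\{F_i=0\}$ and $P_i:=L_i\cap L_{i+1}$ for $i\in\Z/5\Z$, the five consecutive intersection points, and take $G$ to be an equation of the conic $\cc$ through $P_1,\ldots,P_5$. Five points determine a conic, and the point to verify is that this conic is \emph{nondegenerate}, i.e. that no three of the $P_i$ are collinear. This follows from the general position of the lines together with a short combinatorial remark: $P_i$ and $P_j$ share a common line exactly when they are cyclically adjacent, and in a $5$-cycle every triple of points contains an adjacent pair. So given any three of the $P_i$, two of them, say $P_i,P_{i+1}$, lie on a common line $L_{i+1}$, which is then the line they span; the third point $P_k$ would lie on $L_{i+1}$ only if $L_{i+1},L_k,L_{k+1}$ were concurrent (three distinct lines for $k\neq i,i+1$), contradicting general position. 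Hence no three $P_i$ are collinear and $\cc=\{G=0\}$ is smooth.

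Next, since $L_i$ passes through $P_{i-1}$ and $P_i$, both lying on $\cc$, and a line meets a conic in two points, I obtain $L_i\cap\cc=\{P_{i-1},P_i\}$ with the two points distinct; as $\cc$ is smooth, each such intersection is transverse. Following the construction in the proof of Theorem \ref{forme}, I would blow up all intersection points of the lines with $\cc$; by the above these are exactly the five points $P_1,\ldots,P_5$, each contributed by two of the lines. The resulting surface $\tilde{X}$ is precisely the blow-up of $\Pp_2$ along $P_1,\ldots,P_5$ studied in Theorem \ref{5}, with $D_i$ equal to the strict transform $\hat{L}_i$. After enlarging $S$ so that $\OS^*$ is infinite and the hypotheses of Lemma \ref{riduzione} hold at each $P_i$, Lemma \ref{riduzione} with $\varphi=F_i$, $\psi=G$ translates the simultaneous divisibilities $F_i(x,y,z)\mid G(x,y,z)$, $i\in\Z/5\Z$, into the $S$-integrality of the corresponding point of $\tilde{X}$ with respect to $\hat{L}_1+\cdots+\hat{L}_5$.

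Finally I would apply Theorem \ref{5}, which (after a further finite enlargement of $S$) provides a Zariski-dense set of $S$-integral points on $\tilde{X}\setminus(\hat{L}_1\cup\cdots\cup\hat{L}_5)$; pushing these down through the blow-up map yields a Zariski-dense set of solutions $(x,y,z)\in\OS^3$ to $F_i\mid G$, as required. I expect the main obstacle to lie not in the density step, which is handed to us by Theorem \ref{5}, but in the two intermediate steps: confirming nondegeneracy of $\cc$ via the collinearity analysis, and matching the blow-up and the reduction conditions of Lemma \ref{riduzione} to the situation of Theorem \ref{5} exactly, so that the divisibility encodes integrality with respect to the strict transforms $\hat{L}_i$ and not with respect to some larger divisor (in particular not forcing integrality along the exceptional curves, which Theorem \ref{5} explicitly does not require at $P_5$).
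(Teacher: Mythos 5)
Your proposal is correct and follows essentially the same route as the paper: take $G$ to be the equation of the (unique, smooth) conic through the five consecutive intersection points $P_i=L_i\cap L_{i+1}$, identify the blow-up at $P_1,\ldots,P_5$ with the surface of Theorem \ref{5} via Lemma \ref{riduzione}, and conclude density from that theorem. The paper's own proof is only a few lines and omits the verifications you supply (non-degeneracy of the conic, transversality of $L_i\cap\cc$, and the exact matching of the divisibility conditions with integrality along the strict transforms $\hat{L}_i$ only), so your write-up is a faithful, more detailed version of the same argument.
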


 \begin{proof}
  Putting, as above $P_i=F_i^{-1}(0)\cap F_{i+1}^{-1}(0)$, for $i\in\Z/5\Z$, let $\cc$ be the (unique smooth) conic passing through $P_1,\ldots,P_5$. Let $G=0$ be a homogeneous equation for $\cc$. Define again $X$ to be the complement of the strict transform of the lines $F_i=0$ in the blow-up of the plane over $P_1,\ldots,P_5$. The integral points on $X$ correspond to the solution of the above divisibility problem.
Now, by Theorem \ref{5}, such integral points are Zariski-dense, under a suitable ring extension of $\OS$.
 \end{proof}

From the general position assumption, the points $(x,y,z)\in\OS^3$ with coprime coordinates and satisfying  $F_i(x,y,z)|G(x,y,z)$, will also satisfy
\begin{equation*}
 F_1(x,y,z)\cdots F_5(x,y,z)|G^2(x,y,z). 
\end{equation*}

So, after the above Corollary, we can obtain a dense set of integral points where a degree-five homogeneous form divides a form of degree four.
\medskip

We now turn our attention to cubic surfaces again; we want to prove that Theorem 1 is best possible; more precisely, we prove Theorem \ref{ctex1}, asserting that: (1) if we remove just five lines on two hyperplane sections from a cubic surfaces, the integral points are Zariski-dense (in a suitable ring of $S$-integers); (2) one can always remove nine lines in such a way that the integral points on the complement are still dense.

\noindent{\it Proof of Theorem \ref{ctex1}}. Let us begin by the first assertion. We recall from the preceeding section that a cubic surface can always be realized as the blown-up of six points $P_1,\ldots,P_6$ in general position on the plane. Up to enlarging $k$ and $S$ we can suppose they are all defined over $k$ and they remain in general position when reduced modulo every place outside $S$. The twenty-seven lines correspond to the six blown-up points, to the fifteen lines joining two of them, to the six conics passing through five of them. Suppose we are given five lines on two hyperplane sections: then either the two hyperplane sections share a common line, or the union of the two hyperplane sections consists of six lines (and we are considering just five of them). 

Let us consider te first case, where a line is common to the two hyperplane sections. We can realize the first hyperplane section as the image of the lines $L(P_1,P_2), L(P_3,P_4), L(P_5,P_6)$; the second one as the image of the lines $L(P_3,P_4)$ (in common with the preceding hyperplane section), $L(P_2,P_6)$, $L(P_1,P_5)$. Then consider the pencil $\Lambda$ of conics through $P_1,P_2,P_5,P_6$; those which do not reduce, modulo any place outside $S$, to the singular conic $\cc_1:=L(P_1,P_2)+L(P_5,P_6)$ nor to $\cc_2:=L(P_2,P_6)+L(P_1,P_5)$ correspond to the integral points on $\Lambda\setminus\{\cc_1,\cc_2\}\simeq\Gm$, so there are infinitely many of them. Fix one such  conic $\cc$; its image on the cubic surface intersects the union of the two hyperplane sections on the (image of the) line $L(P_3,P_4)$, so on a divisor of the form $A+B$, consisting on one or two points ($A$ might coincide with $B$). Each such conic posses at least one integral point with respect to this divisor, namely $P_1$\footnote{note, however, that the point $P_1$ on $\Pp_2$ corresponds to a line on the cubic surface; this line intersects the image of $\cc$ on one point, corresponding on the tangent direction of $\cc$ at $P_1$; this direction cannot be the direction of the line $L(P_1,P_2)$, nor can reduce to this modulo any place outside $S$}. Then, by Lemma \ref{friz}, $\cc$ contains infinitely many points which are integral with respect to $A+B$. We then obtain infinitely many curves on the surface, each admitting infinitely many integral points, thus providing a Zariski-dense infinite set as wanted.

In the second case, we argue in a very similar way. we can suppose, as before, that the first hyperplane section is given by the lines $L(P_1,P_2), L(P_3,P_4), L(P_5,P_6)$;  let the  second one be defined by $L(P_2,P_3), L(P_4,P_5)$, $L(P_6,P_7)$. Consider the complement of the first five lines, i.e. all the above but $L(P_5,P_6)$; we claim that, even after removing the two lines corresponding to the blown-up points $P_1,P_6$, the integral points are Zariski-dense. To prove this, consider the pencil $\Lambda$ of conics passing through $P_2,\ldots,P_4$; for every such conic $\cc\in\Lambda$, let $A+P_2$ be its intersection with   $L(P_1,P_2)$, $B+P_5$ its intersection with $L(P_5,P_6)$; we do not exclude that $A=B$, nor $A=P_2$, nor $B=P_5$. Integral points on $\cc\setminus\{A,B\}$ give rise to integral points on the surface we are interested in. Note that $\Lambda\simeq\Pp_1$, and the conics in $\Lambda(k)$ which do not reduce to $L(P_2,P_3)+L(P_4,P_5)$ nor to $L(P_3,P_4)+L(P_2,P_5)$ modulo any place outside $S$ correspond to rational points on the line $\Lambda$ which are integral with respect to two points, i.e. to integral points on $\Gm$, hence they are Zariski-dense, over a suitable ring of $S$-integers. By Lemma \ref{friz}, each such conic   contains infinitely many integral points, with respect to the divisor $A+B$,  since it contains the point   the integral $P_3$ (or $P_4$). Then the integral points on the surface are dense, so this case is settled too.
 
Let us now prove the second sentence, namely that one can sometimes remove up to nine lines, and still obtain a Zariski dense set of integral points. Again, consider the plane blown up at six points $P_1,\ldots,P_6$ in general position. The three lines $L(P_1,P_2), L(P_3,P_4), L(P_5,P_6)$, as mentioned, give rise to corresponding lines on the cubic surface; adding the six lines which correspond to the blown-up points, we obtain a configuration of nine lines whose complement is isomorphic to the complement of the mentioned three lines on the plane. This complement is then isomorphic to $\Gm^2$, whose instegral points are indeed Zariski-dense, over a suitable ring of $S$-integers, concluding the proof.
\qed
\medskip

{\bf Concluding remarks}. As mentioned, in  \S \ref{dimostrazioni} we omitted the proofs in the Nevanlinna theoretic setting, since they are essentially the same as in the arithmetic case, up to the replacement of Schmidt's Subspace Theorem by Cartan's Second Main Theorem. 
In the present section, however, as far as density is concerned, we can go further   then the exact analogue of the arithmetic case:  one can prove that the open surfaces considered in Theorems \ref{ctex1}, \ref{ctforme}, not only contain Zariski-dense entire curves, but are even holomorphically dominated by $\C^2$. This last fact could be proved by combining the above technique of producing  conic fibrations with recent work of Buzzard and Lu \cite{bl}.

\bigskip
{\bf Acknowledgments}. Part of this paper was written during the thematic program ``Arithmetic Geometry, Hyperbolic Geometry and related topics'' held at Fields Institute in Toronto, in the fall 2008. We thank the organizers  and the Fields Institute for the invitation and for financial support.

\vfill

Pietro Corvaja\hfill Umberto Zannier

Dip. di Matematica e Informatica\hfill  Scuola Normale Superiore

Via delle Scienze\hfill Piazza dei Cavalieri, 7

33100 - Udine (ITALY)\hfill  56100 Pisa (ITALY)

pietro.corvaja@dimi.uniud.it\hfill u.zannier@sns.it

\end{document}